\newtheorem{theorem}{Theorem}[section]
\newtheorem{lemma}[theorem]{Lemma}
\newtheorem{proposition}[theorem]{Proposition}
\newtheorem{corollary}[theorem]{Corollary}
\newtheorem{claim}[theorem]{Claim}
\theoremstyle{definition}
\newtheorem{example}[theorem]{Example}
\newtheorem{nonexample}[theorem]{Non-example}
\newtheorem{definition}[theorem]{Definition}
\newtheorem{question}[theorem]{Question}
\numberwithin{equation}{section}
\newcommand{\Ends}{{\rm Ends}}
\newcommand{\Vol}{{\rm Vol}}
\newcommand{\Map}{{\rm Map}}
\newcommand{\intt}{{\rm{Int}}}
\newcommand{\diam}{{\rm diam}}
\newcommand{\Area}{{\rm Area}}
\newcounter{mcomments}
\title[Volumes of end-periodic mapping tori]{Volumes of end-periodic mapping tori}
\author[M. Loving]{Marissa Loving}
\date{\today}
\begin{document}


\begin{abstract}
    In this expository paper, we provide an intuition and illustration-driven overview of two recent results (the main theorems of \cite{EndPeriodic1} and \cite{EndPeriodic2}) that tie the dynamics of certain homeomorphisms of infinite-type surfaces, called \emph{end-periodic homeomorphisms}, to the geometry of their associated (compactified) mapping tori. These results are analogues of a theorem of Brock in the finite-type setting for mapping tori of pseudo-Anosov homeomorphisms \cite{Brock-mappingtorus-vol}. 
\end{abstract}

\maketitle

\setcounter{tocdepth}{1}
\tableofcontents

\section{Preface}

This expository paper builds on lecture notes from a minicourse entitled, \emph{Infinite-type surfaces, end-periodic homeomorphisms, and the geometry of 3-manifolds}, which was given by the author as part of the 2023 Riverside Workshop on Geometric Group Theory. That minicourse and this work both focus primarily on the author's recent results on the volumes of end-periodic mapping tori, which are joint with Elizabeth Field, Heejoung Kim, and Christopher Leininger \cite{EndPeriodic1} and Elizabeth Field, Autumn Kent, and Christopher Leininger \cite{EndPeriodic2}. We will lean much more heavily on intuition and illustrations than is typical in mathematical writing with an effort to highlight key ideas and provide helpful examples rather than to fully flesh out proofs that can already be found in complete detail in the relevant source material. 

We begin in \Cref{sec:intro} with an introduction that sketches some of the relevant background on infinite-type surfaces and foliations of $3$-manifolds. In \Cref{sec:end-periodic-mapping-tori}, we introduce the star of the show, end-periodic homeomorphisms of infinite-type surfaces, and discuss the geometry and structure of their associated (compactified) mapping tori. The final two sections are devoted to sketching the bounds on volumes of end-periodic mapping tori with \Cref{sec:lower-bound} focusing on the lower bound of \cite{EndPeriodic2} and \Cref{sec:upper-bound} focusing on the upper bound of \cite{EndPeriodic1}.

\subsection*{Acknowledgments} My perspective on these topics have been shaped significantly by my collaborations with Chris Leininger and Elizabeth Field as well as a series of talks by (and accompanying conversations with) Yair Minsky at the University of Wisconsin-Madison in the Spring of 2023 on the topic of foliations and the geometry of 3-manifolds and his collection of work with Michael Landry and Sam Taylor \cite{LandryMinskyTaylor2021, LandryMinskyTaylor2023}. An earlier draft of this paper was improved by a careful reading and corrections from Connie On Yu Hui, Mujie Wang, and Brandis Whitfield. In particular, \Cref{fig:PeriodicCurve} was suggested by Brandis Whitfield. The exposition has also been corrected and clarified by detailed comments from both anonymous referees for which I am very grateful. Finally, I would like to give my warmest thanks to the organizers of the Riverside Workshop on Geometric Group Theory, Matt Durham and Thomas Koberda, for their hospitality, support, and invitation to contribute to these proceedings.  

\section{Introduction} \label{sec:intro}

\subsection{Infinite-type surfaces}

We will discuss surfaces of both finite and infinite type where a \emph{finite-type surface} is a surface with finitely-generated fundamental group and an \emph{infinite-type surface} is a surface whose fundamental group is not finitely generated. Throughout we will use $\Sigma$ to denote a finite-type surface and $S$ to denote an infinite-type surface. In both cases we require that our surfaces be connected and orientable. A classical result tells us that the homeomorphism type of a finite-type surface is determined entirely by its genus, number of boundary components, and number of punctures (or marked points). There is a similar classification result for infinite-type surfaces without boundary given by work of Ker\'ekj\'art\'o \cite{Ker} and Richards \cite{Richards}. Note that the genus of an infinite-type surface can be either finite or countably infinite as illustrated in \Cref{fig:Cantor}. See also \Cref{fig:ClassificationTheorem} for an example of three representations of a surface which are not ``obviously" homeomorphic. 

\begin{figure}
    \centering
    \begin{subfigure}[b]{0.45\textwidth}
        \centering
        \includegraphics[width = 3in]{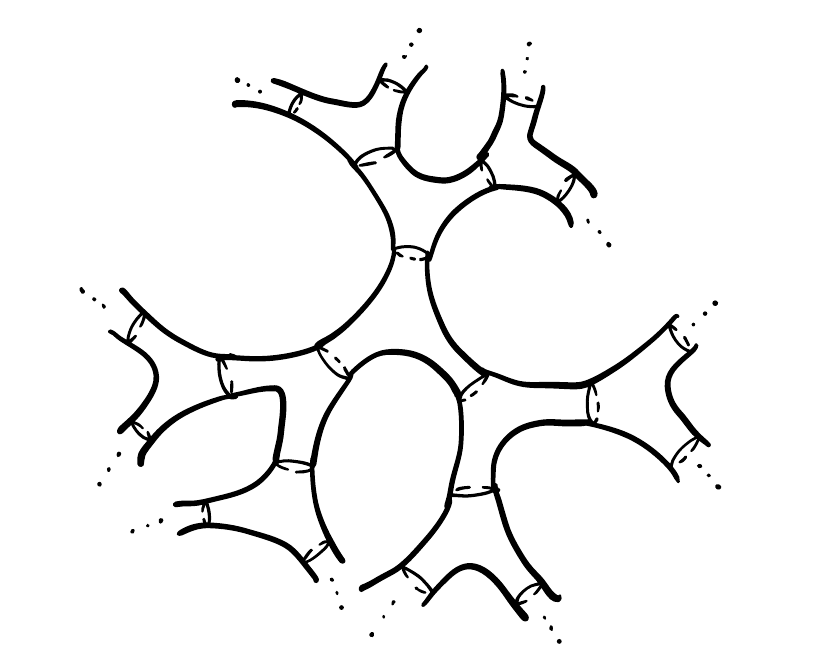}
        \caption{The Cantor Tree surface which is homeomorphic to a sphere minus a Cantor set of punctures.}
        \label{fig:CantorTree}
    \end{subfigure}
    \hfill
    \begin{subfigure}[b]{0.45\textwidth}
        \centering
        \includegraphics[width = 3in]{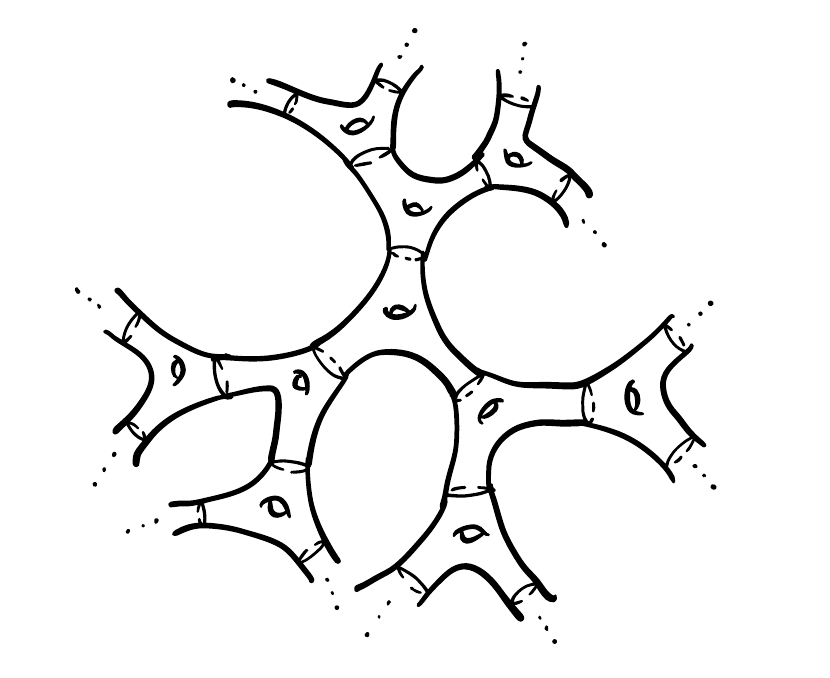}
        \caption{The Blooming Cantor Tree surface which is homeomorphic to a sphere minus a Cantor set of punctures where each end is also accumulated by genus.}
        \label{fig:BloomingCantorTree}
    \end{subfigure}
    \caption{Two infinite-type surfaces with end spaces homeomorphic to a Cantor set.}
    \label{fig:Cantor}
\end{figure}

\begin{theorem}[Classification of infinite-type surfaces]
    A surface is determined, up to homeomorphism, by its genus, its space of ends, and the subspace of ends accumulated by genus. 
\end{theorem}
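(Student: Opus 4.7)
The plan is to prove the two directions of the classification separately. First, I would check that the listed triple is a well-defined topological invariant: any homeomorphism $S \to S'$ induces a homeomorphism of end spaces (because ends are defined as an inverse limit of connected components of complements of compact sets, a purely topological construction), preserves genus (which may be computed as the supremum of genera of compact subsurfaces), and carries ends accumulated by genus to ends accumulated by genus (since this property is characterized by the existence of a handle in every neighborhood of the end).

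The harder direction is completeness: given $S$ and $S'$ with matching invariants, construct a homeomorphism. The strategy is to build it inductively along compatible compact exhaustions. First, choose exhaustions $S = \bigcup_i \Sigma_i$ and $S' = \bigcup_i \Sigma_i'$ by connected finite-type subsurfaces, each with no disk components in the complement and each complementary component having connected frontier. After passing to subsequences and enlarging, I would arrange at each stage a bijection between the complementary components of $\Sigma_i$ in $S$ and those of $\Sigma_i'$ in $S'$ such that paired components (i) cut off the same clopen subset of the end space, (ii) both are or neither is accumulated by genus, and (iii) have matching finite invariants (genus and number of boundary circles) whenever the clopen subset contains no ends accumulated by genus.

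With compatible exhaustions in hand, the classification of finite-type surfaces gives a homeomorphism $h_i : \Sigma_i \to \Sigma_i'$ at each stage, and the remaining task is to choose these coherently so that $h_{i+1}$ restricts to $h_i$ on $\Sigma_i$. For this, I would apply the finite-type classification to each connected component of $\overline{\Sigma_{i+1} \setminus \Sigma_i}$ and $\overline{\Sigma_{i+1}' \setminus \Sigma_i'}$: matched components have the same genus and number of boundary components, so there is a homeomorphism between them, and this can be arranged to agree with $h_i$ on the shared boundary by an isotopy of the boundary parameterization (using the fact that the mapping class group of a finite-type surface surjects onto the appropriate permutations of boundary components). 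The direct limit of the $h_i$ then assembles into a homeomorphism $S \to S'$.

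The main obstacle is arranging the compatible exhaustions in the second step: one must simultaneously control genus, boundary count, and the partition of the end space (with its distinguished subspace of ends accumulated by genus) induced by the complementary components of $\Sigma_i$. The key technical input is that the end space is a totally disconnected, compact, metrizable space, so any two clopen partitions admit a common refinement; this allows the exhaustions to be refined and enlarged locally until they match combinatorially. The ends-accumulated-by-genus decoration is handled by ensuring that any handles in $S$ not yet captured by $\Sigma_i$ lie in complementary components corresponding to clopen neighborhoods of ends in the designated subspace, which is possible precisely because these are the only ends that can absorb an unbounded supply of genus.
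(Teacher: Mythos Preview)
The paper does not prove this theorem at all: it is stated as a classical result and attributed in the surrounding text to Ker\'ekj\'art\'o and Richards, with a reference to \cite{Ker,Richards} and to the survey \cite{AramayonaVlamis2020} for further discussion. So there is no ``paper's own proof'' to compare your proposal against.

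That said, your outline is essentially the standard argument from Richards' paper: verify the triple is a homeomorphism invariant, then build a homeomorphism by an inductive back-and-forth along compatible compact exhaustions, using the finite-type classification on the successive differences. The main technical step you identify---arranging exhaustions whose complementary components induce matching clopen partitions of the end space, decorated by the ends-accumulated-by-genus subspace---is exactly where the work lies, and your appeal to total disconnectedness and compactness of the end space to refine partitions is the right mechanism. One point to tighten: when you invoke the mapping class group of a finite-type surface to fix up boundary parameterizations, what you actually need is that any homeomorphism of $\partial\Sigma$ extends over $\Sigma$ (given the right genus and boundary count), which is slightly more than surjecting onto permutations of components; but this is standard and causes no real difficulty.
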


Intuitively, ends are a way of ``going to infinity" in a surface. Precisely, ends are equivalence classes of ``exiting sequences". Here an \emph{exiting sequence} is a sequence $\{U_n\}_{n \in \mathbb N}$ of connected open subsets of a surface $S$ such that: 

\begin{enumerate}
    \item[(1)] $U_n \subset U_m$ whenever $m < n$,
    \item[(2)] $U_n$ is not relatively compact for any $n \in \mathbb N$,
    \item[(3)] $U_n$ has compact boundary for all $n \in \mathbb N$, and 
    \item[(4)] any relatively compact subset of $S$ is disjoint from all but finitely many $U_n$'s. 
\end{enumerate}

See \Cref{fig:ExitingSequence}. Two exiting sequences are \emph{equivalent} if every element of the first is eventually contained in some element of the second, and vice versa. The collection of all ends of an infinite-type surface $S$ is denoted \Ends(S) and can be topologized as follows: given an open set $U$ of $S$ with compact boundary, let $U^*$ be the set of points in \Ends(S) of the form $p = [\{V_m\}]$ with $V_m \subset U$ for some $m$. The collection of such $U^*$ form a basis for the topology on \Ends(S).


\begin{figure}
    \centering
    \includegraphics[scale=.6]{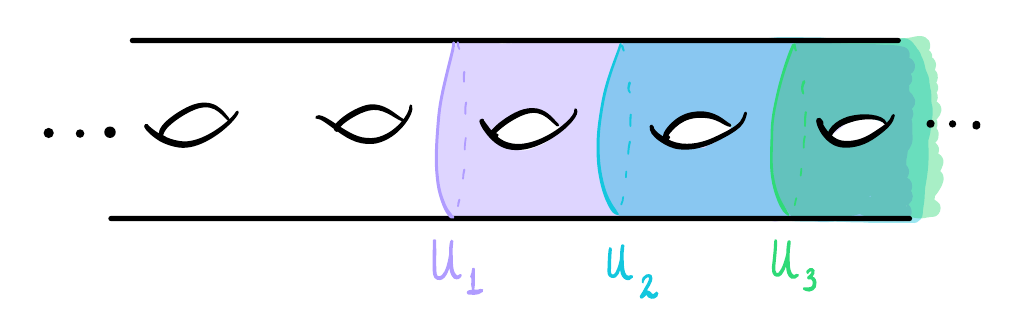}
    \caption{An exiting sequence on the ladder surface.}
    \label{fig:ExitingSequence}
\end{figure}

\begin{theorem}[Richards, \cite{Richards}]
    For any connected surface $S$, the space \Ends(S) is totally disconnected, second countable, and compact. In particular, \Ends(S) is homeomorphic to a closed subset of the Cantor set.
\end{theorem}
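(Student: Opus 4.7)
The plan is to realize $\Ends(S)$ as the inverse limit of a sequence of finite discrete sets, from which compactness, total disconnectedness, and second countability follow immediately, and then invoke the classical universality of the Cantor set.

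First I would build a compact exhaustion. Since $S$ is a connected, second-countable, orientable surface, it admits a sequence of compact, connected sub-surfaces $K_1 \subset K_2 \subset K_3 \subset \cdots$ with $K_n \subset \intt(K_{n+1})$, $\bigcup_n K_n = S$, and each $\partial K_n$ consisting of finitely many disjoint simple closed curves. (One way: triangulate $S$, set $K_n$ to be a regular neighborhood of the union of the first $n$ simplices, and clean up so that each boundary component is a simple closed curve; one must discard components of the complement that are already relatively compact.) Let $\mathcal{E}_n$ denote the finite set of non-relatively-compact components of $S \setminus K_n$; finiteness holds because each such component abuts $\partial K_n$ and $\partial K_n$ has finitely many components. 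Inclusion induces maps $\mathcal{E}_{n+1} \to \mathcal{E}_n$ sending a component of $S \setminus K_{n+1}$ to the unique component of $S \setminus K_n$ containing it.

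Next I would identify $\Ends(S)$ with $\varprojlim_n \mathcal{E}_n$. Given an exiting sequence $\{U_m\}$, condition (4) implies that for each $n$ eventually $U_m$ is disjoint from $K_n$, hence lies in a single component $V_n \in \mathcal{E}_n$; connectedness and condition (1) make $V_n$ well defined and compatible as $n$ varies. Conversely, any compatible thread $(V_n) \in \varprojlim \mathcal{E}_n$ is itself an exiting sequence. Two exiting sequences are equivalent precisely when they produce the same thread, giving a bijection $\Ends(S) \leftrightarrow \varprojlim \mathcal{E}_n$. I would then check that this bijection is a homeomorphism: the basic open sets $U^*$ defined in the text correspond, under the identification, to cylinder sets of the inverse limit topology, since one may replace an arbitrary $U$ with a component of some $S \setminus K_n$ without changing the end-set it contains.

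To finish, observe that each $\mathcal{E}_n$ is finite and discrete, so the product $\prod_n \mathcal{E}_n$ is compact (Tychonoff), totally disconnected, and metrizable — hence second countable. The inverse limit is a closed subspace (defined by the closed compatibility conditions $V_{n+1} \mapsto V_n$), and these three properties are inherited by closed subspaces. The ``in particular" statement then follows from the standard fact that every compact, totally disconnected, metrizable space embeds as a closed subset of the Cantor set $\{0,1\}^{\mathbb N}$: one can, for instance, write the space as an inverse limit of finite discrete sets, embed each finite set into a finite power of $\{0,1\}$, and assemble these embeddings into a closed embedding of the whole inverse limit. The most delicate step is really the first one — producing a compact exhaustion whose boundary circles are well-behaved enough that $\mathcal{E}_n$ is finite and the comparison to arbitrary exiting sequences is clean; once that is in place, the rest is a transfer of standard point-set topology through the inverse-limit description.
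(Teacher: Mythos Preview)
Your argument is correct and is in fact the standard inverse-limit proof of this result. Note, however, that the paper does not supply its own proof of this theorem: it is stated as background and attributed to Richards, with no argument given. So there is nothing in the paper to compare against; you have simply filled in the omitted proof, and done so along the expected lines. One small remark: in the last step you appeal to the fact that every compact, totally disconnected, metrizable space embeds as a closed subset of the Cantor set. This is true, but the cleanest route in your setup is to note that such a space, being compact metrizable and zero-dimensional, has a countable basis of clopen sets, and the resulting map to $\{0,1\}^{\mathbb N}$ given by characteristic functions is a continuous injection from a compact space to a Hausdorff space, hence a closed embedding. The inverse-limit description you already have makes this essentially immediate.
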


\begin{figure}
    \centering
    \includegraphics[width=.9\textwidth]{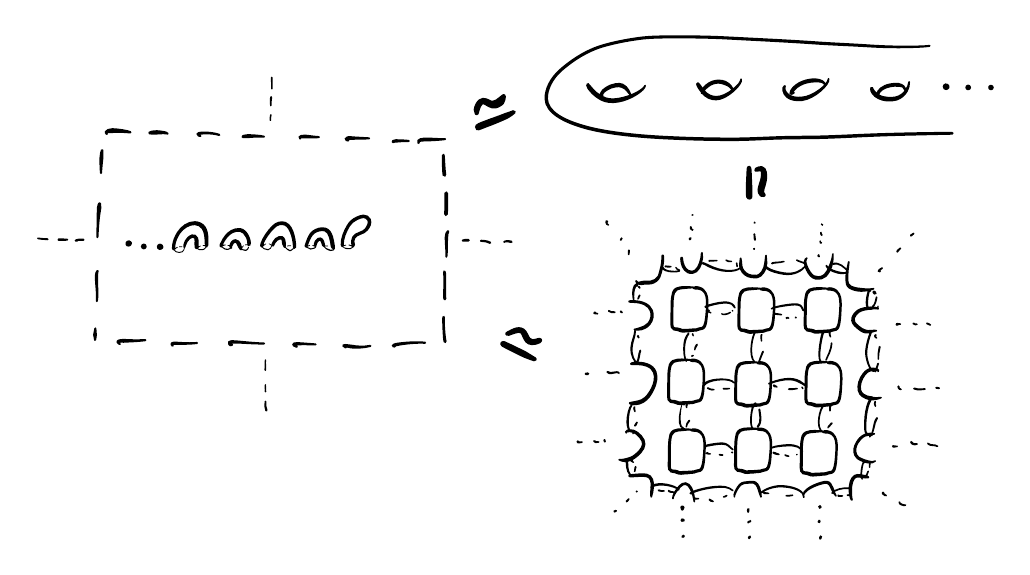}
    \caption{It is a consequence of the classification theorem that these three surfaces are homeomorphic.}
    \label{fig:ClassificationTheorem}
\end{figure}

The interested reader can refer to a survey of big mapping class groups by Aramayona and Vlamis for a more in-depth discussion of infinite-type surfaces \cite{AramayonaVlamis2020}.

\subsection{Homeomorphisms of infinite-type surfaces}

The most straightforward examples of homeomorphisms of infinite-type surfaces are those that directly correspond to homeomorphisms of finite-type surfaces such as rotations, Dehn twists, or, indeed, any homeomorphism that is compactly supported. However, there also exist examples of \emph{intrinsically infinite-type homeomorphisms} (this terminology was coined in \cite{AbbottMillerPatel}), i.e. homeomorphisms that are not compactly supported or a limit of compactly supported homeomorphisms. 


\begin{figure}
    \centering
    \includegraphics[scale = .5]{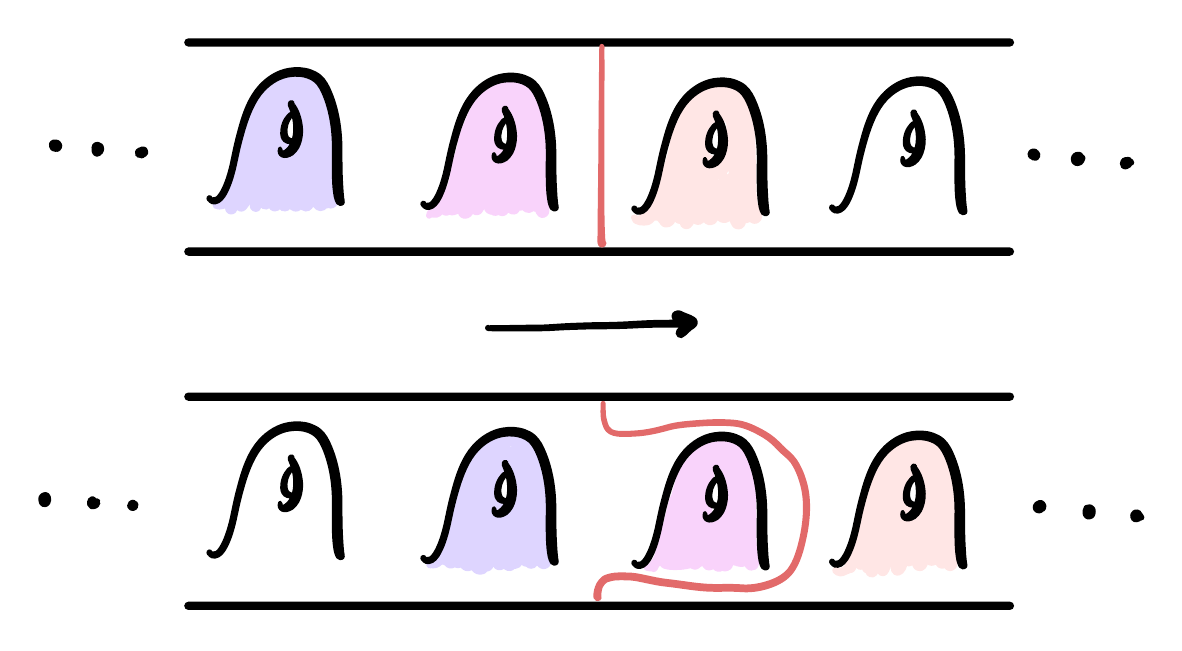}
    \caption{A handle strip and its image after applying the associated shift map.}
    \label{fig:HandleStrip}
\end{figure}

The example which we will make heavy use of throughout this survey is a \emph{handle shift} which can be constructed as follows: Consider a bi-infinite strip $T  \cong [0, 1] \times \mathbb R$ with handles attached at integer intervals. Define a homeomorphism $h: T \to T$ which shifts every handle over by one and which tapers to the identity on each boundary component of $T$. This \emph{handle strip} and its associated shift map is illustrated in \Cref{fig:HandleStrip}. Note that $T$ can be embedded into any infinite-type surface $S$ with countably infinite genus and $h$ can be extended by the identity to a homeomorphism of $S$. The resulting homeomorphism of $S$, which we will also call $h$, is a \emph{handle shift}. Note that the handle shift $h: S \to S$ is only of intrinsically infinite type when $T$ is embedded between two distinct ends of $S$.


Just as in the finite-type setting, we can study the mapping class group of an infinite-type surface $S$, \Map(S), which is the group of orientation-preserving homeomorphisms of $S$ up to isotopy. This is often given the moniker the ``big mapping class group", which is apt since it is a non-locally compact Polish group, and is an interesting object of study in its own right! However, in this work we will keep our focus relatively narrow and will not explore big mapping class groups very deeply. For a broad survey of big mapping class groups see \cite{AramayonaVlamis2020}. 

\subsection{Foliations of 3-manifolds}

A \emph{foliation} of an $n$-dimensional manifold $M$ is a decomposition of $M$ into $(n-1)$-dimensional submanifolds (possibly immersed). Thus, a foliation of a surface is a decomposition into lines or closed curves as illustrated in \Cref{fig:FoliationsSurfaces} while a foliation of a 3-manifold is a decomposition into surfaces.

\begin{figure}
    \centering
    \includegraphics[width=.9\textwidth]{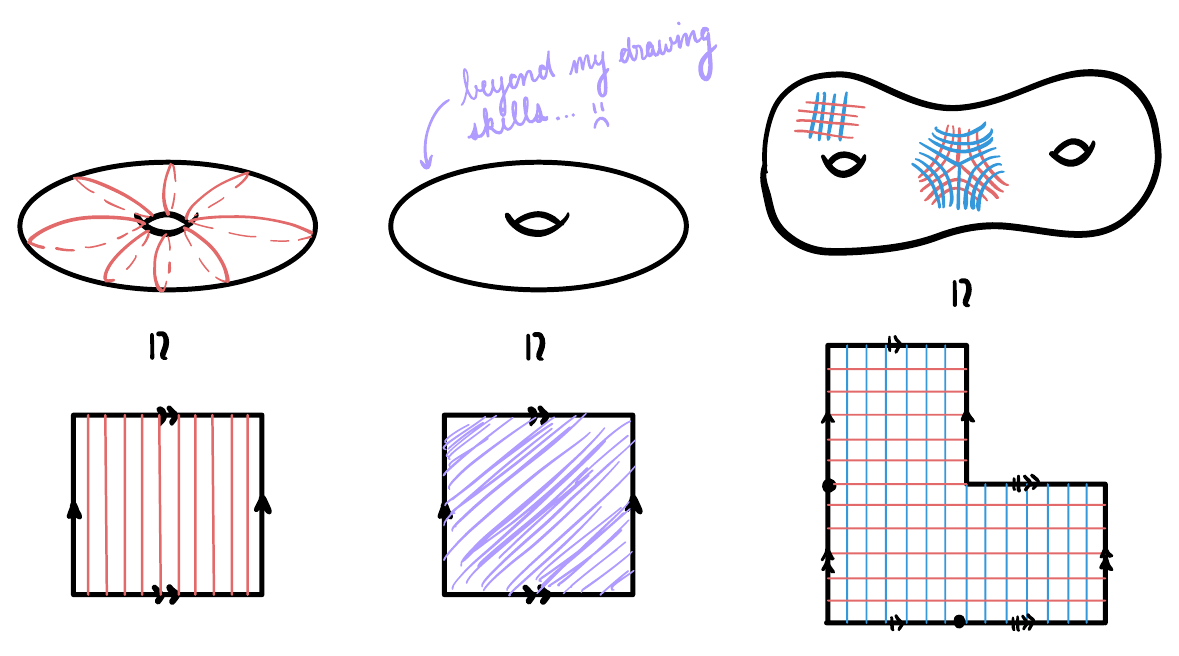}
    \caption{A foliation of a torus by simple closed curves, a foliation of the torus coming from a line of irrational slope (note that it has no closed leaf), and a pair of singular, transverse foliations on a genus 2 surface.}
    \label{fig:FoliationsSurfaces}
\end{figure}

\begin{example}
    An irreducible fibered $3$-manifold admits a foliation with leaves given by the surface fibers. Note that every irreducible fibered $3$-manifold arises as the mapping torus $M_f$ of some element $f \in \Map(\Sigma)$. See \Cref{fig:MappingTorus} for an illustration and \cite{StallingsFiber} for further details.
\end{example}

\begin{figure}
    \centering
    \includegraphics[width = \textwidth]{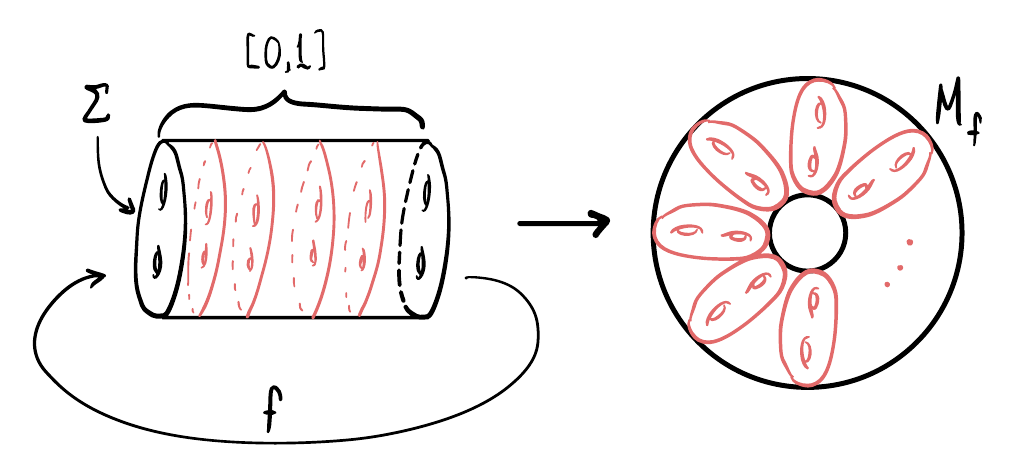}
    \caption{The mapping torus of a homeomorphism $f: \Sigma \to \Sigma$. Note that $M_f$ only depends on the mapping class of $f$.}
    \label{fig:MappingTorus}
\end{figure}

Not only do mapping tori give us a simple example of foliated $3$-manifolds, they also play an important role in understanding the geometry of $3$-manifolds. Recall the following foundational result of Thurston.

\begin{theorem}[Thurston's Hyperbolization Theorem, \cite{Thurston.Geometrization.1}] \label{t:hyperbolization}
    Let $M$ be an irreducible and atoroidal $3$-manifold. If $M$ is Haken, then $M$ admits a hyperbolic metric. 
\end{theorem}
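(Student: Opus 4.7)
The plan is to induct on a Haken hierarchy for $M$. Since $M$ is Haken, it admits an essential two-sided incompressible surface, and Haken's finiteness theorem provides a finite sequence of cuts along incompressible surfaces producing $M = M_0, M_1, \ldots, M_n$, where $M_n$ is a disjoint union of $3$-balls. The terminal pieces admit obvious hyperbolic structures (as convex polyhedra in $\mathbb{H}^3$ with totally geodesic boundary), and the strategy is to propagate a hyperbolic structure up the hierarchy, reconstructing $M$ one cut at a time.

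At each gluing step I would start with a $3$-manifold $N$ carrying a geometrically finite hyperbolic structure with incompressible boundary, and attempt to glue pieces of $\partial N$ via a prescribed homeomorphism $\phi$ to recover the previous stage of the hierarchy. The central conceptual tool is Thurston's \emph{skinning map} $\sigma \colon \Teich(\partial N) \to \Teich(\partial N)$, which records the conformal structure induced at infinity on one side of the boundary by each choice of conformal structure at infinity on the other. Producing a hyperbolic metric on the glued manifold then reduces to finding a fixed point of $\phi_* \circ \sigma$ on $\Teich(\partial N)$.

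The fixed-point problem splits into two very different cases. In the non-fibered case, one invokes Thurston's skinning map theorem: on a suitable $\sigma$-invariant compact subset of Teichm\"uller space the skinning map is uniformly contracting in the Teichm\"uller metric, so $\phi_* \circ \sigma$ has a unique fixed point. The atoroidal (and, where needed, acylindrical) hypothesis is crucial for ruling out the degenerations that would push iterates out to infinity in $\Teich(\partial N)$. In the fibered case $M = M_f$ for some $f \in \MCG(\Sigma)$, which must be pseudo-Anosov since $M$ is atoroidal; here one builds quasi-Fuchsian structures on $\Sigma \times \mathbb{R}$ from iterates of $f$, and the double limit theorem, applied to the pair of invariant measured foliations of $f$, produces a geometric limit whose conformal structures at infinity are exchanged by $f$. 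This limit descends to the hyperbolic metric on $M_f$.

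The main obstacle is precisely these two ingredients. Both the skinning map theorem and the double limit theorem are themselves very deep results whose proofs require the machinery of the Thurston compactification of Teichm\"uller space by projective measured foliations, a fine understanding of algebraic and geometric limits of Kleinian groups, and the control of pleated surfaces in manifolds with degenerating ends. Without these compactness and rigidity inputs, one cannot prevent the iterates of $\phi_* \circ \sigma$ from degenerating as one climbs the hierarchy, and this is exactly what makes hyperbolization hard rather than a formal exercise in gluing polyhedra.
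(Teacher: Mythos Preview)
The paper does not prove this theorem: it is stated as a black-box citation to Thurston's work, with pointers to \cite{TNotes, Otal-Survey-Haken, Otal-Survey-Fibered} for details. There is therefore no ``paper's own proof'' to compare against.

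That said, your outline is a faithful high-level summary of the actual Thurston--Otal--McMullen argument: induction on a Haken hierarchy, with the gluing step reduced to a fixed-point problem for the skinning map composed with the gluing isometry, handled via the bounded image theorem in the non-fibered case and the double limit theorem in the fibered case. Your final paragraph is also honest about where the real work lies. If anything, you might note that the contraction of the skinning map (McMullen's theorem) and Thurston's bounded image theorem are distinct ingredients, and that the acylindrical case is substantially easier than the general atoroidal one; but these are refinements rather than gaps. As a sketch this is accurate, and it is appropriate that an expository paper of this kind would not attempt to reproduce it.
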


When $M$ is fibered, and hence a mapping torus, the statement of hyperbolization reduces to the following characterization. For further discussion and details see \cite{TNotes, Otal-Survey-Haken, Otal-Survey-Fibered}.

\begin{theorem}[Thurston, \cite{Thurston.Geometrization.2}] \label{t:mapping-tori}
    Let $f \in \Map(\Sigma)$. $M_f$ is hyperbolic if and only if $f$ is psuedo-Anosov. 
\end{theorem}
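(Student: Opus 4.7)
My plan is to prove the two directions separately, deducing both from the Nielsen–Thurston classification combined with Thurston's Hyperbolization Theorem (\Cref{t:hyperbolization}).

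For the forward direction, I would prove the contrapositive: if $f$ is not pseudo-Anosov, then $M_f$ is not hyperbolic. By the Nielsen–Thurston classification, $f$ is either periodic, reducible, or pseudo-Anosov, so I need to rule out the first two cases. If $f$ has a representative of finite order $n$, then $M_f$ is finitely covered by $\Sigma \times S^1$; assuming $\Sigma$ has genus at least one (the low-genus cases can be handled separately since they do not support pseudo-Anosovs either), this product contains an essential embedded torus, so the cover is not atoroidal and hence not hyperbolic, which forces $M_f$ itself to be non-hyperbolic. If $f$ is reducible, there is an essential multicurve $\gamma \subset \Sigma$ with $f(\gamma)$ isotopic to $\gamma$; after isotoping so that $f$ permutes the components of $\gamma$, the image of $\gamma \times [0,1]$ in $M_f$ assembles into an embedded essential torus (or a pair of tori arising from the suspension), again obstructing hyperbolicity by Mostow rigidity combined with the JSJ picture.

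For the backward direction, I would verify the hypotheses of \Cref{t:hyperbolization} for $M_f$ when $f$ is pseudo-Anosov. Since $M_f$ fibers over $S^1$ with fiber $\Sigma$ of negative Euler characteristic, the fiber is incompressible, so $M_f$ is Haken, and standard arguments (the fiber lifts to a plane in the universal cover) show that $M_f$ is irreducible. The main obstacle is proving atoroidality: I would argue that any essential torus $T \subset M_f$ can be isotoped into a position where it is transverse to the fibration, and then the intersection pattern of $T$ with a fiber yields an essential multicurve $\gamma \subset \Sigma$ together with an isotopy class equation $f(\gamma) = \gamma$, contradicting the assumption that $f$ is pseudo-Anosov (equivalently, has no invariant multicurve). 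One must also rule out the case that $T$ is vertical, i.e.\ a suspension of a simple closed curve, which again produces an $f$-invariant multicurve; and handle the parabolic/cusped case when $\Sigma$ has punctures by passing to the setting where Thurston's theorem allows toroidal boundary.

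The principal difficulty is the atoroidality step, because putting an essential torus into good position with respect to the fibration requires either an innermost-disk or minimal-surface argument, and careful bookkeeping to ensure the resulting multicurve on $\Sigma$ is truly essential and truly fixed (up to isotopy) by $f$. The periodic and reducible cases of the forward direction are comparatively routine once the mapping torus construction is unpacked, and the Haken/irreducible part of the backward direction follows from Stallings' theorem on fibered 3-manifolds (cited already as \cite{StallingsFiber}).
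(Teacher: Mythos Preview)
The paper does not prove this theorem at all: it is quoted as a foundational result of Thurston, with pointers to \cite{TNotes, Otal-Survey-Haken, Otal-Survey-Fibered} for details, and the surrounding text explicitly says that when $M$ is fibered the statement of hyperbolization \emph{reduces} to this characterization. So there is no ``paper's own proof'' to compare against; your outline is simply filling in background that the paper takes for granted.

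That said, your sketch is the standard one and is essentially correct. Two remarks. First, for the atoroidality step you need not reach for minimal surfaces or an ad hoc innermost-disk argument: the paper already records exactly the tool you want as \Cref{thm:RoussarieThurston} (Roussarie--Thurston normal form), which lets you isotope an incompressible torus to be transverse to the taut foliation by fibers, after which its trace on a fiber is an $f$-invariant multicurve. Second, be aware that historically the fibered case (\cite{Thurston.Geometrization.2}) and the general Haken case (\cite{Thurston.Geometrization.1}) were proved by genuinely different methods---the fibered case via the double limit theorem rather than the hierarchy induction---so deducing \Cref{t:mapping-tori} from \Cref{t:hyperbolization} is logically fine once both are granted, but it is not how Thurston actually argued. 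Since the paper states both as black boxes, your deduction is perfectly legitimate in this expository context.
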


In this paper, we will often focus on the following guiding question.

\begin{question}
    How do the dynamics of $f$ predict/determine the geometry of $M_f$?
\end{question}

However, an even more pressing question in the reader's mind might be, how do these foliations relate to the infinite-type surfaces discussed at the beginning of this section? The following result of Cantwell--Conlon hints at the connection. Here by \emph{open} we mean without boundary. 

\begin{theorem}[Cantwell--Conlon, \cite{CantwellConlon1987}]
    Every (open) surface is a leaf (of a foliation of a closed, connected $3$-manifold). 
\end{theorem}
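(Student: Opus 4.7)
The plan is to separate out the finite-type case, where the statement already follows from the preceding discussion of mapping tori, and then reduce the infinite-type case to a limiting construction that exhausts $S$ by compact subsurfaces and builds the ambient 3-manifold one foliated block at a time. If $S$ is a finite-type open surface, then $S$ can be completed to a compact surface $\overline{S}$ with boundary, and $S$ appears as a leaf in the product foliation of $\overline{S} \times S^1$; if necessary, doubling along the boundary and pushing the boundary tori into Reeb components yields a closed 3-manifold foliated by copies of $\overline{S}$ (and hence copies of $S$ after removing the boundary leaves). So the core content is the infinite-type case.

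For infinite-type $S$, the strategy is to write $S = \bigcup_{n \in \mathbb{N}} K_n$ as an increasing union of connected compact subsurfaces with $K_n \subset \text{int}(K_{n+1})$, using the exhaustion guaranteed by $S$ being second countable. Each difference $K_{n+1} \smallsetminus \text{int}(K_n)$ is a finite disjoint union of compact surfaces $P_n^i$ with boundary components lying on $\partial K_n$ and $\partial K_{n+1}$. Build the closed 3-manifold $M$ as a nested union of foliated compact pieces $B_n \subset B_{n+1}$, where each $B_n$ is a 3-manifold with boundary carrying a transversely-oriented foliation that contains $K_n$ as a compact leaf (or as a piece of a leaf). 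The basic block to attach when passing from $B_n$ to $B_{n+1}$ is a product $P_n^i \times [0,1]$ foliated by spiraling leaves that on one side match onto the existing leaf containing $K_n$, and on the other limit onto simpler leaves (such as boundary tori or Reeb components). By carefully spiraling leaves rather than just stacking products, a single non-compact leaf threads through all of the blocks $B_n$ and is combinatorially homeomorphic to $S$ itself, while the exterior of $\bigcup_n B_n$ is filled in using Reeb-type components glued along the finitely many boundary tori that persist at each stage.

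The main obstacle is ensuring that the construction actually converges to a \emph{closed} 3-manifold whose foliation has a leaf precisely homeomorphic to $S$ (matching genus, end space, and the subspace of ends accumulated by genus from the Classification Theorem). Convergence requires that the blocks $B_n$ stabilize away from the infinitely many ends, so that in the limit only finitely many ``loose'' boundary tori remain to be capped off by Reeb components; this hinges on using the compactness of $\Ends(S)$, which by Richards' theorem embeds in the Cantor set, to organize the inductive attachments along a finite tree at each stage. Verifying that the limit leaf is really homeomorphic to $S$ — rather than, say, to an infinite cyclic cover of $S$ — is the subtle part: one must arrange the spiraling so the leaf passes through each attachment once and only once, and so the accumulation pattern of leaves in $M$ reproduces the prescribed topology at infinity. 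Ends accumulated by genus are the most delicate to model, since they require that infinitely many handles be threaded through blocks associated to a single end, while still keeping the surrounding manifold compact.
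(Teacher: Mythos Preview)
The paper does not prove this theorem. It is stated with attribution to Cantwell--Conlon \cite{CantwellConlon1987} and used only to motivate the connection between infinite-type surfaces and foliations of $3$-manifolds; no argument, sketch, or outline is given. So there is no ``paper's own proof'' against which to compare your proposal.

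As for the proposal itself, it is a strategy outline rather than a proof, and you say as much: the paragraph beginning ``The main obstacle\ldots'' lists exactly the points that must be established (convergence to a \emph{closed} $3$-manifold, correct identification of the limit leaf with $S$ including its end space and ends accumulated by genus) without carrying any of them out. Those are not minor bookkeeping details; they are the entire content of the theorem in the infinite-type case. The spiraling-block idea is in the right spirit, and indeed Cantwell--Conlon's actual construction uses related machinery (open books, turbulization), but the inductive gluing you describe does not obviously terminate in a compact manifold: if $S$ has infinitely many ends you will be attaching infinitely many blocks, and ``only finitely many loose boundary tori remain'' is an assertion, not an argument.

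Your finite-type paragraph also has a gap. The product foliation of $\overline S\times S^1$ has leaves homeomorphic to the \emph{compact} surface $\overline S$, not to the open surface $S$; ``removing the boundary leaves'' is not an operation that produces a foliation of a closed manifold, and it is unclear what ``copies of $S$'' you expect to find afterward. For finite-type open $S$ one typically realizes $S$ as a depth-one leaf spiraling onto compact leaves (e.g., via turbulization near the boundary tori of $\overline S\times S^1$), which is closer to what you attempt in the infinite-type case and should be stated that way.
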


Thus, there are necessarily much more complicated foliations of $3$-manifolds than the single example we have given so far. An important type of foliation that we will consider is a \emph{depth-one foliation}. This is a foliation with two types of leaves, compact (depth zero) and non-compact (depth one) leaves, where all the non-compact leaves accumulate onto the compact leaves. An attempt at an illustration of this is shown in \Cref{fig:DepthOneFoliation}, but perhaps a more meaningful picture can be seen if we go down a dimension as shown in \Cref{fig:ClosedLeafSurface}. 

\begin{figure}
    \centering
    \includegraphics[width = .5\textwidth]{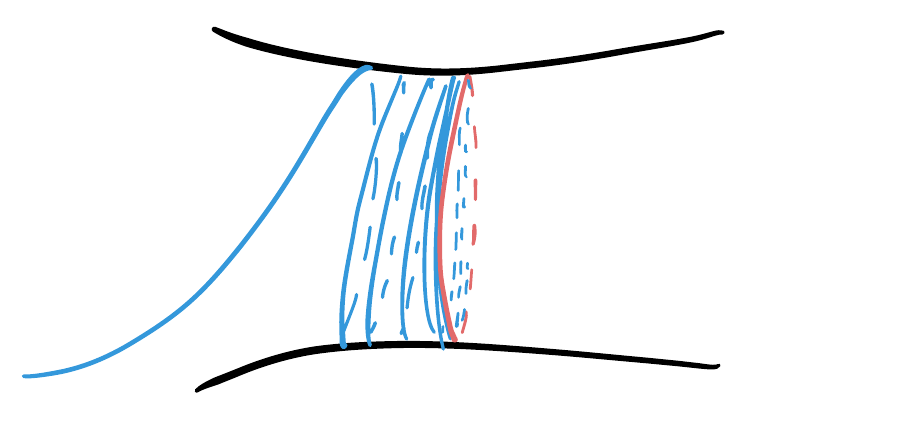}
    \caption{An infinite geodesic accumulating onto a simple closed geodesic on a surface.}
    \label{fig:ClosedLeafSurface}
\end{figure}

Note that for a closed $3$-manifold, a compact leaf has finite genus without boundary components and a non-compact leaf has infinite genus without planar ends or boundary components. 

Why are depth-one foliations so special? Consider a closed 3-manifold $M$ with a \emph{taut} depth one foliation $\mathcal F$. Note that we call a foliation \emph{taut} if there exists a homotopically non-trivial simple closed curve transverse to every leaf. Let $\mathcal F_c \subset \mathcal F$ denote the collection of compact leaves of $\mathcal F$. Then $M - \mathcal F_c$ is a collection of fibered $3$-manifolds with \emph{end-periodic monodromy}! We will postpone our discussion of end-periodic homeomorphisms to the next section. In the meantime we pose the following natural question which the author first heard stated by Yair Minsky in a talk he gave in Spring 2023.

\begin{figure}
    \centering
    \includegraphics[width = .9\textwidth]{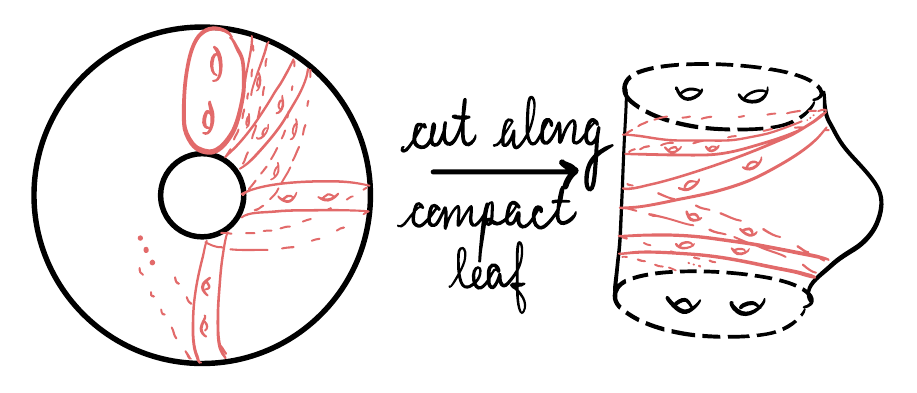}
    \caption{A depth one foliation of a closed $3$-manifold with a single, compact genus 2 leaf.}
    \label{fig:DepthOneFoliation}
\end{figure}

\begin{restatable}[Minsky]{question}{TautGeometry} \label{q:taut-geometry}
    Given a $3$-manifold with a (taut) foliation, can we predict its geometry?
\end{restatable}


We will not mention tautness very often. However, we will make use of the following ``normal form" for surfaces in taut foliated $3$-manifolds. See \cite{Rou74, CandConII, Thu72, Has86}.

\begin{theorem}[Roussarie--Thurston Normal Form] \label{thm:RoussarieThurston}
    Suppose $\mathcal F$ is a transversely oriented, taut foliation of a $3$-manifold $M$ for which $\partial M$ is a union of leaves. Suppose $\Sigma \subset M$ is a properly embedded, incompressible torus or annulus. After an isotopy of the embedding, which is the identity on the boundary in the annulus case, we have:
    \begin{enumerate}
        \item[(1)] $\Sigma$ is a torus and is either transverse to $\mathcal F$ or is a leaf of $\mathcal F$; or
        \item[(2)] $\Sigma$ is an annulus and is transverse to $\mathcal F$, except at finitely many circle tangencies occuring in the interior of $M$.
    \end{enumerate}
\end{theorem}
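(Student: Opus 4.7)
The plan is to combine a general position argument with the Novikov-type consequences of tautness and the incompressibility hypothesis. I would begin by perturbing the embedding $\Sigma \hookrightarrow M$ so that the singular foliation $\mathcal{F}|_\Sigma$ induced on $\Sigma$ has only generic tangency singularities: finitely many Morse-type point tangencies (centers of index $+1$ and saddles of index $-1$) in the interior, together with possibly finitely many circle tangencies. Circle tangencies are unavoidable in the annulus case since the boundary is pinned to leaves of $\mathcal{F}$, but in the torus case the boundary is empty, so circle tangencies that bound disks in $\Sigma$ or are inessential can be perturbed to a pair of Morse singularities. Because $\chi(\Sigma) = 0$, the Poincar\'e--Hopf formula applied to the line field tangent to $\mathcal{F}|_\Sigma$ yields $\#(\text{centers}) = \#(\text{saddles})$.

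The main work is then the elimination of point tangencies. A center tangency comes with a small embedded disk $D \subset \Sigma$ foliated by concentric closed leaves of $\mathcal{F}|_\Sigma$, whose boundary $\partial D$ lies in some leaf $L$ of $\mathcal{F}$. Tautness, via Novikov's theorem, forbids Reeb components and null-homotopic transverse loops; combined with incompressibility of $\Sigma$, this implies that $\partial D$ bounds a disk $D' \subset L$ and that $D \cup D'$ cobounds an embedded $3$-ball in $M$. An isotopy of $\Sigma$ supported in this ball (fixing $\partial \Sigma$) pushes $D$ across to $D'$ and cancels the center. To dispose of a saddle, I would locate a center--saddle pair that cobound a ``lens'' region in $\Sigma$ and perform an analogous local isotopy; iterating this cancellation on innermost pairs removes all point tangencies.

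After this reduction, the annulus case is complete: the only tangencies of $\Sigma$ with $\mathcal{F}$ that can persist are finitely many circle tangencies in the interior, as claimed in (2). For the torus case, any remaining tangency must be a circle tangency. If the tangent circle is inessential in $\Sigma$, the same disk-cancellation argument eliminates it; if it is essential, then a Reeb stability / holonomy argument, together with tautness and incompressibility, forces the leaf of $\mathcal{F}$ through the tangency to coincide with all of $\Sigma$. This gives the dichotomy of (1): either $\Sigma$ is transverse to $\mathcal{F}$ throughout, or $\Sigma$ is a leaf.

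The hardest step will be the cancellation of a center--saddle pair: one must verify that the three-dimensional region cobounded by the relevant subdisks of $\Sigma$ and of leaves of $\mathcal{F}$ is an embedded $3$-ball across which $\Sigma$ may be isotoped without creating new tangencies or leaving $M$. This is precisely where tautness (ruling out Reeb components) and incompressibility of $\Sigma$ (upgrading null-homotopic curves in leaves to bounding disks in those same leaves) are both essential, and where the argument is most delicate; once that local model is established, the global conclusion follows by induction on the number of point tangencies.
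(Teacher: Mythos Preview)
The paper does not prove this theorem at all: it is quoted as a background result with references to Roussarie, Thurston, Cantwell--Conlon, and Hass, and is then used as a black box in the atoroidal/acylindrical arguments. So there is no ``paper's own proof'' to compare your proposal against.

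That said, your outline is essentially the classical Roussarie argument and is on the right track. A couple of points where you should tighten the reasoning if you want this to stand as an actual proof: first, in generic position for a \emph{closed} $\Sigma$ there are no circle tangencies at all---the tangency locus consists of finitely many Morse points---so your remark about perturbing inessential circle tangencies into Morse pairs is unnecessary in the torus case. Circle tangencies only enter in the annulus case because the boundary is pinned to leaves, and they appear not as leftover singularities but as the ``fold'' circles that remain once all centers and saddles have been cancelled. Second, your description of the center cancellation is slightly off: one does not cancel a single center against a disk in a leaf, but rather expands the family of concentric closed leaves around an innermost center until it meets a saddle, and then uses tautness (no vanishing cycles) and incompressibility to find a $3$--ball across which the center--saddle pair can be cancelled simultaneously. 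You identify this as the hardest step, which is correct; the actual execution is the content of Roussarie's paper and is more delicate than your sketch indicates.
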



\begin{figure}
    \centering
    \includegraphics[scale = .4]{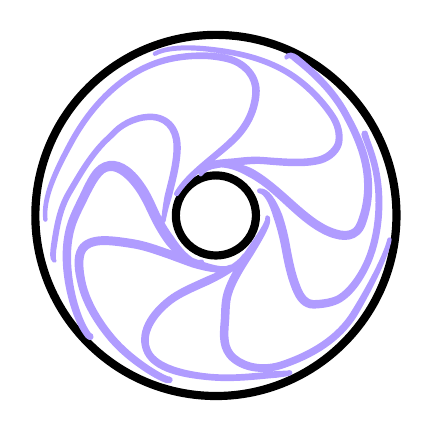}
    \caption{A cross section of the Reeb foliation of the solid torus.}
    \label{fig:ReebFoliation}
\end{figure}

\begin{nonexample}
    The Reeb foliation (see \Cref{fig:ReebFoliation}) of the solid torus is not taut.
\end{nonexample}

It turns out that in the hyperbolic setting, this non-example is the only obstruction to being taut! 


\begin{corollary}[Novikov, \cite{Novikov}] \label{cor:Novikov}
    If $M$ is hyperbolic, $\mathcal F$ has no Reeb component (this is called \emph{Reebless}) if and only if it is taut.
\end{corollary}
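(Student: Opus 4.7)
The plan is to prove each direction separately; the forward implication (taut $\Rightarrow$ Reebless) holds for any closed $3$-manifold, while the converse is where hyperbolicity is essential.

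For the forward direction, I would argue by contrapositive: suppose $\mathcal F$ contains a Reeb component $R$ with torus boundary leaf $T$. The interior leaves of $R$ are all planes accumulating on $T$ from inside $R$, with holonomy around the core of $R$ contracting towards $T$. Choose a smooth vector field $X$ transverse to $\mathcal F$; after reversing sign on $R$ if necessary, every orbit of $X$ starting at an interior point of $R$ remains trapped in $R$ and spirals asymptotically onto $T$. In particular no such orbit can close up, so no closed transverse loop passes through any interior leaf of $R$, and $\mathcal F$ fails to be taut.

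For the reverse direction, I would argue by contradiction: assume $\mathcal F$ is Reebless but not taut, and work to produce an embedded incompressible torus in $M$, which will contradict atoroidality of a closed hyperbolic $3$-manifold. By the Goodman--Sullivan analysis of non-taut foliations, the set of leaves that are met by no closed transverse curve is a non-empty closed sublamination, and the ``dead-end'' behaviour along its frontier must be organised around a compact leaf. Reeblessness, via Novikov's theorem, forces every transverse loop to be essential and every compact leaf to be $\pi_1$-injective with trivial one-sided holonomy on the side accumulated by non-compact leaves. A holonomy analysis of such a boundary compact leaf then produces an embedded torus $\Sigma$ in $M$ which is either a leaf of $\mathcal F$ or transverse to $\mathcal F$ (after putting things in Roussarie--Thurston normal form via \Cref{thm:RoussarieThurston} to make ``transverse'' honest). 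In either case the $\pi_1$-injectivity from Novikov's theorem, together with the tautness failure meaning $\Sigma$ is not nullhomotopic, upgrades $\Sigma$ to an incompressible torus. Since $M$ is closed and hyperbolic, Thurston's hyperbolization (\Cref{t:hyperbolization}) shows $M$ is atoroidal, a contradiction.

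The main obstacle will be the step in the reverse direction that extracts a torus (either a leaf or a transverse surface) from the mere failure of tautness under the Reebless hypothesis; everything else is either classical (Novikov's theorem, atoroidality of hyperbolic $M$) or a routine contrapositive. Concretely, one must show that the frontier of the dead-end region cannot have more complicated topology: this is exactly where Reeblessness is used, since without the Reeb obstruction the only local model for a dead-end boundary is a two-sided torus leaf (or a transverse torus) rather than a higher-genus surface. Once that torus is produced, the contradiction with hyperbolic atoroidality is immediate.
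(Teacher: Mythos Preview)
Your overall strategy for the nontrivial direction (Reebless $\Rightarrow$ taut) matches the paper's: produce an essential torus and contradict atoroidality of the hyperbolic $M$. But the paper's route is far more direct, and your proposal misplaces where Reeblessness actually enters.

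The paper's entire argument is this: by Goodman \cite{Goodman1975}, any leaf admitting no closed transversal is a torus---this holds for arbitrary codimension-one foliations of closed orientable $3$-manifolds, with \emph{no} Reeblessness hypothesis. So if $\mathcal F$ is not taut, some leaf is already a torus $T$; Reeblessness then enters only via \Cref{thm:Novikov} to make $T$ $\pi_1$-injective, whence $M$ is toroidal. Your detour through dead-end sublaminations, holonomy analysis, and Roussarie--Thurston normal form is unnecessary, since you never need a transverse torus. More importantly, your final paragraph locates Reeblessness in the wrong step: you say it is what forces the dead-end leaf to be a torus rather than higher genus, but that is Goodman's Euler-characteristic result and needs no Reeblessness at all. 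Reeblessness is used only at the very end, to exclude the torus leaf from bounding a solid torus (as it would in a Reeb component of $S^3$) and hence being compressible.
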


\Cref{cor:Novikov} follows quite directly from the following characterization of Reebless foliations.

\begin{theorem}[Novikov, \cite{Novikov}] \label{thm:Novikov}
    Let $M$ be a $3$-manifold and let $\mathcal F$ be a Reebless foliation. Then
    \begin{enumerate}
        \item[(1)] every leaf $\lambda$ is $\pi_1$-injective; and
        \item[(2)] every transverse loop $\gamma \subset M$ is essential in $\pi_1(M)$.
    \end{enumerate}
\end{theorem}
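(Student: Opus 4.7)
The plan is to prove the contrapositive of both statements simultaneously: if either conclusion fails, $\mathcal F$ contains a Reeb component, contradicting the Reebless hypothesis. The key unifying tool is the notion of a \emph{vanishing cycle} for $\mathcal F$, that is, a continuous family of loops $\{\gamma_t\}_{t \in [0, 1)}$ with $\gamma_t$ contained in a leaf $L_t$, such that $\gamma_0$ is essential in $L_0$, while for every $t > 0$ the loop $\gamma_t$ bounds an embedded disk in $L_t$, with the disks shrinking to a point as $t \to 1$. Modulo this notion, the claim will follow from the bridge lemma: \emph{if $\mathcal F$ has a vanishing cycle, then $\mathcal F$ has a Reeb component.}

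For part (1), suppose some leaf $\lambda$ is not $\pi_1$-injective and pick a loop $\alpha \subset \lambda$ that is essential in $\lambda$ but null-homotopic in $M$. I would realize the null-homotopy as a singular disk $f\colon D^2 \to M$ with $f|_{\partial D^2} = \alpha$, and after a small perturbation arrange that $f$ is transverse to $\mathcal F$ in the interior, with $\partial D^2$ mapping into the leaf $\lambda$. The pullback $f^*\mathcal F$ is then a singular foliation on $D^2$ whose boundary is a leaf and whose interior singularities are generically Morse (centers and saddles). A Poincaré--Hopf count relates the numbers of centers and saddles, and the core of the proof is the Haefliger argument: by inductively simplifying this singular foliation (eliminating a saddle against a center, cutting along separatrices) and tracking the holonomy of $\mathcal F$ along the boundary of a maximal disk of transversals, one extracts the desired vanishing cycle based at $\alpha$.

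For part (2), suppose $\gamma$ is a transverse loop that is null-homotopic in $M$, and let $f\colon D^2 \to M$ realize the null-homotopy. The same general position argument produces a singular foliation $f^*\mathcal F$ on $D^2$, but now with boundary \emph{transverse} to the foliation rather than tangent to it. A parallel (and in fact simpler) Haefliger-style analysis shrinks the spiral structure near $\partial D^2$ to produce a vanishing cycle. The main obstacle in both cases is the same: making rigorous the stepwise simplification of the singular foliation on $D^2$ so that one truly obtains a limit loop that is essential in its leaf while bounding shrinking disks in nearby leaves; this requires careful control of general position and of how the holonomy is propagated across each elementary move.

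Finally, given a vanishing cycle, I would produce the Reeb component by a compactness and saturation argument. The shrinking disks $\Delta_t \subset L_t$ accumulate, in the Hausdorff topology on compact subsets of $M$, onto a compact set contained in a union of leaves; passing to a minimal such invariant set and using the local product structure of $\mathcal F$, I can thicken a neighborhood of the limiting configuration into a solid torus $T \subset M$ whose boundary is a torus leaf and whose interior is foliated precisely as a Reeb component (the disks $\Delta_t$ spiral onto $\partial T$, reproducing the cross-section picture of Figure~\ref{fig:ReebFoliation}). This contradicts the Reebless assumption, establishing both (1) and (2).
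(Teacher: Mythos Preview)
The paper does not prove this theorem; it is stated as Novikov's result with a citation to \cite{Novikov} and then used as a black box to derive Corollary~\ref{cor:Novikov}. So there is no in-paper proof to compare against.

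That said, your sketch is the classical Novikov--Haefliger route and is correct in outline: reduce both (1) and (2) to the existence of a vanishing cycle by putting a null-homotopy disk in general position with respect to $\mathcal F$ and analyzing the induced singular foliation on $D^2$, then show a vanishing cycle forces a Reeb component. Two small comments. First, in your definition of vanishing cycle the clause ``the disks shrinking to a point as $t \to 1$'' is neither standard nor needed (and is not generally true); one only requires $\gamma_0$ essential in $L_0$ and $\gamma_t$ bounding a disk in $L_t$ for $t>0$. Second, the passage from a vanishing cycle to a Reeb component is the genuinely hard step in Novikov's argument and warrants more than a compactness-and-saturation gesture: one needs an a priori area bound on the disks $\Delta_t$ (coming from Reeb stability and the triviality of holonomy around $\gamma_t$) to ensure they accumulate on a compact set, and then a nontrivial analysis to identify the boundary of the saturated closure as a torus leaf with Reeb-type interior. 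Your outline points in the right direction, but that step is where the real work lies.
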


\begin{proof}[Proof of Corollary]
    \Cref{thm:Novikov} tells us that every leaf is essential if $\mathcal F$ is Reebless. Since leaves which do not admit a transversal are necessarily tori \cite{Goodman1975}, we know that if $M$ admits a foliation which is Reebless but not taut, then $M$ is toroidal. Thus, because $M$ is toroidal, it is not hyperbolic. 
\end{proof}
 
\section{End-periodic mapping tori} \label{sec:end-periodic-mapping-tori} 

In the last section we introduced infinite-type surfaces and their homeomorphisms as well as foliations of 3-manifolds. In this section we will introduce the infinite-type surface homeomorphisms which tie these two topics together for us.

\subsection{End-periodic homeomorphisms}


The study of end-periodic homeomorphisms was initiated in unpublished work of Handel--Miller and developed in work of Cantwell, Conlon, and Fenley \cite{CC-examples, CC-book, FenleyThesis1989, FenleyCT, Fenley-depth-one}.

\begin{definition}
    An \emph{end-periodic homeomorphism} $f: S \to S$ of a surface $S$ with finitely many ends all accumulated by genus, as illustrated in \Cref{fig:FiniteEnds}, is a homeomorphism satisfying the following. There exists $m>0$ such that for each end $E$ of $S$, there is a neighborhood $U_E$ of $E$ so that either 
    \begin{enumerate}
        \item[(i)] $f^m(U_E) \subsetneq U_E$ and the sets ${f^{nm}(U_E)}_{n > 0}$ form a neighborhood basis of $E$; or
        \item[(ii)] $f^{-m}(U_E) \subsetneq U_E$ and the sets ${f^{-nm}(U_E)}_{n > 0}$ form a neighborhood basis of $E$.
    \end{enumerate}
    Note that we call such a neighborhood $U_E$ a \emph{nesting neighborhood}.
\end{definition}

\begin{figure}
    \centering
    \begin{subfigure}[b]{0.45\textwidth}
        \centering
        \includegraphics[width = 3in]{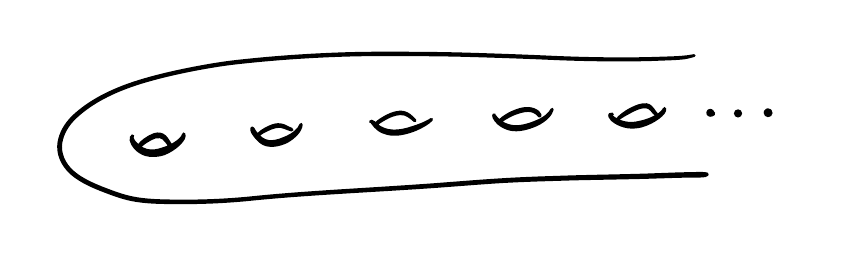}
        \caption{The Loch Ness Monster surface.}
        \label{fig:LochNess}
    \end{subfigure}
    \hfill
    \begin{subfigure}[b]{0.45\textwidth}
        \centering
        \includegraphics[width = 3in]{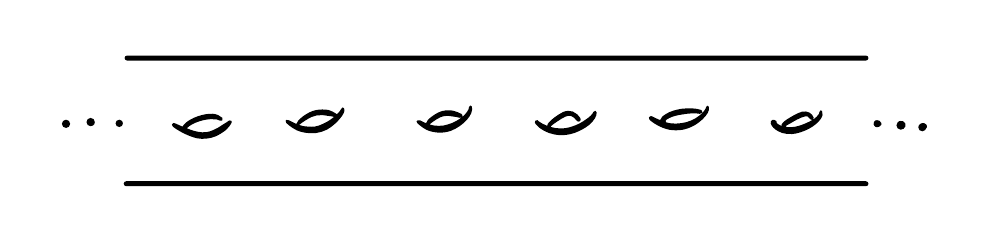}
        \caption{The Ladder surface.}
        \label{fig:Ladder}
    \end{subfigure}
    
    \begin{subfigure}[b]{0.45\textwidth}
        \centering
        \includegraphics[width = 2in]{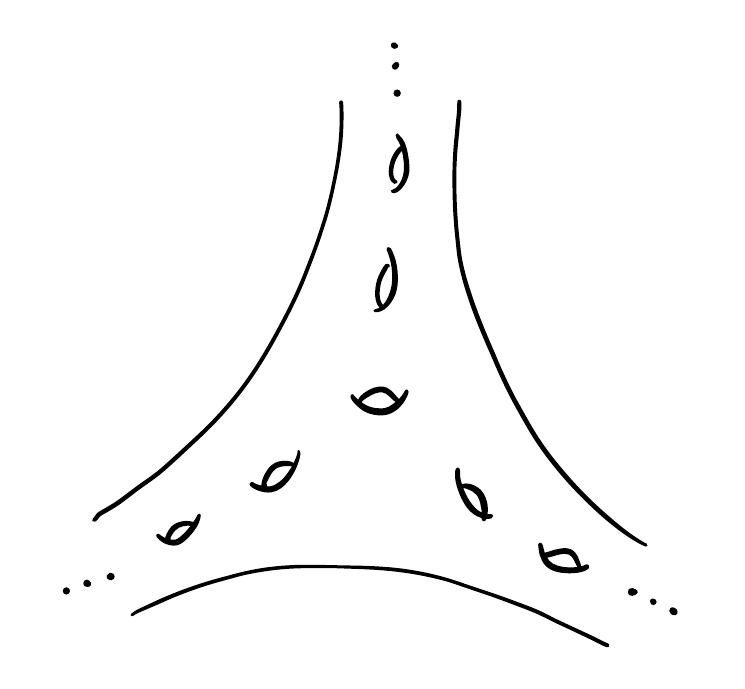}
        \caption{The Tripod surface.}
        \label{fig:Tripod}
    \end{subfigure}
\caption{Three surfaces with finitely many ends all accumulated by genus.}
    \label{fig:FiniteEnds}
\end{figure}

In the above definition, an end of $S$ is called \emph{attracting} if it falls into case (i) and is called \emph{repelling} if it falls into case (ii). Thus, a more concise definition is that a homeomorphism $f: S \to S$ of a surface $S$ with finitely many ends all accumulated by genus is end-periodic if every end of $S$ is either attracting or repelling under the action of $f$. This \emph{periodic} behavior on the ends is illustrated in \Cref{fig:PeriodicInEnds}.

\begin{figure}
    \centering
    \includegraphics[width=.8\textwidth]{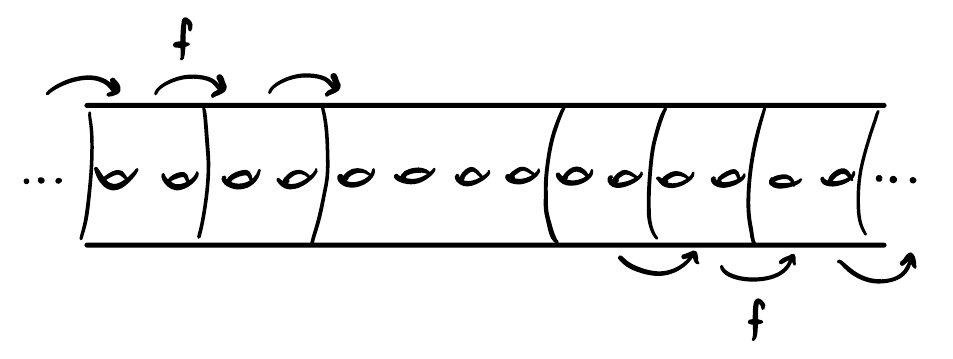}
    \caption{The name \emph{end-periodic} describes the fact that outside of some compact subsurface the action of $f: S \to S$ is periodic in certain neighborhoods of the ends of $S$.}
    \label{fig:PeriodicInEnds}
\end{figure}

\subsection{Geometry of end-periodic mapping tori} 

In this section we pose and then address the following question: what is the geometry of $M_f$ for $f$ end-periodic? It turns out that we will answer a slightly different question where we consider a compactification $\overline M_f$ of $M_f$. The following result appears as Proposition~3.1 in \cite{EndPeriodic1}.

\begin{theorem}\label{t:end-periodic-mapping-tori}
    Let $f: S \to S$ be an end-periodic homeomorphism. Then $M_f$ is the interior of a compact, irreducible $3$-manifold with incompressible boundary called $\overline M_f$. Furthermore,
    
    \begin{enumerate}
        \item[(i)] if $\overline M_f$ is atoroidal, then it admits a convex hyperbolic metric; and 
        \item[(ii)] if $\overline M_f$ is additionally acylindrical, then it admits a unique (up to isometry) convex hyperbolic metric with respect to which $\partial M_f$ is totally geodesic.  
    \end{enumerate} 
\end{theorem}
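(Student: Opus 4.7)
The plan is to construct the compactification $\overline M_f$ explicitly end-by-end, verify its topological hypotheses, and then invoke Thurston's Hyperbolization Theorem in the setting of Haken manifolds with incompressible boundary. For each end $E$ of $S$, fix a nesting neighborhood $U_E$ with $\partial U_E$ a disjoint union of smooth circles, so that (after replacing $m$ by a multiple if needed) $f^m(U_E) \subsetneq U_E$ in the attracting case, and analogously in the repelling case. Form the compact ``juncture'' $A_E = U_E \setminus \mathrm{int}(f^m(U_E))$, whose boundary decomposes as $\partial U_E \sqcup f^m(\partial U_E)$, and let $\Sigma_E$ be the closed surface obtained by identifying these two boundary components via $f^m$. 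The telescoping tower $U_E \supsetneq f^m(U_E) \supsetneq f^{2m}(U_E) \supsetneq \cdots$ partitions a neighborhood of $E$ in $S$ into countably many shells $f^{nm}(A_E)$, and in $M_f$ these shells, together with the gluing $(x,1)\sim(f(x),0)$, assemble (after some bookkeeping that hinges on the nesting property) into a half-open product $\Sigma_E \times [0,\infty)$. Compactify by attaching $\Sigma_E \times \{\infty\}$ for each $f$-orbit of ends. I expect this to be the main obstacle: one must choose the $U_E$ so that their boundaries and all relevant $f$-iterates are mutually disjoint and transverse to the vertical direction, possibly after shrinking the initial neighborhoods and isotoping $f^m|_{\partial U_E}$ into a normal form before the product structure of the end neighborhoods can be verified.

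With the compactification in hand, I would verify the topological hypotheses of hyperbolization. Irreducibility follows because $M_f$ is aspherical (it fibers over $S^1$ with $K(\pi,1)$ fiber $S$, and the end-compactification does not change homotopy type since it adjoins a collar), so $\overline M_f$ is itself a $K(\pi,1)$ and the sphere theorem forces every embedded $2$-sphere to bound a ball. For incompressibility of each $\Sigma_E$, one checks that the inclusion $\Sigma_E \hookrightarrow \overline M_f$ is $\pi_1$-injective: a compressing disk for $\gamma \subset \Sigma_E$ would, via the product neighborhood $\Sigma_E \times [0,\infty)$ and the essential inclusion $U_E \subset S$, project to a disk compressing $\gamma$ inside a fiber of the Stallings fibration $M_f \to S^1$, forcing $\gamma$ to have been trivial in $\Sigma_E$ to begin with via the exact sequence $1 \to \pi_1(S) \to \pi_1(M_f) \to \mathbb Z \to 1$.

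Part (i) now follows directly: $\overline M_f$ is compact, irreducible, atoroidal (by hypothesis), and contains the non-empty incompressible surface $\partial \overline M_f$, hence is Haken, so Thurston's Hyperbolization Theorem for Haken manifolds with incompressible boundary (the natural extension of Theorem~\ref{t:hyperbolization}) produces a geometrically finite hyperbolic structure on $M_f$ whose convex core realizes $\overline M_f$ as a convex hyperbolic $3$-manifold. For part (ii), the additional acylindricity hypothesis activates Thurston's uniformization for acylindrical Haken manifolds, producing a hyperbolic structure with totally geodesic boundary; uniqueness up to isometry then follows from Mostow-type rigidity, since acylindricity eliminates the essential annuli that would otherwise allow nontrivial bending deformations of the totally geodesic boundary.
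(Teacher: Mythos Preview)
Your argument is correct and largely parallels the paper's, but there are two packaging differences worth flagging. First, the paper constructs $\overline M_f$ globally rather than end-by-end: it defines the positive and negative escaping sets $\mathcal U_\pm = \bigcup_{n\ge 0} f^{\mp n}(U_\pm)$, shows $\langle f\rangle$ acts cocompactly on each with closed quotients $S_\pm$, and then partially compactifies the infinite cyclic cover $S\times(-\infty,\infty)$ by adjoining $\mathcal U_+\times\{\infty\}$ and $\mathcal U_-\times\{-\infty\}$ before taking the $\langle F\rangle$-quotient. Your end-local juncture construction yields the same manifold (your $\Sigma_E$ are precisely the components of $S_\pm$), but the cyclic-cover picture sidesteps the ``bookkeeping'' you anticipate about making all the $\partial U_E$ and their iterates mutually disjoint inside $M_f$, since in $S\times\mathbb R$ there is no interference between distinct ends or distinct heights.

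Second, for part (ii) the paper does not invoke an off-the-shelf ``acylindrical Haken $\Rightarrow$ totally geodesic boundary'' theorem together with a deformation-theoretic rigidity statement as you do; instead it runs the doubling trick explicitly. Atoroidality plus acylindricity of $\overline M_f$ forces the double $D\overline M_f$ to be closed, Haken, and atoroidal, so Theorem~\ref{t:hyperbolization} and Mostow rigidity give a unique hyperbolic metric on $D\overline M_f$; the reflection symmetry then makes $\partial\overline M_f$ totally geodesic, and uniqueness on $\overline M_f$ is inherited from uniqueness on the double. Your route and the paper's are of course equivalent (the black-box theorem you cite is typically proved via exactly this doubling), but the paper's version makes the role of Mostow rigidity concrete and avoids any appeal to the structure of the deformation space.
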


The first part of \Cref{t:end-periodic-mapping-tori}, establishing the existence of $\overline M_f$, is well-known (see \cite{Fenley-depth-one}), while \Cref{t:end-periodic-mapping-tori} (i) follows from \Cref{t:hyperbolization} and \Cref{t:end-periodic-mapping-tori} (ii) follows from the following straightforward argument. If $\overline M_f$ is atoroidal and acylindrical, then its double $D\overline M_f$ is closed, Haken, \emph{and} atoroidal. A cartoon of this doubling procedure is shown in \Cref{fig:Double}. Thus, by \Cref{t:hyperbolization} and Mostow rigidity $D \overline M_f$ admits a unique hyperbolic metric. This implies that there is a unique convex hyperbolic metric on $\overline M_f$ such that $\partial \overline M_f$ is totally geodesic. We also note that by work of Storm \cite{Storm2} this is also the unique minimal-volume hyperbolic metric on $\overline M_f$. We will often refer to $\overline M_f$ itself as the \emph{end-periodic mapping torus} of $f$.

\begin{figure}
    \centering
    \includegraphics[width=\textwidth]{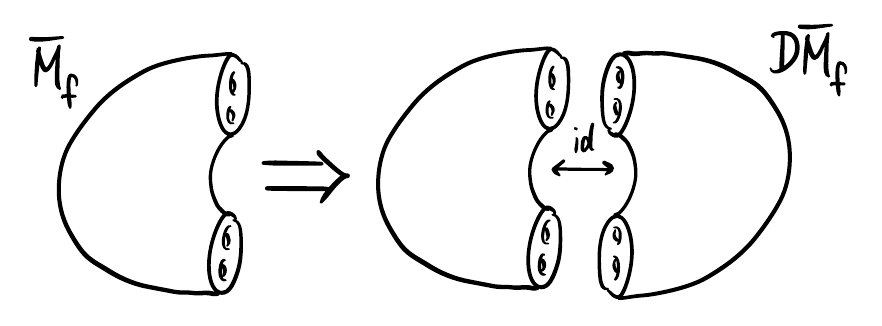}
    \caption{The end-periodic mapping tori $\overline M_f$ and its double $D \overline M_f$,}
    \label{fig:Double}
\end{figure}

In light of this result, it is natural to ask the following.

\begin{question}\label{q:irreducibility}
    What conditions on the end-periodic homeomorphism $f$ guarantee that $\overline M_f$ is either atoroidal or acylindrical?
\end{question}

Recall that in the finite-type setting we know that $M_{f}$ is hyperbolic if and only if $f$ is pseudo-Anosov as stated in \Cref{t:hyperbolization}. Further we know by the Nielsen--Thurston Classification that $f$ is pseudo-Anosov if and only if $f$ is \emph{infinite-order} and \emph{irreducible}, that is, for any simple closed curve $\gamma$ we have that $f^n(\gamma) \neq \gamma$ for all $n \neq 0$.

Thus, a natural first step to answering \Cref{q:irreducibility} is to give a notion of irreducibility for end-periodic homeomorphisms. This definition was introduced by Handel--Miller in unpublished work and can also be found in \cite{FenleyThesis1989}.

\begin{definition}[Handel--Miller]
    An end-periodic homeomorphism $f: S \to S$ is \emph{irreducible} if it has no:
    \begin{enumerate}
        \item[(1)] \emph{reducing curves}, i.e. a simple closed curve $\gamma$ such that there exists $n, m$ with $n < m$ such that $f^n(\gamma)$ is in a nesting neighborhood of a repelling end and $f^m(\gamma)$ is in a nesting neighborhood of an attracting end (see \Cref{fig:ReducingCurve});
        
        \begin{figure}[H]
            \centering
            \includegraphics[width=3in]{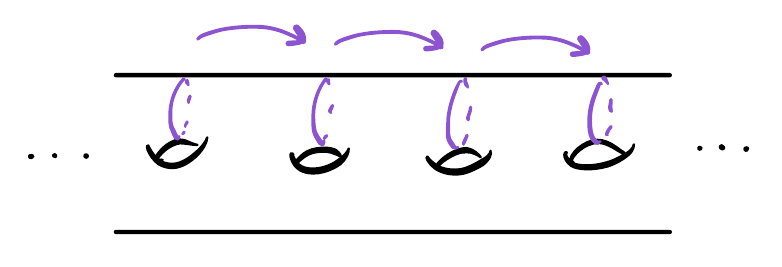}
            \caption{A reducing curve which is sent from a neighborhood of a repelling end into a neighborhood of an attracting end.}
            \label{fig:ReducingCurve}
        \end{figure}
        
        \item[(2)] \emph{AR-periodic lines}, i.e. a line $\ell$ with one end in a nesting neighborhood of an attracting end and the other end in a nesting neighborhood of a repelling end such that there exists $k \neq 0$ with $f^k(\ell) = \ell$ (see \Cref{fig:ReducingArc}); and
        
        \begin{figure}[H]
            \centering
            \includegraphics[width=2in]{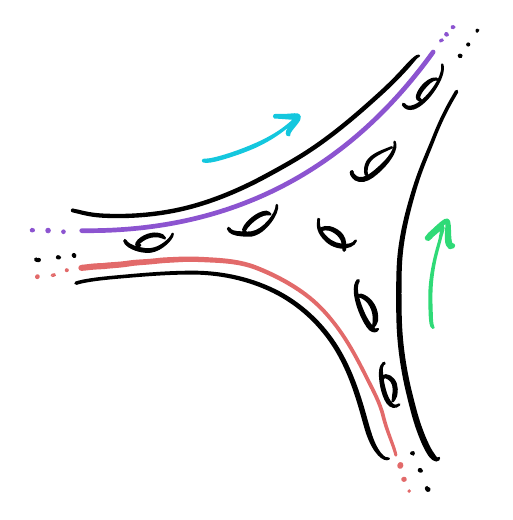}
            \caption{An AR-periodic line is shown in purple and a periodic line is shown in pink. The green and blue arrows indicate the direction the end-periodic homeomorphism is shifting in the ends.}
            \label{fig:ReducingArc}
        \end{figure}
        
        \item[(3)] \emph{periodic curves}, i.e. a simple closed curve $\gamma$ such that $f^n(\gamma) = \gamma$ for some $n \neq 0$ (see \Cref{fig:PeriodicCurve}).
        
        \begin{figure}[H]
            \centering
            \includegraphics[width=2.5in]{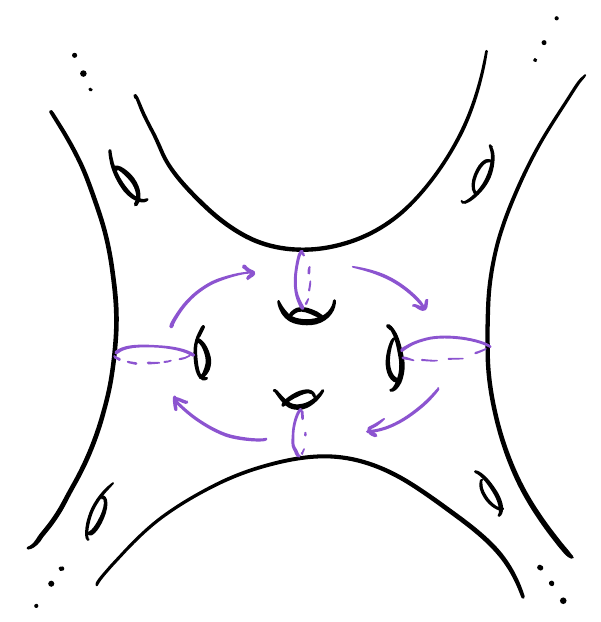}
            \caption{A periodic curve with order $4$.}
            \label{fig:PeriodicCurve}
        \end{figure}
        
    \end{enumerate}
    In \cite{EndPeriodic1}, we introduced the notion of \emph{strong} irreducibility, which disallows any periodic lines including those with both ends in attracting (resp. repelling) ends rather than only AR-periodic lines.
\end{definition}

\subsection{A plethora of examples} \label{subsec:examples}

We are able to give an infinite family of examples of both irreducible and strongly irreducible end-periodic homeomorphisms using a construction detailed in \cite[Section~6.1]{EndPeriodic1}. We say that a subsurface $C \subset S$ is \emph{separating} if there is no component of $S - C$ containing both an attracting and repelling end of $h$. It is \emph{fully separating} if every end of $S$ is contained in a unique component of $S - C$. See \Cref{fig:EndPeriodicExamples} for examples of separating versus fully separating subsurfaces.

\begin{proposition}[\cite{EndPeriodic1}]
    Let $h$ be a composition of disjoint handle shifts on $S$ and let $\rho$ be a partial pseudo-Anosov on a (fully) separating compact subsurface $C \subset S$. If $\rho$ acts with sufficient translation length on the curve complex of $C$ and $C$ has large enough complexity with respect to the amount that $h$ is shifting, then $f = h \rho$ is a (strongly) irreducible end-periodic homeomorphism.
\end{proposition}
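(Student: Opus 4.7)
My plan is to verify the definition of end-periodicity first and then rule out each of the obstructions to (strong) irreducibility using the separating hypothesis together with the translation length bound.

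For end-periodicity, note that $\rho$ is supported on the compact subsurface $C$ and $h$ is supported on a union of disjoint handle strips $T_1,\ldots,T_k$, so outside $C$ we have $f = h$. For each end $E$ of $S$, there is a unique handle strip $T_i$ containing a neighborhood $U_E$ of $E$ on which $h$ acts as a shift in the appropriate direction; after replacing $U_E$ by a smaller $h$-nesting neighborhood disjoint from $C$, the same $U_E$ is $f$-nesting. Thus every end of $S$ is either attracting or repelling under $f$, establishing end-periodicity, with the attracting/repelling behavior dictated by the direction of the relevant handle shift.

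For irreducibility, the unifying strategy is a dichotomy: any hypothetical reducing curve, AR-periodic line, or periodic simple closed curve $\alpha$ either (a) has an orbit all of whose representatives can be isotoped off of $C$, or (b) some iterate $f^n(\alpha)$ essentially intersects $C$. In case (a), the orbit is contained in a single component $W$ of $S - C$; the separating hypothesis says $W$ contains only attracting ends or only repelling ends of $h$, and $f|_W = h|_W$ is a shift in a single direction on each handle strip meeting $W$. Such a shift has infinite order on any essential curve meeting a handle strip, and it admits no AR-connections or reducing curves within $W$ because $W$ contains ends of only one type. For strong irreducibility, the fully separating hypothesis further refines this so that $W$ contains a single end, and hence no line with both ends attracting (or both ends repelling) can live in $W$ and be preserved by iterates of $f$.

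The main technical obstacle is case (b), where $\alpha$ records nontrivial information in the curve complex $\C(C)$. Here one projects the orbit $\{f^n(\alpha)\}$ to $\C(C)$ and uses a bounded geodesic image / Masur--Minsky style argument: because $\rho$ acts with translation length larger than a uniform constant depending on the shift lengths of the $T_i$ and the topology of $C$, successive applications of $f = h \rho$ move the subsurface projection by a definite positive amount, while the handle shifts can only perturb this projection by a bounded amount controlled by the geometric intersection of the handle strips with $C$. It follows that the projection of $f^n(\alpha)$ to $\C(C)$ grows unboundedly in $n$, contradicting $f^n(\alpha) = \alpha$ for periodic curves and AR-periodic lines, and also preventing $f^n(\alpha)$ from settling into any end neighborhood (which would force the projection to stabilize), thereby ruling out reducing curves. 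The delicate step, and the one pinning down the precise quantitative hypotheses of the proposition, is establishing the correct bound between the translation length of $\rho$ and the complexity of $C$ so that the projection estimate beats the handle-shift correction uniformly in $n$.
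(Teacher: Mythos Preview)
The paper does not prove this proposition; it simply quotes the statement from \cite{EndPeriodic1} (Section~6.1 there), accompanied only by a figure and the remark that strong irreducibility corresponds to $C$ being fully separating. There is therefore no in-paper argument to compare your proposal against.

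Your outline has the right architecture --- end-periodicity from the compact support of $\rho$, the (fully) separating hypothesis to handle objects disjoint from $C$, and subsurface projection to $\C(C)$ together with the translation-length hypothesis on $\rho$ for objects meeting $C$ --- and this is indeed the shape of the argument in \cite{EndPeriodic1}. One step in your case~(a), however, does not go through as written. You assert that if every iterate $f^n(\alpha)$ can be isotoped off $C$, then the whole orbit lies in a single component $W$ of $S\setminus C$, and you then argue entirely inside $W$. But since $C$ separates attracting ends from repelling ends, every handle strip must cross $C$, so $h$ does not preserve the components of $S\setminus C$; in particular a putative reducing curve necessarily changes components along its orbit (it begins near a repelling end and ends near an attracting end, which lie in distinct components). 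Your case~(b) treatment of reducing curves has a related gap: you argue that the projection to $\C(C)$ grows without bound, ``preventing $f^n(\alpha)$ from settling into any end neighborhood,'' but a reducing curve by definition eventually lands in an end neighborhood disjoint from $C$, at which point its subsurface projection becomes empty rather than large. The contradiction in the reducing-curve case has to come from a finite comparison of projections over the (bounded) range of iterates that actually meet $C$, not from an unbounded-growth argument.
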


An illustration of this construction can be found in \Cref{fig:RevisitExamples}. Note that \emph{strong} irreducibility of $f = h \rho$ corresponds to choosing a \emph{fully} separating compact subsurface $C$.

\begin{figure}
    \centering
    \includegraphics[width=.75\textwidth]{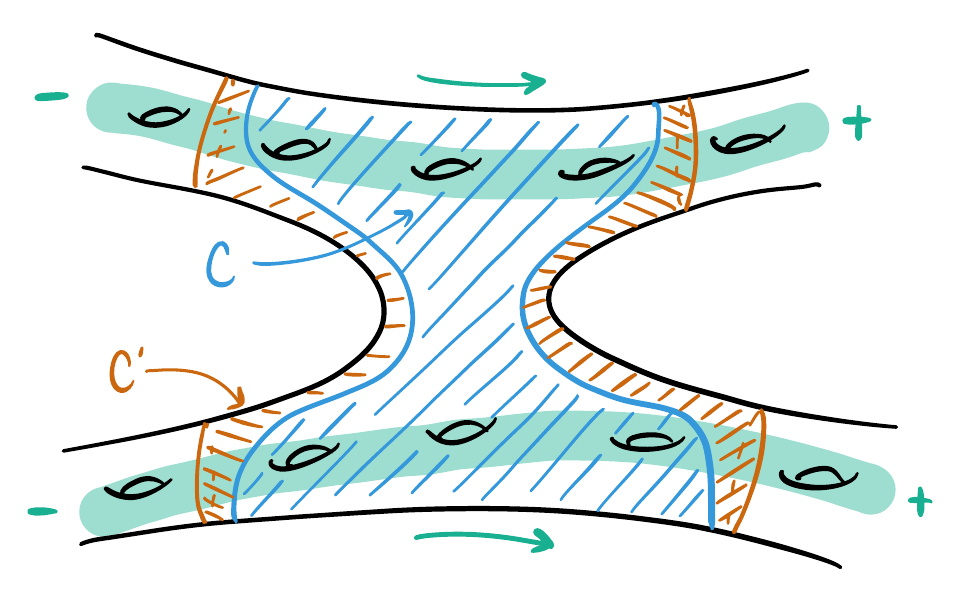}
    \caption{The subsurface $C$ (shown in blue) is separating and the subsurface $C'$ (shown in brown) is fully separating with respect to the handle shifts shown in green.}
    \label{fig:EndPeriodicExamples}
\end{figure}

\subsection{Structure of $\overline M_f$} \label{subsec:CompactStructure}

Our goal for this section is to use irreducibility (resp. strong irreducibility) of $f$ to prove $\overline M_f$ is atoroidal (resp. acylindrical). But first we must address an even more basic question, what actually is $\overline M_f$? We know it exists, but we have yet to discuss its structure. As the notation implies, $\overline M_f$ is a compactification of $M_f$. This is illustrated in \Cref{fig:Compactification} where the compactifying surfaces are denoted by $S_+$ and $S_-$. Where do these compactifying surfaces come from?

\begin{figure}
    \centering
    \includegraphics[width = .8\textwidth]{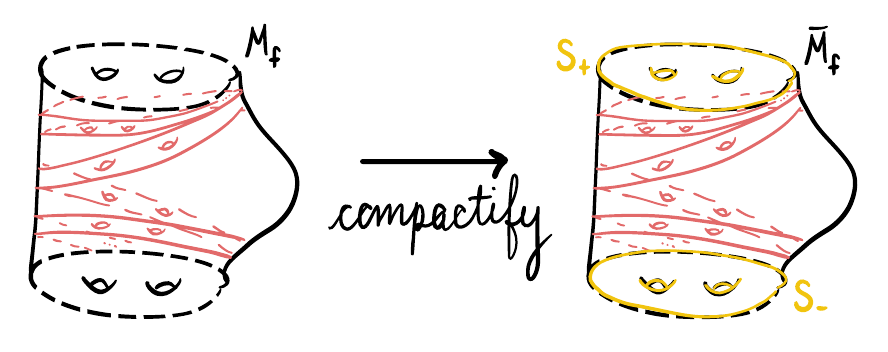}
    \caption{$\overline M_f$ shown as a compactification of $M_f$ with boundary surfaces $S_+$ and $S_-$.}
    \label{fig:Compactification}
\end{figure}

Let $U_-$ be the union of nesting neighborhoods of the repelling ends and let $\mathcal U_- = \bigcup_{n \geq 0} f^n(U_-)$. We call $\mathcal U_-$ the \emph{negative escaping set} for $f$. In the same way define the \emph{positive escaping set} for $f$ to be $\mathcal U_+ = \bigcup_{n \geq 0} f^{-n}(U_+)$ where $U_+$ is the union of nesting neighborhoods of the attracting ends. The following lemma appears (and is proved) in \cite[Lemma~2.4]{EndPeriodic1}.

\begin{lemma} \label{lem:boundary}
    $\mathcal U_+$ and $\mathcal U_-$ are $f$-invariant and $\left<f\right>$ acts freely, properly discontinuously, and cocompactly on each. Thus, the quotients $S_{\pm} = \mathcal U_{\pm} / \left<f\right>$ are closed, orientable surfaces. Furthermore, $\xi(S_+) = \xi(S_-),$ where $\xi(\Sigma_{g,n}) = 3g- 3 + n$ is the complexity. 
\end{lemma}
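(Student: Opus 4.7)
The plan is to handle the claims about $\mathcal U_-$; the case of $\mathcal U_+$ follows by symmetry (applying the argument to $f^{-1}$). First, I would establish $f$-invariance. The inclusion $f(\mathcal U_-) \subset \mathcal U_-$ is immediate from the definition. For the reverse, rewrite the repelling condition as $U_- \subsetneq f^m(U_-)$: this yields $f^{-1}(U_-) \subset f^{m-1}(U_-) \subset \mathcal U_-$, so $\mathcal U_- \subset f(\mathcal U_-)$. For freeness of the $\langle f \rangle$-action, suppose $f^k(x) = x$ for some $x \in \mathcal U_-$ and $k \neq 0$. After replacing $x$ by some $f^{-n}(x)$ (still $f^k$-fixed) and possibly $k$ by $-k$, I may assume $x \in U_E$ for some repelling end $E$ and $k > 0$. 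Then $x = f^{-jkm}(x) \in f^{-jkm}(U_E)$ for every $j \geq 0$. But $\{f^{-jkm}(U_E)\}_{j \geq 0}$ is a cofinal subsequence of the neighborhood basis of $E$, so $\bigcap_j f^{-jkm}(U_E) = \emptyset$, a contradiction.

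Next, I would construct a compact fundamental domain for the $\langle f^m \rangle$-action. Choosing $U_E$ so that $\overline{f^{-m}(U_E)} \subset U_E$, the closed set $A_E := \overline{U_E} \setminus f^{-m}(U_E)$ is a compact subsurface of $S$ with two boundary circles and some finite genus $g_E$, so $\chi(A_E) = -2g_E$. The $\langle f^m \rangle$-translates of $A_E^\circ$ are pairwise disjoint and, together with their boundary circles, exhaust $\bigcup_n f^{nm}(U_E)$. Thus the $\langle f^m\rangle$-quotient of this set is the closed orientable surface obtained from $A_E$ by identifying $\partial U_E$ with $\partial f^{-m}(U_E)$ via $f^m$. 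Taking the disjoint union over the finitely many repelling ends makes $\mathcal U_-/\langle f^m\rangle$ a compact surface. Since $\langle f\rangle$ acts freely on $\mathcal U_-$, the induced action of $\langle f\rangle/\langle f^m\rangle \cong \mathbb Z/m$ on this quotient is also free, so $S_- = \mathcal U_-/\langle f\rangle$ is a closed orientable surface. Proper discontinuity of $\langle f^m\rangle$ on $\mathcal U_-$ follows from the pairwise disjointness of the interiors of the translates of $A_E$; proper discontinuity of $\langle f\rangle$ follows because $\langle f^m\rangle$ has finite index.

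For the equality $\xi(S_+) = \xi(S_-)$, since $\xi = -\tfrac{3}{2}\chi$ for closed orientable surfaces (summed over components), it suffices to show $\chi(S_+) = \chi(S_-)$. Let $K := S \setminus (U_+ \cup U_-)$, a compact subsurface (arrange so that $U_+$ is disjoint from $f^m(U_-)$). Then $f^m(K) = S \setminus (f^m(U_+) \cup f^m(U_-))$ is homeomorphic to $K$, so $\chi(f^m(K)) = \chi(K)$. Using $f^m(U_+) \subsetneq U_+$ and $U_- \subsetneq f^m(U_-)$, one computes
\[
f^m(K) \setminus K \;=\; U_+ \setminus f^m(U_+) \;=:\; A^+ \qquad \text{and} \qquad K \setminus f^m(K) \;=\; f^m(U_-) \setminus U_- \;=:\; A^-.
\]
These meet $K \cap f^m(K)$ only along circles, so additivity of Euler characteristic yields $\chi(K) = \chi(K \cap f^m(K)) + \chi(A^-)$ and $\chi(f^m(K)) = \chi(K \cap f^m(K)) + \chi(A^+)$; subtraction gives $\chi(A^+) = \chi(A^-)$. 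Since identifying the two boundary circles of an $A_E$ preserves $\chi$, summing over ends gives $\chi(\mathcal U_\pm/\langle f^m\rangle) = \chi(A^\pm)$, and dividing by the free $\mathbb Z/m$-action yields $\chi(S_\pm) = \chi(A^\pm)/m$. Hence $\chi(S_+) = \chi(S_-)$.

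The main obstacle lies in the Euler characteristic comparison: one must identify the correct \emph{flux annuli} $A^\pm$ and use the homeomorphism $K \cong f^m(K)$ to force equality of their Euler characteristics. This encodes a conservation law---the homeomorphism $f^m$ pushes the same amount of topology into attracting-end neighborhoods as it releases out of repelling-end neighborhoods---which is what allows $\xi(S_+)$ and $\xi(S_-)$ to coincide despite $S_+$ and $S_-$ being potentially very different in their component structure.
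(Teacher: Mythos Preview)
The paper does not actually prove this lemma; it simply cites \cite[Lemma~2.4]{EndPeriodic1}. So there is no in-paper argument to compare against, and I will just assess your proposal on its own.

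Your argument is essentially correct, and the Euler-characteristic ``flux'' computation via the two cores $K$ and $f^m(K)$ is a clean way to obtain $\chi(S_+)=\chi(S_-)$. There is, however, one step you assert without justification: when you write ``Taking the disjoint union over the finitely many repelling ends makes $\mathcal U_-/\langle f^m\rangle$ a compact surface,'' you are using that
\[
\mathcal U_- \;=\; \bigsqcup_{E\ \text{repelling}} V_E,\qquad V_E:=\bigcup_{n\ge 0} f^{nm}(U_E),
\]
both as a set and as a disjoint union. Neither statement is automatic. Disjointness: if $x\in V_{E_1}\cap V_{E_2}$ then some $f^{-n_1 m}(x)\in U_{E_1}$ and $f^{-n_2 m}(x)\in U_{E_2}$; assuming $n_1\le n_2$ and using $f^{-m}(U_{E_1})\subset U_{E_1}$ forces $f^{-n_2 m}(x)\in U_{E_1}\cap U_{E_2}=\varnothing$. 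Coverage: one must show $f^r(U_E)\subset V_{f^r(E)}$ for $0\le r<m$, which uses that $\{f^{-km}(U_E)\}_k$ is a neighborhood \emph{basis} of $E$ (so $f^{-km}(U_E)\subset f^{-r}(U_{f^r(E)})$ for $k$ large). Once you record these two observations your fundamental-domain description goes through. Two cosmetic points: your phrase ``two boundary circles'' presumes each $\partial U_E$ is connected (arrange this, or drop the phrase---you never actually use the formula $\chi(A_E)=-2g_E$), and the additivity step in the $\chi$-computation should be phrased with closures so that the pieces meet along genuine circles.
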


Now to define $\overline M_f$. First we will identify the infinite cyclic cover, $M_{\infty}$, of $M_f$ dual to the fibration over $S_1$ with $p: S \times (-\infty, \infty) \to M_f$ together with the covering action $F(x,t) = (f(x), t-1)$. This has quotient $M_f$. 

We then define a partial compactification of $S \times (-\infty, \infty)$, denoted $\widetilde M_{\infty}$ inside of $S \times [-\infty, \infty]$ as follows \[\widetilde M_{\infty} = S \times (-\infty, \infty) \sqcup (\mathcal U_+ \times {\infty}) \sqcup (\mathcal U_- \times {-\infty}).\] By \Cref{lem:boundary}, $\mathcal U_+$ and $\mathcal U_-$ are $f$-invariant so $F(x, \pm \infty) = (f(x), \pm \infty)$ when $x \in \mathcal U_{\pm}$. Finally, we let $\overline M_f = \widetilde M_{\infty}/ \left< F\right>$ with $\partial \overline M_f = S_+ \sqcup S_-$.

We are now prepared to address our guiding goal for this section of using irreducibility (resp. strong irreducibility) of $f$ to prove $\overline M_f$ is atoroidal (resp. acylindrical).

\begin{proposition}[\cite{EndPeriodic1}]
    If $f: S \to S$ is an irreducible end-periodic homeomorphism, then $\overline M_f$ is atoroidal. 
\end{proposition}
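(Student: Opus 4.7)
The plan is to argue by contradiction: I will assume that $T \subset \overline{M}_f$ is an essential torus and produce an essential periodic simple closed curve for $f$, contradicting irreducibility.

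First, I will equip $\overline{M}_f$ with its depth-one foliation $\mathcal F$ whose non-compact leaves are copies of $S$ and whose only compact leaves are the boundary components $S_+$ and $S_-$. This foliation is taut (the mapping-torus circle direction in $M_f$ supplies the required transverse cycle), hence Reebless, so by \Cref{thm:Novikov} every leaf is $\pi_1$-injective in $\overline{M}_f$. Since $T$ is incompressible, case~(1) of \Cref{thm:RoussarieThurston} lets me isotope $T$ to be either a leaf of $\mathcal F$ or transverse to $\mathcal F$. The leaf case is immediate to rule out: the only compact leaves are $S_+$ and $S_-$, which are boundary components, so a torus leaf would be boundary-parallel and hence not essential. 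I may therefore assume $T$ is transverse to $\mathcal F$ and lies in the interior $M_f$.

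Next, I lift $T$ to the infinite cyclic cover $M_\infty = S \times \mathbb{R}$ of $M_f$ with deck generator $F(x,t) = (f(x), t-1)$. Let $\phi : \pi_1(M_f) \to \mathbb{Z}$ denote the fibration homomorphism. The restriction $\phi|_{\pi_1(T)} : \mathbb{Z}^2 \to \mathbb{Z}$ must be nonzero, because $\pi_1(S)$ is free (as $S$ is an open surface) and contains no $\mathbb{Z}^2$ subgroup; a vanishing restriction would lift $T$ to an incompressible torus inside $S \times \mathbb{R}$, which is impossible. Hence each component $A$ of the preimage of $T$ in $M_\infty$ is an infinite cylinder, invariant under some $F^d$ with $d > 0$. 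Transversality of $T$ to $\mathcal F$ makes the second-coordinate projection restrict to a proper submersion $A \to \mathbb{R}$ from $S^1 \times \mathbb{R}$ onto $\mathbb{R}$, which is necessarily a trivial circle bundle; in particular each slice $\gamma_t := A \cap (S \times \{t\})$ is a single embedded simple closed curve in $S$, and $\{\gamma_t\}_{t \in \mathbb{R}}$ is a smooth ambient isotopy of circles in $S$.

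From the identity $F^d(A) = A$ I will unpack $f^d(\gamma_t) = \gamma_{t-d}$, and the ambient isotopy supplied by the family shows that $\gamma_0$ and $f^d(\gamma_0)$ lie in the same isotopy class in $S$. Thus $\gamma_0$ is a periodic curve for $f$ in the standard (isotopy-class) sense. To verify essentiality, I use $\pi_1$-injectivity of the leaf to reduce to showing $\gamma_0$ is not contractible in $S$; if it were, a compressing disk for $T$ would arise, contradicting incompressibility. This essential periodic curve contradicts irreducibility of $f$, completing the proof. The step I expect to be the main obstacle is the bundle-theoretic analysis of the lift $A$: first ruling out that $T$ lifts as a torus (via freeness of $\pi_1(S)$) and then extracting the trivial circle-bundle structure on $A \to \mathbb{R}$ from transversality. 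Once that structure is in place, the dynamical conclusion, namely that $\gamma_0$ is periodic under $f$ up to isotopy, follows immediately from $F$-invariance of $A$.
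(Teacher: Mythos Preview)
Your argument is correct and follows the same strategy as the paper's sketch: put the essential torus in Roussarie--Thurston normal form with respect to the depth-one foliation, rule out the leaf case, and read off a periodic curve from the intersection with a fiber. The paper phrases the last step as ``cut $\overline M_f$ along $S\times\{0\}$,'' whereas you pass to the infinite cyclic cover and analyze the lifted cylinder via a proper-submersion/Ehresmann argument; these are the same maneuver, and your version simply fills in the details the sketch omits.
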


\begin{proof}[Sketch of Proof]
    Suppose $\overline M_f$ is not atoroidal. Then there exists an incompressible torus $T$ which is not boundary parallel. By \Cref{thm:RoussarieThurston}, we can arrange $T$ to be transverse to the fiber $S \times \{0\}$. Cutting $\overline M_f$ along $S \times \{0\}$ we see that $T$ gives rise to a periodic curve which contradicts irreducibility. See \Cref{fig:Atoroidal}.
\end{proof}

\begin{figure}[ht]
    \centering
    \includegraphics[width = .5\textwidth]{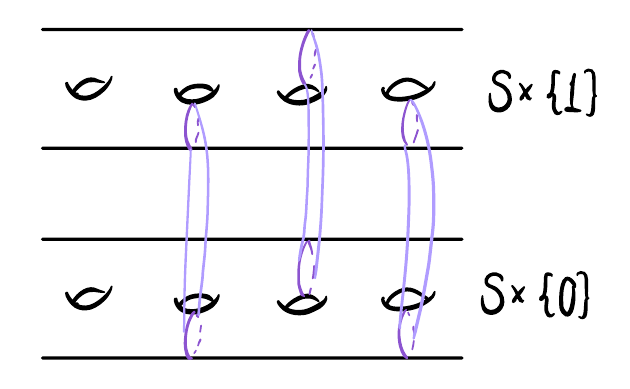}
    \caption{An incompressible torus giving rise to a periodic curve.}
    \label{fig:Atoroidal}
\end{figure}

Note that irreducibility is in fact a stronger hypothesis than is needed to ensure $\overline M_f$ is atoroidal since only periodic curves will give rise to incompressible tori in $\overline M_f$. As a consequence of this, end-periodic homeomorphisms without periodic curves are often called \emph{atoroidal}. See, for example, \cite{LandryMinskyTaylor2023, Whitfield}.

\begin{proposition}[\cite{EndPeriodic1}]
    If $f:S \to S$ is a strongly irreducible end-periodic homeomorphism, then $\overline M_f$ is acylindrical.
\end{proposition}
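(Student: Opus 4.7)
The plan is to mimic the atoroidal argument: assume $\overline M_f$ is not acylindrical, produce an essential annulus $A$, and use it to manufacture a periodic line in $S$ that is forbidden by strong irreducibility. Suppose $A \subset \overline M_f$ is a properly embedded, essential annulus with $\partial A \subset S_+ \sqcup S_-$. The fibration of $M_f$ over $S^1$ extends to a taut foliation $\mathcal F$ of $\overline M_f$ whose boundary leaves are $S_\pm$. Applying \Cref{thm:RoussarieThurston}(2), I would first isotope $A$ (rel $\partial A$) so that it is transverse to $\mathcal F$ except at finitely many circle tangencies in the interior of $\overline M_f$.

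Next, I would pass to the infinite cyclic cover and consider a lift $\widetilde A$ of $A$ in $\widetilde M_\infty$. Its boundary lies in $\mathcal U_+ \times \{\infty\} \sqcup \mathcal U_- \times \{-\infty\}$, and its stabilizer in $\langle F \rangle$ is the image of $\pi_1(A) \to \langle F \rangle \cong \mathbb Z$, which is either trivial (so $\widetilde A$ is a compact annulus) or a subgroup $\langle F^k \rangle$ (so $\widetilde A$ is a bi-infinite strip covering $A$). Slicing $\widetilde A$ by a regular fiber $S \times \{t_0\}$ produces a properly embedded $1$-manifold in $S$; I would select a component $\ell$ whose closure limits out to a boundary arc of $\widetilde A$ inside $\mathcal U_+ \times \{\infty\}$ or $\mathcal U_- \times \{-\infty\}$. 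By construction $\ell$ is a properly embedded line in $S$ whose ends exit into the nesting neighborhoods of the ends of $S$ prescribed by $\partial A$. The $F^k$-invariance of $\widetilde A$ (or, in the compact case, the fact that the cycle of slices of $\widetilde A$ returns to itself under a power of $F$) upgrades $\ell$ to a line preserved by some power of $f$, producing a periodic line in $S$.

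The proof then concludes with a case analysis on which of $S_\pm$ carry the boundary components of $A$. If both lie in $S_+$ (respectively $S_-$), then $\ell$ is a periodic line with both ends in attracting (respectively repelling) nesting neighborhoods: such a configuration is explicitly forbidden by \emph{strong} irreducibility, though permitted by ordinary irreducibility, and this is precisely the strengthening introduced in \cite{EndPeriodic1}. If one boundary lies on each of $S_\pm$, then $\ell$ is an AR-periodic line, already forbidden by ordinary irreducibility. In every case the existence of $A$ contradicts strong irreducibility of $f$.

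The main obstacle will be managing the circle tangencies that appear in the annulus case of Roussarie--Thurston but not in the torus case of the preceding proposition, where $T$ was everywhere transverse to $\mathcal F$. Across such a tangency the topology of the slice $\widetilde A \cap (S \times \{t\})$ changes via a Morse-type surgery, so care will be needed to choose $t_0$ and the component $\ell$ in order to obtain a genuine properly embedded line in $S$ rather than a bounded arc or a closed curve. A related subtlety is verifying that essentiality of $A$—in particular boundary-incompressibility—forces the resulting $\ell$ to be non-peripheral enough to detect the end structure required for the contradiction with strong irreducibility.
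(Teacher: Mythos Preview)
Your overall strategy---Roussarie--Thurston normal form, then lift to $\widetilde M_\infty$ and analyze---is exactly the route the paper sketches. The gap is in your case analysis of the lift: you only produce periodic lines, but the paper is explicit that essential annuli in $\overline M_f$ come from \emph{two} sources, reducing curves \emph{and} periodic lines, and your argument misses the first.

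Concretely, when the stabilizer of $\widetilde A$ in $\langle F\rangle$ is trivial, $\widetilde A$ is a compact annulus in $\widetilde M_\infty$. Its intersection with a fiber $S\times\{t_0\}$ is then a compact $1$-manifold with no boundary, i.e.\ a union of simple closed curves, not lines. There is no component $\ell$ that ``limits out to a boundary arc of $\widetilde A$'': the boundary of $\widetilde A$ consists of closed curves in $\mathcal U_\pm\times\{\pm\infty\}$, not arcs. And your recovery mechanism---``the cycle of slices of $\widetilde A$ returns to itself under a power of $F$''---is false here, since with trivial stabilizer the $F$-translates of $\widetilde A$ are pairwise disjoint. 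What actually happens in this case, when $\partial A$ meets both $S_+$ and $S_-$, is that the slice is a simple closed curve which is isotopic in $S$ into both an attracting nesting neighborhood (via the top boundary of $\widetilde A$) and a repelling one (via the bottom), i.e.\ a \emph{reducing curve}. This is condition (1) in the definition of irreducibility, not a periodic line, and your proof never invokes it. The paper's \Cref{fig:Acylindrical} and its caption make exactly this dichotomy: a cylinder running from an $S_-$-component to an $S_+$-component may come from either an AR-periodic line or a reducing curve, and one distinguishes them only after lifting and seeing whether $\widetilde A$ is a strip or a compact annulus.
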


The argument for aclyindricity is more subtle than the one we used to show that $\overline M_f$ is atoroidal since there are two ways for cylinders to arise: 1) reducing curves, and 2) periodic lines (this is where strong irreducibility is needed!). The vague idea is that you once again apply \Cref{thm:RoussarieThurston} to put cylinders into their normal form and then lift to $\widetilde M_{\infty}$ to analyze these cylinders more carefully. You can see some of the different potential configurations of cylinders in \Cref{fig:Acylindrical}.

\begin{figure}[ht]
    \centering
    \includegraphics[width = .8\textwidth]{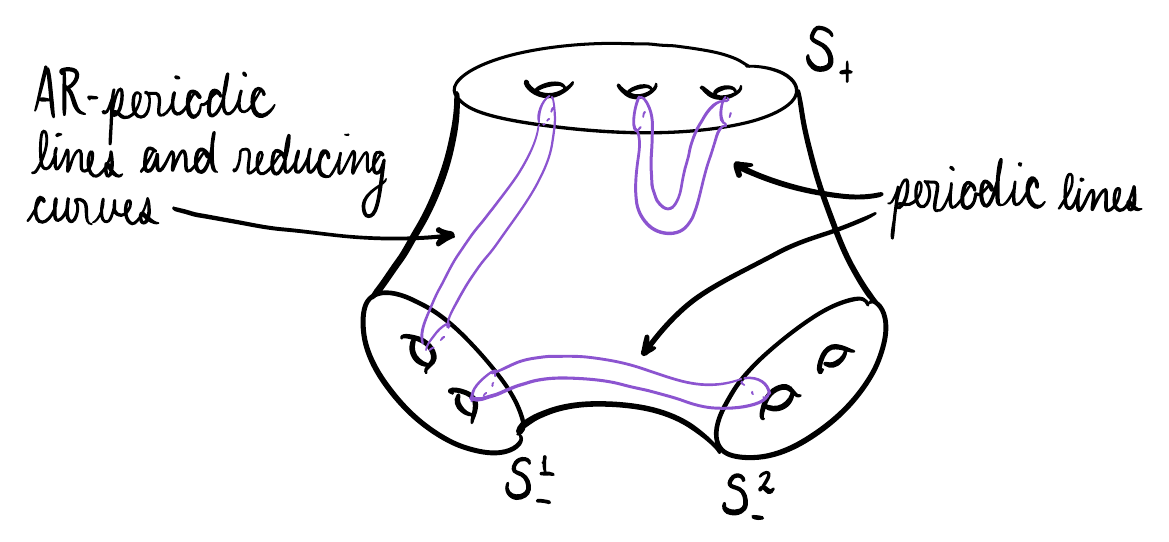}
    \caption{The cylinder on the left is labeled ``AR-periodic lines and reducing curves" because it could arise from either phenomenon since it runs from a component of $\partial \overline M_f$ corresponding to a repelling end of $f: S \to S$ to a component of $\partial \overline M_f$ corresponding to an attracting end of $f: S \to S$. Whether a cylinder arises from an AR-periodic line or a reducing curve can be seen after lifting to $\widetilde M_{\infty}$. The cylinders on the right are labelled as ``periodic lines" since they correspond to periodic lines running between end of the same type. }
    \label{fig:Acylindrical}
\end{figure}

\section{A lower bound on volume} \label{sec:lower-bound}

Given a taut \emph{depth-one} foliation we are now prepared to address \Cref{q:taut-geometry}! In particular, as we saw in the previous section, we can say a lot about the geometry of $\overline M_f$. It turns out we can do more than simply determine when $\overline M_f$ admits a hyperbolic metric, we can actually give bounds on the volume of that hyperbolic metric in terms of the action of $f$ on the \emph{pants graph}. The \emph{pants graph} $\mathcal P(S)$ of a surface $S$ is a graph with vertices corresponding to pants decompositions of $S$ and edges corresponding to \emph{elementary moves}. These elementary moves come in two flavors as illustrated in \Cref{fig:ElementaryMoves}. We will call an elementary move that takes place in a one-holed torus a \emph{T-move} and an elementary move that takes place in a four-holed sphere an \emph{S-move}. We equip the pants graph with a slightly different path metric than the usual one. In particular, we endow edges corresponding to $S$-moves with length $2$ and those corresponding to $T$-moves with length $1$.

\begin{figure}
    \centering
    \includegraphics[width = 0.5\textwidth]{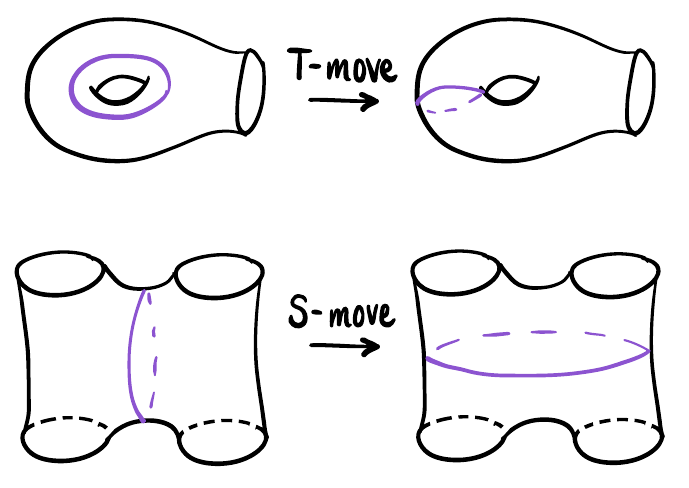}
    \caption{Two vertices in the pants graph are connected by an edge if their associated pants decompositions are related by one of the two elementary moves shown here.}
    \label{fig:ElementaryMoves}
\end{figure}

The upper bound in the below theorem was proved by Field--Kim--Leininger--Loving \cite{EndPeriodic1} and the lower bound was proved by Field--Kent--Leininger--Loving \cite{EndPeriodic2}. 

\begin{theorem} \label{thm:VolumeBounds}
    Suppose $f: S \to S$ is a strongly irreducible end-periodic homeomorphism. Then \[c_2 \cdot \tau(f) \leq \Vol (\overline M_f) \leq c_1 \cdot \tau(f)\] where $c_1$ is the volume of an ideal octahedron, $c_2$ is a constant depending only on the \emph{capacity} of $f$, and $\tau(f)$ is the \emph{asymptotic translation length} of $f$ on the pants graph $\mathcal P(S)$.
\end{theorem}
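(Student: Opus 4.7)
The plan is to attack the two bounds separately by adapting Brock's original finite-type argument to the end-periodic setting, using the structural description of $\overline M_f$ developed in \Cref{sec:end-periodic-mapping-tori}.

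For the upper bound $\Vol(\overline M_f) \leq c_1 \cdot \tau(f)$, I would first pick a pants decomposition $P$ of $S$ together with a path $P = P_0, P_1, \ldots, P_n = f(P)$ in $\mathcal P(S)$ whose weighted length is close to $\tau(f)$. For each elementary move from $P_i$ to $P_{i+1}$, the two pants decompositions cobound a ``Brock block'': a 3-manifold that embeds as a drilling inside an ideal polyhedron (an ideal tetrahedron for a T-move, an ideal octahedron for an S-move). Gluing these blocks end-to-end along the monodromy $f$ produces a drilling of $\overline M_f$ along a disjoint union of closed curves. Since drilling only increases hyperbolic volume (Thurston), summing the block volumes gives
\[
\Vol(\overline M_f) \leq \sum_{i=0}^{n-1} \Vol(\text{block}_i) \leq c_1 \cdot \tau(f),
\]
where the per-move volume bound and the edge weights ($1$ for T-moves, $2$ for S-moves) are calibrated so that each elementary move contributes volume at most $c_1$ times its weight. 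The non-compactness of the fiber $S$ is handled by carrying out the construction only on the compact ``core'' picked out by the compactification: outside a large compact subsurface the monodromy is periodic on nesting neighborhoods, and the associated pieces of the infinite cyclic cover $\widetilde M_\infty$ are absorbed into the compactifying surfaces $S_\pm$.

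For the lower bound $c_2 \cdot \tau(f) \leq \Vol(\overline M_f)$, I would mimic Brock's strategy of extracting an efficient pants-graph path from the hyperbolic structure on $\overline M_f$. Sweep $\overline M_f$ by a one-parameter family of surfaces interpolating between the boundary components $S_+$ and $S_-$ (guided by the depth-one foliation picture from \Cref{sec:intro}). For each slice in the sweep-out, extract a short pants decomposition on a large compact truncation (in the spirit of a Bers pants decomposition) and control how this pants decomposition changes as one moves through the sweep-out. The quantitative input is a Margulis-tube and thick-thin argument bounding the volume needed to pass between two pants decompositions differing by a single elementary move, together with a Masur--Minsky-style comparison to translate this into pants-graph distance. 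Summing produces the desired bound. The constant $c_2$ depends on the capacity of $f$ because the capacity quantifies how efficiently short curves can be produced near the compactifying boundary, which in turn bounds the proportion of the sweep-out volume spent on peripheral rather than productive moves.

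The main obstacle is the lower bound. In the finite-type setting Brock can quote a single quasi-isometry between volume, Weil--Petersson translation length, and pants-graph translation length. In the end-periodic setting the fibers are infinite-type surfaces with no intrinsic Weil--Petersson geometry, so one must localize the Brock argument to the compact productive region of each slice and then show that the volume contributed by the peripheral ends of $\overline M_f$ is genuinely controlled by the capacity rather than escaping to infinity. Carefully handling the transition between the infinite-type fiber $S$ and the finite-type compactifying surfaces $S_+ \sqcup S_-$, and keeping track of uniform constants on the core while absorbing end behavior into the capacity factor, is where the bulk of the technical work in \cite{EndPeriodic2} lives.
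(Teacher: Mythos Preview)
Your upper-bound sketch is essentially the paper's approach (Agol's block decomposition followed by Dehn-filling monotonicity and a passage to powers), modulo two corrections. First, the block volumes are both measured in octahedra: the $T$-block has volume $V_{\mathrm{oct}}$ and the $S$-block has volume $2V_{\mathrm{oct}}$, which is precisely why $S$-edges carry weight $2$; there is no ideal tetrahedron here. Second, the paper does not simply ``glue blocks end-to-end.'' It builds a quotient $h:\overline M_f\to\widehat M_f$ by collapsing maximal flow-line arcs outside the block regions, and then invokes Armentrout's theorem to identify $\widehat M_f$ with $\overline M_f$ up to homeomorphism. Your remark that the ends are ``absorbed into the compactifying surfaces'' is morally right, but the mechanism is this flow-collapse, not a direct embedding.

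Your lower-bound plan has a genuine gap. You propose sweeping $\overline M_f$ by surfaces interpolating between $S_+$ and $S_-$, but these are two distinct closed surfaces that are not isotopic in $\overline M_f$, so there is no sweep-out of that form. The paper instead works with the infinite-type fiber $S$ itself: it introduces \emph{well-pleated surfaces}, pleated maps $\varphi:(S,\sigma)\to\overline M_f$ homotopic to the inclusion whose pleating locus lies in a pants-lamination and which send $S\smallsetminus Y$ into $\partial\overline M_f$ for a fixed core $Y$. The interpolation runs between two such well-pleated surfaces in $\widetilde M_\infty$, differing by a power of the deck transformation. The capacity enters through a concrete lemma: for a \emph{minimal} core $Y$, one has $\ell_\sigma(\partial Y)\le 2\pi|\chi(Y)|/\beta$, where $\beta=\beta(\xi(f))$ is Basmajian's collar width; this is what produces bounded-length pants on an ever-growing subsurface. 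Finally, the paper does not adapt Brock's limiting argument but avoids it entirely: after passing to a uniform power $K\ge 5|\chi(f)|$, irreducibility guarantees that no two of the bounded-length curves found in $\widetilde M_\infty$ project to the same curve in $\overline M_{f^K}$, so the count of bounded-length curves directly bounds pants-graph distance. Your ``peripheral versus productive'' heuristic does not substitute for these two ingredients.
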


We will define and discuss the \emph{capacity} of $f$ at some length, but for now it is enough to say that it captures the same type of information as the notion of topological complexity does for a finite-type surface. The \emph{asympototic translation length} of $f$ on $\mathcal P(S)$ is defined as \[\tau(f) = \inf_{P \in \mathcal P(S)} \liminf_{n \to \infty} \frac{d(P, f^n(P))}{n}.\] It is straightforward to show that $\tau(f^n) = n \cdot \tau(f)$ for all $n \geq 1$. The astute reader might note that $\mathcal P(S)$ is not connected when $S$ is infinite type. See \Cref{fig:PantsDecomp} for an example of two pants decompositions which are in different connected components of $\mathcal P(S)$. Thus, we really are taking this infimum over the union of connected components for which $d(P, f^n(P))$ is finite for some $n > 0$. Note that \Cref{thm:VolumeBounds} is an infinite-type analogue of the following theorem of Brock.

\begin{figure}
    \centering
    \includegraphics[width = \textwidth]{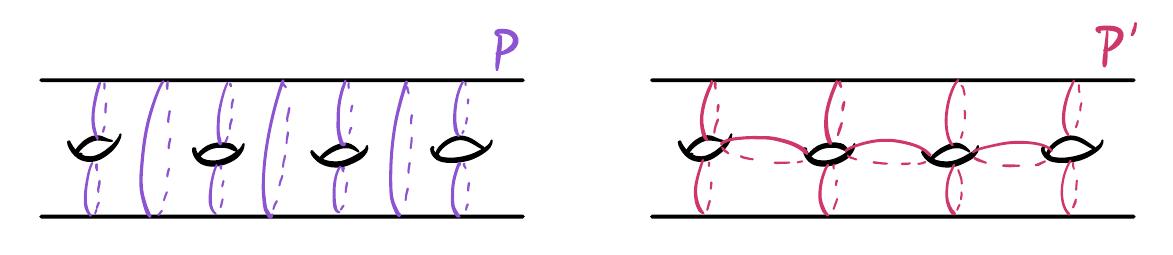}
    \caption{Let $S$ be the ladder surface. Then $P$ and $P'$ are in different connected components of $\mathcal P(S)$.}
    \label{fig:PantsDecomp}
\end{figure}

\begin{theorem}[\cite{Brock-mappingtorus-vol}] \label{thm:Brock}
    Let $f: \Sigma \to \Sigma$ be a pseudo-Anosov homeomorphism. There exists constants $K_1, K_2$ depending only on the topology of $\Sigma$ such that \[K_1 \cdot \tau(f) \leq \Vol(M_f) \leq K_2 \cdot \tau(f).\]
\end{theorem}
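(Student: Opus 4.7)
The plan is to route the proof through the Weil--Petersson metric on Teichm\"uller space. My first step is to invoke Brock's quasi-isometry between the pants graph $\mathcal P(\Sigma)$ and $(\Teich(\Sigma), d_{WP})$: a pants decomposition $P$ is sent to any point of a Bers-type region in which every curve of $P$ has bounded hyperbolic length. This quasi-isometry is $\Map(\Sigma)$-equivariant, so the pants-graph asymptotic translation length $\tau(f)$ is comparable, up to multiplicative constants depending only on $\Sigma$, to the Weil--Petersson translation length $\tau_{WP}(f)$ of $f$ on $\Teich(\Sigma)$. It then suffices to show that $\Vol(M_f)$ is bi-Lipschitz to $\tau_{WP}(f)$.

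For the upper bound, I would sweep $M_f$ by simplicial hyperbolic surfaces. Take a pants-graph geodesic from $P$ to $f(P)$ of length $N$ comparable to $\tau(f)$. Each vertex of this geodesic produces a singular hyperbolic surface realized in $M_f$, and between consecutive elementary moves one can interpolate by a surface cobounding a region of uniformly bounded volume, a ``standard piece'' whose size depends only on the move type (T-move or S-move) and on $\Sigma$. Closing up by the $f$-action and applying the area/volume comparison for simplicial hyperbolic surfaces yields $\Vol(M_f) \leq K_2 \cdot \tau(f)$.

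For the lower bound, I would approximate $M_f$ by the quasi-Fuchsian manifolds $Q_n = Q(X, f^n(X))$ for a fixed $X \in \Teich(\Sigma)$. The deep half of Brock's convex-core theorem gives $\Vol(\mathrm{core}(Q_n)) \gtrsim d_{WP}(X, f^n(X)) \sim n \cdot \tau_{WP}(f)$. As $n \to \infty$, the $Q_n$ converge geometrically to the infinite cyclic cover $\widetilde M_f$ of the mapping torus (this is the doubly degenerate limit predicted by Thurston's ending lamination picture), and a long middle window of $\mathrm{core}(Q_n)$ approximates $n$ translates of a compact fundamental domain for the deck action on $\widetilde M_f$. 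Hence $n \cdot \Vol(M_f) \gtrsim n \cdot \tau_{WP}(f)$ up to bounded additive error, and dividing through gives $\Vol(M_f) \geq K_1 \cdot \tau(f)$.

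The hard part is the lower bound. The upper bound, once the pants-graph-to-Teichm\"uller dictionary is in hand, reduces to an essentially combinatorial filling construction with a straightforward volume tally. The lower bound, in contrast, demands that an intrinsic hyperbolic-volume quantity be translated back into a coarse Teichm\"uller-geometric invariant, and this rests on the delicate geometric convergence of Kleinian surface groups together with a careful identification of thick-part fundamental domains inside the convex cores of the $Q_n$.
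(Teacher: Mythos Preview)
Your outline is in the right neighborhood, but it diverges from the paper's presentation in both halves, and the lower bound contains a genuine gap.

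For the upper bound, the paper does not use a simplicial-hyperbolic sweepout. It follows Agol's block decomposition: a geodesic $P=P_0,\dots,P_n=f^{-1}(P)$ in $\mathcal P(\Sigma)$ is realized by explicit pants blocks $\mathcal B^T$ and $\mathcal B^S$ (quotients of $\Sigma_{1,1}\times I$ and $\Sigma_{0,4}\times I$), each carrying a convex hyperbolic metric of volume $V_{\oct}$ or $2V_{\oct}$ with totally geodesic thrice-punctured-sphere boundary. Gluing and Dehn-filling recovers $M_f$, and Thurston's Dehn surgery theorem gives $\Vol(M_f)\le V_{\oct}\,\tau(f)$. Your ``standard pieces'' idea is morally the same, but the block approach buys an explicit constant and a clean filling argument rather than an area/volume comparison.

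For the lower bound, the paper's sketch (following Brock) stays in the pants graph and in the infinite cyclic cover $\Sigma\times\mathbb R$ of $M_f$: one interpolates through simplicial hyperbolic surfaces, harvests a sequence of bounded-length pants decompositions, and uses that each short curve carries a Margulis tube of definite volume in $M_f$. The only subtlety is that many curves upstairs may project to the same curve in $M_f$; this is handled by passing to powers of $f$ and a limiting argument. Your route is different: you invoke the convex-core inequality $\Vol(\mathrm{core}\,Q_n)\gtrsim n\,\tau_{WP}(f)$ as a black box and then try to convert it into a bound on $\Vol(M_f)$ via geometric convergence $Q_n\to\widetilde M_f$.

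The gap is precisely that conversion. To conclude $n\,\Vol(M_f)\gtrsim n\,\tau_{WP}(f)$ you need $\Vol(\mathrm{core}\,Q_n)\le n\,\Vol(M_f)+o(n)$, and geometric convergence---which is convergence on compacta---does not give this. Your sentence ``a long middle window of $\mathrm{core}(Q_n)$ approximates $n$ translates of a compact fundamental domain'' is exactly the hard statement: making it precise requires either the bilipschitz model machinery (far heavier than anything Brock used) or, in effect, redoing the bounded-length-curve count inside $Q_n$ and matching it with the count in $M_f$. That count is what the paper's sketch performs directly in the cyclic cover, so the Weil--Petersson detour, while harmless, does not actually let you bypass it.
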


Before beginning to discuss the proof of the lower bound in \Cref{thm:VolumeBounds} it will be illuminating to briefly sketch Brock's proof of the lower bound in \Cref{thm:Brock}. My goal is to first highlight the big ideas of his proof, then note where some key complications arise when attempting to adapt it the infinite-type setting and, finally, to introduce the notion of \emph{well-pleated surfaces}.

\subsection{Brock's lower bound} Let $f: \Sigma \to \Sigma$ be a pseudo-Anosov homeomorphism and $M_f$ its corresponding hyperbolic mapping torus.

\medskip

\noindent \underline{Big Ideas of Proof:}

\begin{enumerate}
    \item[1)] The driving idea is that bounded length curves in $M_f$ contribute a definite amount to the hyperbolic volume of $M_f$. This is illustrated in \Cref{fig:MargulisTube}.
    \item[2)] Thus, a lower bound on the number of bounded length curves will give a lower bound on the volume of $M_f$.
    \item[3)] Hence, obtaining a lower bound on the number of bounded length curves in terms of a path in $\mathcal P(\Sigma)$ will give us the desired result.
\end{enumerate}     

\begin{figure}
    \centering
    \includegraphics[width = \textwidth]{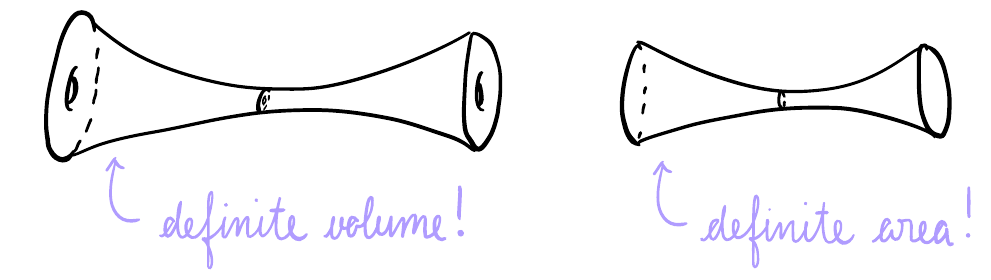}
    \caption{Recall that in a surface the collar lemma tells us that a short simple closed curve lives in a collar with definite area. The analogue of this in $3$-dimensions is the Margulis lemma which tells us that a short curve in a $3$-manifold is contained in a solid torus, called a Margulis tube, with definite volume.}
    \label{fig:MargulisTube}
\end{figure}

Of course this sketch of proof begs the question, how does Brock find these bounded length curves in $M_f$ to begin with? The short answer is that he uses an interpolation through \emph{simplicial hyperbolic surfaces} \cite{Bonahon, Canary.CoveringTheorem}. Roughly speaking, a simplicial hyperbolic surface is a path-isometric mapping from a singular hyperbolic surface to a hyperbolic $3$-manifold that is totally geodesic in the complement of a triangulation, and an ``interpolation" is a one-parameter family of such maps. 

This interpolation takes place in the infinite cyclic cover, $\Sigma \times \mathbb R$, of $M_f$ as shown in \Cref{fig:BrockPleatedSurfaces}. Why do we pass to $\Sigma \times \mathbb R$? Brock understands this picture very well since it puts us in the setting of convex cores of quasi-Fuchsian hyperbolic $3$-manifolds \cite{Brock-convex-vol}. If we start with a bounded length pants decomposition on the initial surface, then this interpolation produces a whole sequence of bounded length pants decompositions. Next, Brock shows that the number of bounded length curves coming from this sequence gives an upper bound on distance in the pants graph between the bounded length pants decomposition on the initial surface in the interpolation and the pants decomposition on the final surface, and hence on $\tau(f)$.

\begin{figure}
    \centering
    \includegraphics[width = .9\textwidth]{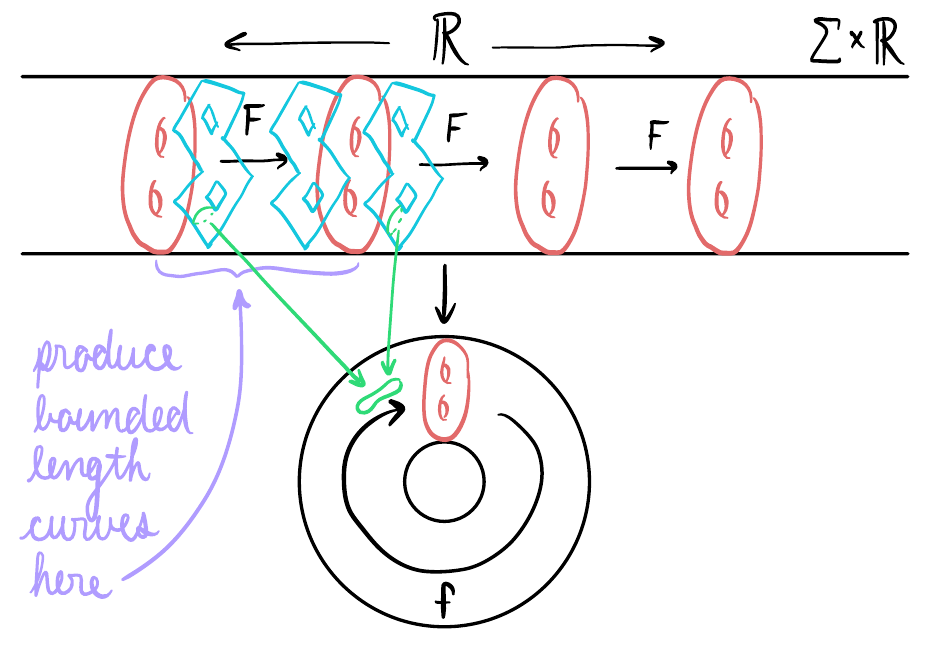}
    \caption{The interpolation through simplicial hyperbolic surfaces is shown by the crumpled blue surfaces in $\Sigma \times \mathbb R$ where the covering transformation is given by $F(x, t) = (f(x), t+1)$.}
    \label{fig:BrockPleatedSurfaces}
\end{figure}

\medskip

\noindent \underline{Difficulty:} The interpolation overlaps with its image under the covering transformation $F$, which means that many curves in this sequence could project to the same curve in $M_f$. This is illustrated by the green curves in \Cref{fig:BrockPleatedSurfaces}.

\medskip

\noindent \underline{Solution:} Pass to powers and apply a limiting argument!

\subsection{Apply Brock's argument to the infinite-type setting}

Our goal in this section will be to replicate Brock's argument in the infinite-type setting! The primary driving idea that bounded length curves contribute definite volume still applies. However, passing to powers in order to get a lower bound on volume in terms of the number of bounded length curves presents some issues in the infinite-type setting. In addition, the lower bound on the number of bounded length curves in terms of a path in $\mathcal P(\Sigma)$ given by Brock depends on $\chi(\Sigma)$ which no longer makes sense for an infinite-type surface. 

We will first address this second issue by developing a notion of complexity for $f: S \to S$ an end-periodic homeomorphism. Let's return to our family of examples from \Cref{subsec:examples}, which we recall in \Cref{fig:RevisitExamples}.

\begin{figure}
    \centering
    \includegraphics[width = \textwidth]{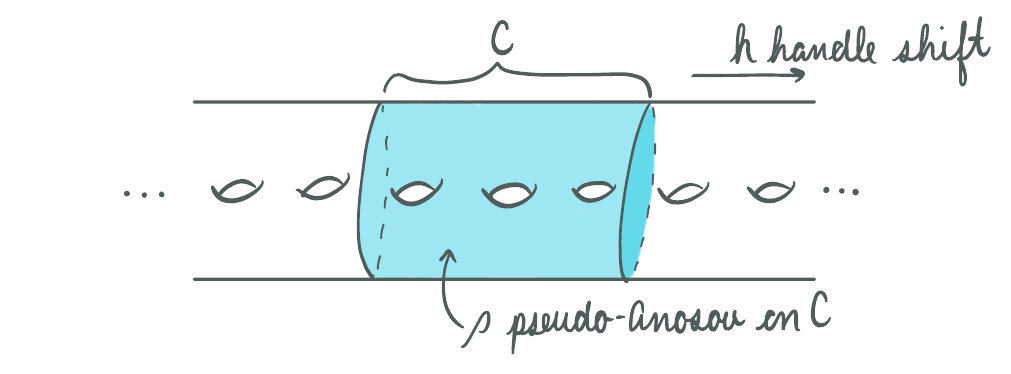}
    \caption{The homeomorphism $f = h \rho$ is strongly irreducible so long as $\rho$ acts with sufficiently large translation length on the curve complex of the subsurface $C$.}
    \label{fig:RevisitExamples}
\end{figure}

There are two immediate pieces of information that are key to the structure of these examples:

\begin{enumerate}
    \item[1)] how much is $h$ shifting by?
    \item[2)] how big is $C$?
\end{enumerate}

It turns out that the amount that $h$ is shifting by is easily gleaned by the complexity of the boundary of $\overline M_f$, which we denote by $\xi(f)$. Note that \[\xi(f) = \xi(\partial \overline M_f) = \xi(S_+) + \xi(S_-) = 2 \xi(S_+).\] We call $\xi(f)$ the \emph{end complexity} of $f$. 

On the other hand, ``how big is $C$?" is not quite the right thing to measure! Instead we need to ask the following,

\begin{enumerate}
    \item[2')] what is the size of the smallest subsurface of $S$ where stuff gets ``trapped", i.e. does not escape into either the attracting or repelling ends under forward or backward iteration of $f$? This is illustrated in \Cref{fig:FlatCoveringPicture}. 
\end{enumerate}

\begin{figure}
    \centering
    \includegraphics[width = .8\textwidth]{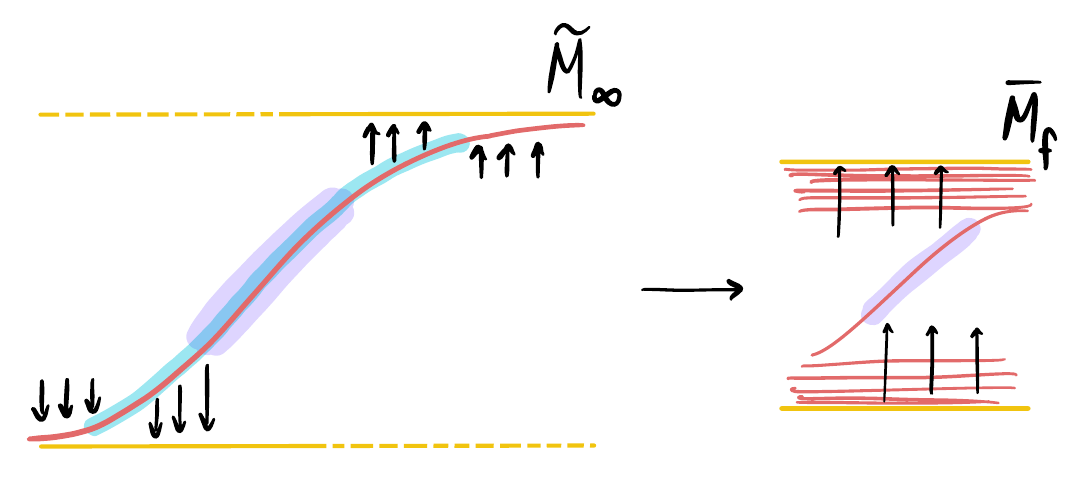}
    \caption{An illustration (drawn down a dimension) of the parts of a fiber $S$ in $\overline M_f$ that can't be pushed into $\partial \overline M_f$.}
    \label{fig:FlatCoveringPicture}
\end{figure}

But how do we quantify this? First recall the notation we established in \Cref{subsec:CompactStructure} where $U_+$ (resp. $U_-$) is the union of nesting neighborhoods of the attracting ends (resp. repelling ends) of $S$ with respect to $f$. We will call $Y = S - (U_- \cup U_+)$ a \emph{core} for $f$ and define the \emph{core characteristic} of $f$ to be the size of a minimal core for $f$, i.e. \[\chi(f) = \max_{Y \subset S} \chi(Y),\] where the maximum is taken over all cores $Y \subset S$ for $f$. Note that we are taking a maximum to find the \emph{minimal} core simply because the Euler characteristic of a core is always negative.

We can now define our replacement for the topological complexity of a finite-type surface, which is the tuple $(\chi(f), \xi(f))$ consisting of the core characteristic together with the end complexity of $f$. We call this pair the \emph{capacity} of $f$. 

\subsection{Well-pleated surfaces}

An important tool in our proof (which will ultimately allow us to deal with the issue of passing to powers) is the notion of \emph{well-pleated surfaces}. 

A \emph{pleated surface} is very similar to a simplicial hyperbolic surface with the triangulation of the singular hyperbolic surface replaced with a geodesic lamination \cite{epstein-marden.2006}. See \Cref{fig:Spinning} for an illustration of how you can pass from a triangulation to a geodesic lamination using a technique called \emph{spinning}. Technically, a \emph{pleated surface} is a map $\varphi: \Sigma \to N$ from a finite-type surface $\Sigma$ to a hyperbolic $3$-manifold $N$ together with a choice of hyperbolic metric, $\sigma$, on $\Sigma$ and a geodesic lamination $\lambda$ on $(\Sigma, \sigma)$ so that:

\begin{enumerate}
    \item[(1)] $\varphi$ is length preserving on paths,
    \item[(2)] $\varphi$ maps leaves of $\lambda$ to geodesics, and 
    \item[(3)] $\varphi(\Sigma - \lambda)$ is a totally geodesic submanifold of $N$.
\end{enumerate}

If $\lambda$ is the smallest geodesic lamination satisfying (2) and (3), we call it the \emph{pleating locus} for $\varphi$.

\begin{figure}
    \centering
    \includegraphics[width = \textwidth]{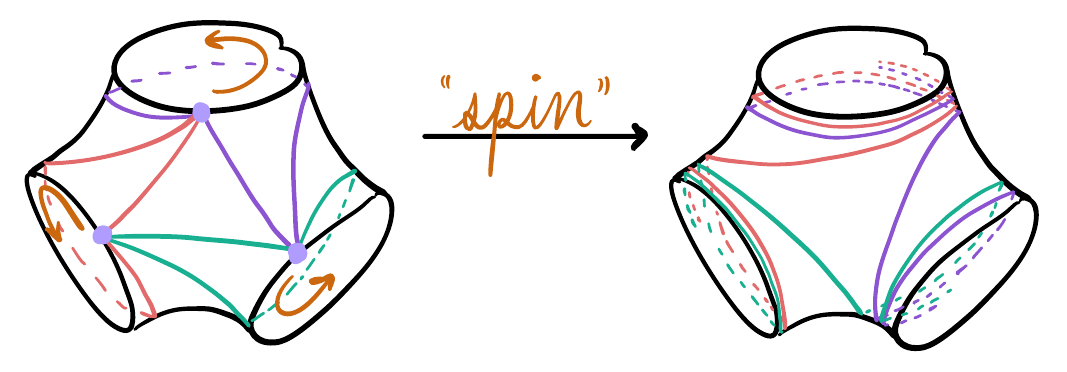}
    \caption{Note that you can go from a triangulation or arc system to a lamination by ``spinning".}
    \label{fig:Spinning}
\end{figure}


In \cite{EndPeriodic2}, we introduced the notion of a well-pleated surface as a natural adaptation of a pleated surface to the infinite-type setting. Let $f:S \to S$ be an end-periodic homeomorphism. A \emph{well-pleated surface adapted to the core $Y$} in $\overline M_f$ is a pleated surface $\varphi: (S, \sigma) \to \overline M_f$, homotopic to the inclusion of $S$ into $\overline M_f$, such that

\begin{enumerate}
    \item[1)] the pleating locus is contained in a \emph{pants-lamination}, and  
    \item[2)] $\varphi(S - Y) \subset \partial \overline M_f$.
\end{enumerate}

Here a \emph{pants-lamination} of a hyperbolic surface $X$ is a lamination whose leaves consist of a pants decomposition of $X$ together with the spiralling leaves in each pair of pants that are shown on the right of \Cref{fig:Spinning}. Additionally, although the ``inclusion" of $S$ into $\overline M_f$ is only well-defined up to precomposing with powers of $f$, this ambiguity does not exist in $\widetilde M_{\infty}$ where we will actually work. 

The goal is to build an interpolation through pleated surfaces to produce bounded length curves in $\overline M_f$ using bounded length pants decompositions of our well-pleated surface. We will let $\Omega(f, Y)$ be the set of all well-pleated surfaces adapted to a fixed core $Y$.

\begin{lemma}[{\cite[Lemma~4.1]{EndPeriodic2}}]
    Suppose that $Y$ is a minimal core of the strongly irreducible, end-periodic homeomorphism $f: S \to S$ and suppose $(\varphi:(S, \sigma) \to \overline M_f) \in \Omega(f, Y)$, then the total $\sigma$-length of the boundary of $Y$ satisfies $\ell_{\sigma}(\partial Y) \leq \frac{2\pi |\chi(Y)|}{\beta}$.
\end{lemma}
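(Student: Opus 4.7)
The plan is to combine the Gauss--Bonnet theorem applied to $(Y,\sigma|_Y)$ with a collar-style area bound for $\partial Y$. First I would observe that because $\varphi$ is a pleated surface whose pleating locus sits inside a pants-lamination, and because $\varphi$ collapses $S-Y$ into the totally geodesic boundary $\partial \overline M_f$, the components of $\partial Y$ must lie on the closed leaves of the pants decomposition underlying that pants-lamination; indeed, the isometric/totally-geodesic conditions in the definition of a pleated surface leave no room for $\partial Y$ to bend except along leaves of the pants-lamination. Hence $\partial Y$ is a union of simple closed $\sigma$-geodesics, and $(Y,\sigma|_Y)$ is a finite-area hyperbolic surface with geodesic boundary. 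Gauss--Bonnet then yields $\Area_\sigma(Y) = 2\pi|\chi(Y)|$.

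Next I would interpret $\beta$ as a uniform lower bound on the width of an embedded one-sided normal collar of $\partial Y$ in $Y$ (so $\beta$ plays the role of a collar-lemma or Margulis-type constant, depending only on the capacity of $f$). Given such a collar, Fubini integration along geodesic perpendiculars to $\partial Y$ produces the estimate
\[
\Area_\sigma(\text{collar of }\partial Y) \;\geq\; \beta \cdot \ell_\sigma(\partial Y).
\]
Since this collar is contained in $Y$, combining the two estimates gives $\beta \cdot \ell_\sigma(\partial Y) \leq 2\pi |\chi(Y)|$, which is exactly the stated inequality.

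The main obstacle is verifying that a one-sided collar of uniform width $\beta$ actually embeds inside $Y$, and this is precisely where the minimality of $Y$ and strong irreducibility of $f$ must be used. If two components of $\partial Y$ (or a single component and itself) were to come within distance $2\beta$, one could extract a shorter essential curve in $S$ that allows $Y$ to be shrunk to a strictly smaller core, contradicting minimality; meanwhile, strong irreducibility rules out the reducing-curve and AR-periodic configurations that would otherwise let components of $\partial Y$ be absorbed through the escaping sets $\mathcal U_\pm$. Once the collar is shown to embed with the right universal width, the area inequality above closes the argument.
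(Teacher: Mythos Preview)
Your skeleton is right --- Gauss--Bonnet on $(Y,\sigma|_Y)$ plus a collar-area estimate is exactly how the paper argues. The gap is in what $\beta$ is and how minimality is actually used.

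In the paper, $\beta=\beta(\xi(f))$ is \emph{not} a collar or Margulis constant intrinsic to the surface $Y$. It is the Basmajian constant guaranteeing that the $\beta$-neighborhood of the totally geodesic boundary $\partial\overline M_f$ inside the hyperbolic $3$-manifold is a product $\partial\overline M_f\times[0,\beta]$. The argument is then a contradiction: let $\varepsilon$ be the \emph{maximal} width for which $N_\varepsilon(\partial Y)$ is a union of annuli; Gauss--Bonnet and the area bound force $\varepsilon<\beta$ as soon as $\ell_\sigma(\partial Y)>2\pi|\chi(Y)|/\beta$. Since $\varphi$ is path-isometric and $\varphi(S-Y)\subset\partial\overline M_f$, the image $\varphi(N_\varepsilon(\partial Y))$ lies in the $\beta$-product neighborhood of $\partial\overline M_f$, and hence $N_\varepsilon(\partial Y)$ is properly homotopic into $\partial\overline M_f$. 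That is what lets you enlarge $S-Y$ and declare $Y-N_\varepsilon(\partial Y)$ a strictly smaller core, contradicting minimality.

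Your proposed mechanism --- ``two components of $\partial Y$ come close, so extract a shorter essential curve and shrink $Y$'' --- does not by itself produce a smaller \emph{core}: shrinking $Y$ requires showing that the excised annuli are homotopic into the nesting neighborhoods $U_\pm$, and that is a statement about the $3$-manifold (via the product collar of $\partial\overline M_f$), not something you can read off from the intrinsic surface geometry alone. Likewise, strong irreducibility plays no direct role in this step; the only ingredient beyond Gauss--Bonnet is the Basmajian product neighborhood and the minimality of $Y$.
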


Here $\beta = \beta(\xi(f)) > 0$ is a constant which, by work of Basmajian \cite[Theorem~1.1]{Basmajian}, guarantees that the $\beta$-neighborhood of $\partial \overline M_f$ is a product, $\partial \overline M_f \times [0, \beta]$. Although we have omitted most proofs in this paper since it is intended to be expository in nature and focused on intuition rather than details, we will provide a sketch of the proof of this lemma. It is quite brief and emphasizes the importance of considering \emph{minimal} cores. A cartoon of a minimal core is shown by the purple highlighted portion of the fiber $S$ in \Cref{fig:FlatCoveringPicture}.

\begin{proof}[Sketch of Proof]
    Let $\varepsilon > 0$ be maximal so that the $\varepsilon$-neighborhood of $\partial Y$ are annuli. Note that $\Area(Y, \sigma) = 2\pi |\chi(Y)|$, by Gauss--Bonnet, and $\Area(N_{\varepsilon}(\partial Y)) = \sinh(\varepsilon) \ell_{\sigma}(\partial Y) > \varepsilon \ell_{\sigma}(\partial Y)$, by some basic hyperbolic trigonometry \cite{Buser-book}. Since $\Area(N_{\varepsilon}(\partial Y)) \leq \Area(Y, \sigma)$, then $\varepsilon \ell_{\sigma}(\partial Y) \leq 2\pi|\chi(Y)|$.

    Thus, if $\ell_{\sigma}(\partial Y) > \frac{2\pi |\chi(Y)|}{\beta}$, then $\varepsilon < \beta$. This implies that the $N_{\varepsilon}(\partial Y)$ is properly homotopic into $N_{\beta}(\partial \overline M_f).$ Thus, $Y - N_{\varepsilon}(\partial Y)$ is a core for $f$ with $|\chi(Y - N_{\varepsilon}(\partial Y))| < |\chi(Y)|$, which contradicts our assumption that $Y$ was minimal. 
\end{proof}


If $Y$ is a core for $f$ with $\chi(Y) = \chi(f)$, then any $\varphi \in \Omega(f, Y)$ for which $|\chi(Y)|$ is minimized is called \emph{minimally well-pleated} and we can conclude that $\ell_{\sigma}(\partial Y) < \frac{2\pi |\chi(Y)|}{\beta}$.

\begin{figure}
    \centering
    \includegraphics[width = .8\textwidth]{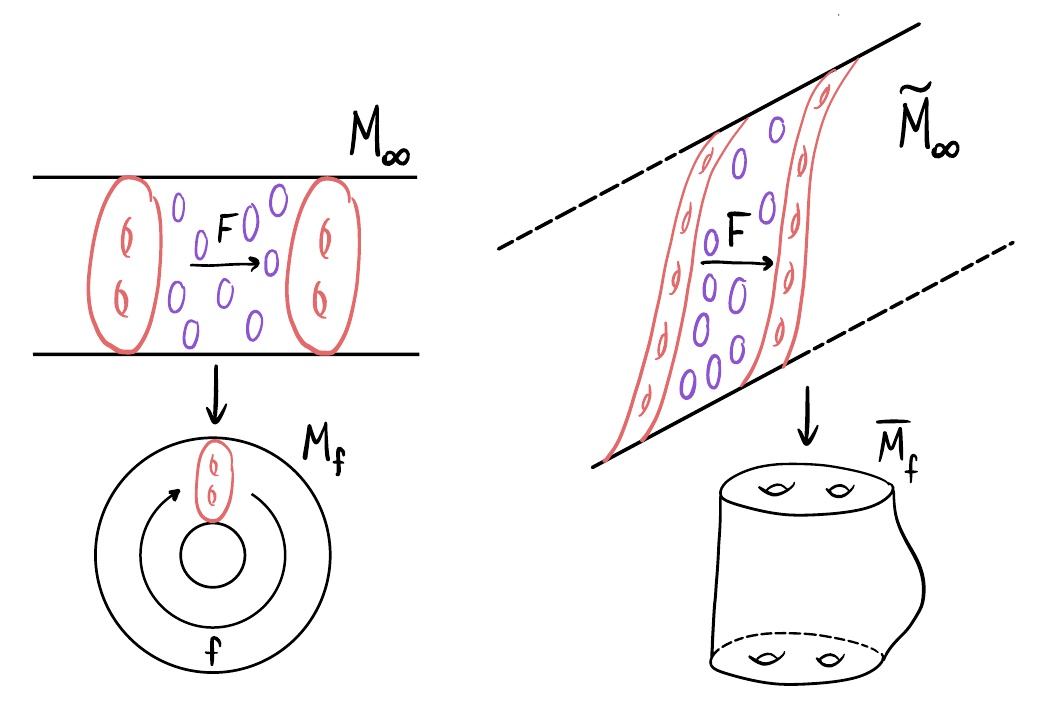}
    \caption{The covering transformation in both the finite-type and infinite-type setting is given by $F(x,t) = (f(x), t-1)$.}
    \label{fig:TwoInterpolations}
\end{figure}

So what does an interpolation through minimally well-pleated surfaces actually look like in $\widetilde M_{\infty}$? This is illustrated in \Cref{fig:BoundingBoundaries}. As you can see, the subsurface of $S$ on which we need to find a bounded length pants decomposition continues to grow as we move further into our interpolation. This is very different from what happens in the finite-type setting (see \Cref{fig:BrockPleatedSurfaces}). A cartoon illustration of this difference between the finite and infinite-type setting is also shown in \Cref{fig:TwoInterpolations} which reproduces \cite[Figure~1]{EndPeriodic2}.

\begin{figure}
    \centering
    \includegraphics[width = .9\textwidth]{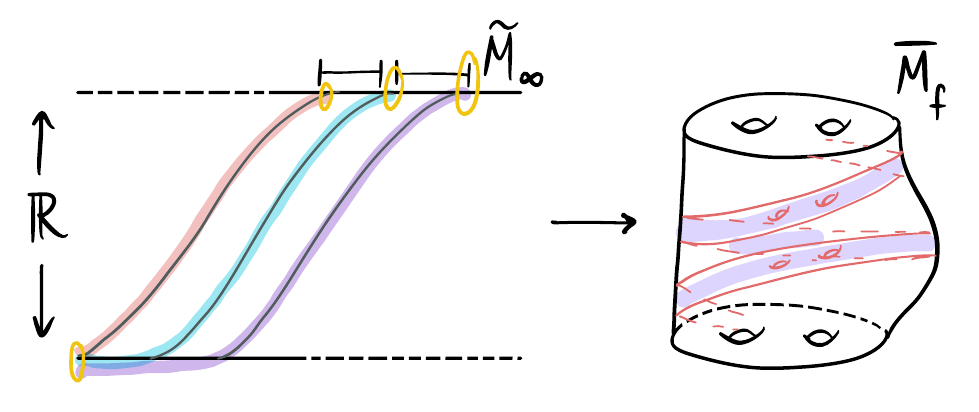}
    \caption{On the left is an illustration of the interpolation through pleated surfaces in $\widetilde M_{\infty}$. Note that as we move through this interpolation we are ``unzipping" along the top boundary of $\widetilde M_{\infty}$ and ``zipping up" along the bottom boundary of $\widetilde M_{\infty}$. Hence, the subsurface of S where we need to find a bounded length pants decomposition keeps growing (this is indicated by the yellow circles) as we move through our interpolation.}
    \label{fig:BoundingBoundaries}
\end{figure}

Fortunately, we can pass to a uniform power $K \geq 5 |\chi(f)|$ such that $Y$ is a core for $f^K$. We can then enlarge $Y$ to include a fundamental domain for the action of $f^K$ on $U_-$. This enlargement will support our interpolation and we will still have a bound on the length of its boundary. 

Why is this true? In short, this is due to the irreducibility of $f$! Since $f$ is irreducible, there is a uniform power $K$ (the same $K$ as above) such that if part of a curve $\alpha \subset Y$ leaves $Y$ for some forward iterate of $f$ (i.e. $f^{k_0}(\alpha) \cap U_+ \neq \emptyset$ for some $k_0 > 0$), then it does so for all sufficiently large forward iterates (i.e. $f^k(\alpha) \cap U_+ \neq \emptyset$ for $k \geq K$). This is proved in \cite[Lemma~3.1]{EndPeriodic2}.

Because $Y$ has bounded topological type it admits a bounded length pants decomposition. This is a relative version of Ber's Lemma \cite{BersP1, BersP2} which was proved by Parlier \cite[Theorem~1.1]{Parlier2023}.

In addition, none of the higher powers of these bounded length pants curves are homotopic into $Y$. Thus, no two bounded length curves we care about in $\widetilde M_{\infty}$ project to the same curve in $\overline M_{f^K}$. This avoids the issue illustrated by the green curves in \Cref{fig:BrockPleatedSurfaces} which Brock must address via a limiting argument that is not available in our setting. 

\subsection{Putting the pieces together}

Our proof of the lower bound now proceeds by building an interpolation through simplicial hyperbolic surfaces, guided by our minimally well-pleated surfaces, with the volume argument proceeding as in Brock, but without the need to take a limit. This is sketched in slightly more detail below.

\begin{enumerate}
    \item[1)] Use minimally well-pleated surfaces to find bounded length pants decompositions $P_{\alpha}$ and $P_{\omega}$ on $\phi_{\alpha}:(\Sigma, \sigma_{\alpha}) \to \widetilde M_{\infty}$ and $\phi_{\omega}:(\Sigma, \sigma_{\omega}) \to \widetilde M_{\infty}$, respectively.
    \item[2)] Build an interpolation through simplicial hyperbolic surfaces that interpolate from between these minimally well-pleated surfaces equipped with the bounded length pants decompositions $P_{\alpha}$ and $P_{\omega}$. 
    \item[3)] This interpolation gives us a path $P_{\alpha} = P_1, P_2, \ldots, P_n = P_{\omega}$ in the pants graph between $P_{\alpha}$ and $P_{\omega}$ through bounded length curves.
    \item[4)] From Brock \cite{Brock-mappingtorus-vol} we have the following bound: \[\tau(f) \leq d_{\mathcal P}(P, f^{-K}(P)) \leq d_{\mathcal P}(P_{\alpha}, P_{\omega}) \leq K \cdot |P_1 \cup \cdots \cup P_n|,\] where $K$ relies only on the capacity of $f$.
    \item[5)] We are done! Modulo some volume arguments that are needed to make our intuition about short curves controlling volume precise. 
\end{enumerate}


\section{An upper bound on volume} \label{sec:upper-bound}

Our approach to proving the upper bound in \Cref{thm:VolumeBounds} will follow a proof of Brock's upper bound given by Agol \cite{Agol-oct}. In particular, we will build a model for $\overline M_f$ using a block decomposition obtained from a path in $\mathcal P(S)$.

\subsection{The building blocks}

We will build our model for $\overline M_f$ out of \emph{pants blocks}, which come in two flavors corresponding to the two types of elementary moves that give the edge relations in $\mathcal P(S)$. More precisely, these pants blocks will be built as quotients of $\Sigma \times I$ where $\Sigma \cong \Sigma_{1,1}$ or $\Sigma \cong \Sigma_{0,4}$ and $I = [0,1]$. Up to homeomorphism there are two distinct pants blocks, $\mathcal B^T$ and $\mathcal B^S$, as shown in \Cref{fig:PantsBlock}.

\begin{figure}
    \centering
    \includegraphics[width = .9\textwidth]{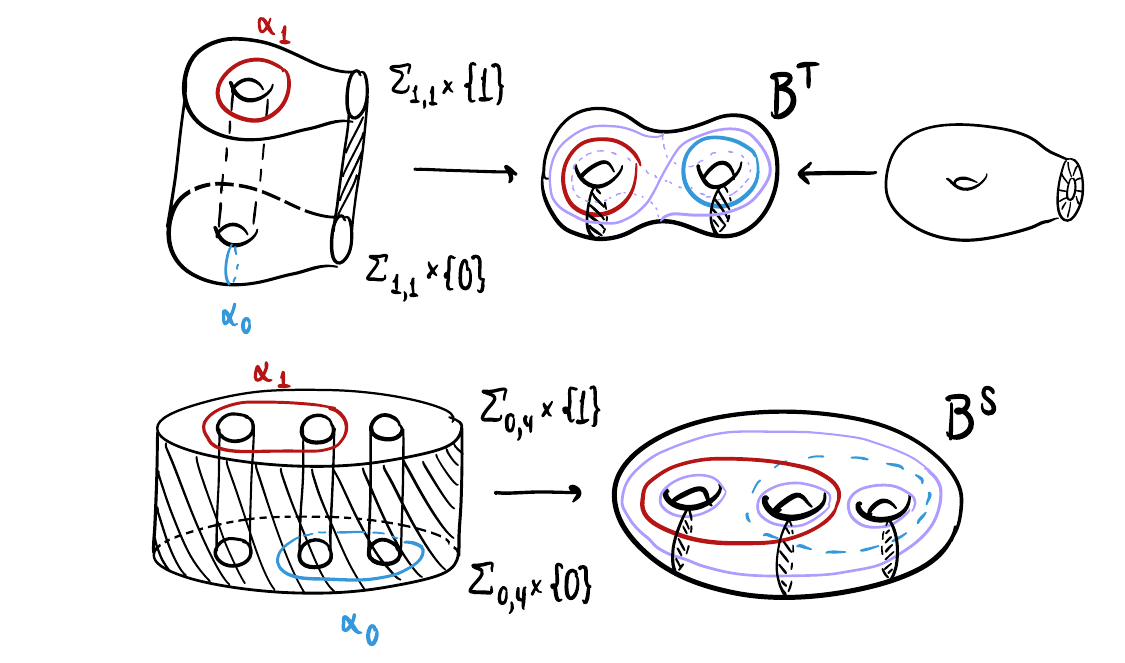}
    \caption{On the top is the pants block $\mathcal B^T$ obtained as a quotient of $\Sigma_{1,1} \times I$ and on the the bottom is the pants block $\mathcal B^S$ obtained as a quotient of $\Sigma_{0,4} \times I$.}
    \label{fig:PantsBlock}
\end{figure}

To be completely precise about the definition of these blocks we would need to introduce the notion of \emph{pared manifold}. This is important for hyperbolization as illustrated by the following theorem \cite{Thurston.Geometrization.1, mcmullen1992riemann, Morita.1984}. However, instead of going into detail, we will just say briefly that we will often consider a pair $(M, \mathfrak p)$ where $\mathfrak p$ is a closed subset of $\partial M$ called the \emph{paring locus} which consists of tori and annuli. This will be helpful notationally in our discussion. 

\begin{theorem}[Thurston]
   Suppose $(M, \mathfrak p)$ is a pared $3$-manifold and $\partial M - \mathfrak p \neq \emptyset$ is incompressible. Then $M - \mathfrak p$ admits a convex hyperbolic metric $\sigma$. If $(M, \mathfrak p)$ is additionally assumed to be acylindrical and non-degenerate, then there is a unique (up to isometry) convex hyperbolic structure, $\sigma_{min}$, on $M - \mathfrak p$ for which $\partial M - \mathfrak p$ is totally geodesic.
\end{theorem}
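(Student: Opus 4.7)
The plan is to deduce both assertions from Thurston's Hyperbolization Theorem for closed Haken manifolds (\Cref{t:hyperbolization}) via a doubling argument, with Mostow--Prasad rigidity providing both uniqueness and the totally geodesic boundary structure under the additional acylindricity hypothesis. The setup is to form the double $DM$ by gluing two copies of $M$ along $\partial M - \mathfrak p$; the resulting compact $3$-manifold has boundary $D\mathfrak p$, which is a union of tori (the tori of $\mathfrak p$ reappear, while the annular components of $\mathfrak p$ pair up into tori under doubling).

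For the second (acylindrical, non-degenerate) assertion, I would first verify that $DM$ is irreducible, atoroidal, and Haken. Hakenness is immediate from the incompressible surface $\partial M - \mathfrak p \subset DM$, and irreducibility follows from incompressibility of the doubling locus together with irreducibility of $M$. Atoroidality is the key point: any essential torus in $DM$ can be put into Roussarie--Thurston-style normal form (in the spirit of \Cref{thm:RoussarieThurston}) with respect to the doubling locus, and the resulting pieces inside $M$ are either essential tori disjoint from $\partial M$ (excluded because $(M,\mathfrak p)$ is pared and all toroidal behavior is confined to $\mathfrak p$) or essential annuli with boundary on $\partial M - \mathfrak p$ (excluded by acylindricity). \Cref{t:hyperbolization} then supplies a complete, finite-volume hyperbolic metric on the interior of $DM$, with cusps at $D\mathfrak p$. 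The doubling involution $\iota$ is an orientation-reversing homeomorphism of order two, so by Mostow--Prasad rigidity it is isotopic to an isometric involution $\iota'$; the fixed point set of $\iota'$ is a closed, totally geodesic surface which topologically must coincide with the doubling locus. Quotienting $DM$ by $\iota'$ yields the sought-after convex hyperbolic structure $\sigma_{min}$ on $M - \mathfrak p$ with $\partial M - \mathfrak p$ totally geodesic. Uniqueness of $\sigma_{min}$ follows because two such structures would double to hyperbolic structures on $DM$ related by a homeomorphism, and Mostow rigidity forces these to be isometric, so the quotient structures agree up to isometry.

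For the first assertion, one cannot simply double: without acylindricity, essential annuli in $M$ with boundary on $\partial M - \mathfrak p$ produce essential tori in $DM$, obstructing atoroidality. I expect this to be the main obstacle, and it is where the genuine pared version of Thurston's hyperbolization is needed rather than a clean reduction to the closed Haken case. The standard route is to run Thurston's inductive proof along a hierarchy of $M$ that is compatible with $\mathfrak p$, producing a geometrically finite Kleinian manifold whose convex core realizes $(M, \mathfrak p)$ topologically; the annular obstructions are absorbed into bending along the pared locus rather than forcing totally geodesic behavior. I would cite this pared hyperbolization directly (as the statement does via \cite{Thurston.Geometrization.1, mcmullen1992riemann, Morita.1984}) and treat its detailed proof as a black box, since the doubling-plus-rigidity argument above already suffices once acylindricity is in force, which is the hypothesis we actually need in the sequel for $\overline M_f$.
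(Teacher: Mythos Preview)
The paper does not supply its own proof of this theorem: it is stated as a cited result of Thurston (with references to \cite{Thurston.Geometrization.1, mcmullen1992riemann, Morita.1984}) and used as a black box, so there is no in-paper argument to compare against. Your doubling-plus-Mostow argument for the acylindrical case is correct and is in fact exactly the reasoning the paper deploys earlier to justify \Cref{t:end-periodic-mapping-tori}(ii) in the special case $(M,\mathfrak p) = (\overline M_f, \emptyset)$; you have simply written out the general pared version of that same idea. You are also right that the first assertion (existence without acylindricity) does not reduce to the closed Haken theorem by doubling, and the paper likewise makes no attempt to justify it beyond citation.
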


The blue, red, and purple curves drawn in each of $\mathcal B^S$ and $\mathcal B^T$ in \Cref{fig:PantsBlock} are the paring loci coming from $\alpha_0, \alpha_1$, and $\partial \Sigma \times I$, respectively. Note that there is a natural \emph{stratification} of the pants blocks given by letting $\partial_1 \mathcal B$ be the paring locus of $\mathcal B$ (note that this is just the collection of boundary components of three-holed spheres shown in \Cref{fig:Stratified}) and letting $\partial_2 \mathcal B = \partial \mathcal B - \partial_1 \mathcal B$ (note that this is simply the interior of the three-holed spheres shown in \Cref{fig:Stratified}). Thus, the stratification of $\mathcal B$ is $\partial_1 \mathcal B$, $\partial_2 \mathcal B$, and $\intt(\mathcal B)$. 

We have the following geometric data about our pants blocks; see e.g.~Agol \cite[Lemma 2.3]{Agol-oct}. Note that $V_{oct}$ is the volume of a regular ideal octahedron.

\begin{proposition}[Agol, \cite{Agol-oct}] \label{p:agol-blocks}
    $\mathfrak B^S - \partial_1 \mathfrak B^S$ (respectively $\mathfrak B^T - \partial_1 \mathfrak B^T$) has a convex hyperbolic metric $\sigma_{\mathcal B^S}$ (respectively $\sigma_{\mathcal B^T}$) with totally geodesic boundary consisting of a union of thrice-punctured spheres. Furthermore, $Vol(\mathfrak B^S - \partial_1 \mathfrak B^S, \sigma_{B^T}) = V_{oct}$ and $Vol(\mathfrak B^T - \partial_1 \mathfrak B^T, \sigma_{B^S}) = 2 V_{oct}$.
\end{proposition}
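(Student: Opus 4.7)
The plan is to split the argument into two parts: existence and uniqueness of the hyperbolic metric via the pared hyperbolization theorem stated just above, and explicit volume computation via ideal-polyhedral decomposition. Throughout, write $\mathcal B$ for either of $\mathcal B^S$ or $\mathcal B^T$.

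First I would verify that the pair $(\mathcal B, \partial_1 \mathcal B)$ satisfies the hypotheses of the cited Thurston theorem. The paring locus $\partial_1 \mathcal B$ is a union of annuli coming from $\alpha_0 \times \{0\}$, $\alpha_1 \times \{1\}$, and $\partial \Sigma \times I$ as indicated in \Cref{fig:PantsBlock}, while $\partial_2 \mathcal B := \partial \mathcal B - \partial_1 \mathcal B$ is a disjoint union of four thrice-punctured spheres, one for each pair of pants appearing in the two pants decompositions of $\Sigma$ that are related by the elementary move. Incompressibility of $\partial_2 \mathcal B$ is immediate since each pair of pants is $\pi_1$-injective in $\mathcal B$. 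Atoroidality and acylindricity follow by a direct inspection: any essential torus or essential annulus in $\mathcal B$, placed in Roussarie--Thurston normal form with respect to the interpolating fibration by $\Sigma \times \{t\}$, would descend to a periodic or reducing curve configuration on $\Sigma$, which is incompatible with the minimal combinatorics of a single elementary move on $\Sigma_{1,1}$ or $\Sigma_{0,4}$. Applying the pared hyperbolization theorem then yields the unique convex hyperbolic metric $\sigma_{\mathcal B}$ on $\mathcal B - \partial_1 \mathcal B$ with totally geodesic boundary; rigidity of the complete finite-area hyperbolic structure on the thrice-punctured sphere forces each boundary component to be a standard thrice-punctured sphere, as claimed.

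For the volume, I would exhibit $\mathcal B - \partial_1 \mathcal B$ as an explicit gluing of regular ideal octahedra. The cleanest route is to pass to the double $D\mathcal B$ along the totally geodesic boundary $\partial_2 \mathcal B$, producing a cusped hyperbolic $3$-manifold with $\Vol(D\mathcal B) = 2\,\Vol(\mathcal B - \partial_1 \mathcal B,\sigma_{\mathcal B})$. The topology of $D\mathcal B$ is completely determined by the combinatorics of the elementary move: for the $S$-move block, $D\mathcal B^S$ is homeomorphic to a specific link complement whose canonical ideal cell decomposition consists of two regular ideal octahedra; for the $T$-move block, $D\mathcal B^T$ is homeomorphic to a related link complement that decomposes into four regular ideal octahedra. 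Halving gives $\Vol(\mathcal B^S - \partial_1 \mathcal B^S) = V_{oct}$ and $\Vol(\mathcal B^T - \partial_1 \mathcal B^T) = 2V_{oct}$, as desired.

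The main obstacle is the polyhedral identification step: matching the doubled block combinatorially with the prescribed gluing of regular ideal octahedra and verifying that every face-pairing induced by the elementary-move quotient is accounted for. This is exactly the content of Agol's Lemma 2.3 in \cite{Agol-oct}, and the underlying reason the answer comes out so cleanly is Mostow--Prasad rigidity applied to $D\mathcal B$: once the topological identification is in place, the complete hyperbolic structure is unique, and the volume is read off by counting ideal octahedra. Because the $T$-move takes place on $\Sigma_{1,1}$ (where $\alpha_0$ and $\alpha_1$ meet once) while the $S$-move takes place on $\Sigma_{0,4}$ (where they meet twice), the extra ``twist'' forced by the one-point intersection is exactly what accounts for the factor of two difference between the two volumes.
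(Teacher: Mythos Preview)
The paper does not prove this proposition; it is stated with attribution to Agol and a pointer to \cite[Lemma~2.3]{Agol-oct}, so there is no proof here to compare yours against. Your outline---pared hyperbolization for existence, explicit polyhedral decomposition for the volume---is indeed the shape of Agol's argument.

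That said, there is a genuine error. The volumes in the proposition as printed are swapped (a typo in the paper; compare the subsequent corollary $\Vol(N-L)=V_{oct}(n_T+2n_S)$ and the remark that ``each $\mathfrak B^S$ block contributes twice as much volume as a $\mathfrak B^T$ block''). The correct values are $\Vol(\mathcal B^T-\partial_1\mathcal B^T)=V_{oct}$ and $\Vol(\mathcal B^S-\partial_1\mathcal B^S)=2V_{oct}$. You have reproduced the typo and then manufactured a heuristic to justify it: your claim that ``the extra twist forced by the one-point intersection'' makes the $T$-block larger is exactly backwards. More intersections mean more combinatorial complexity and more volume, not less. The clean structural reason for the factor of two is that $\Sigma_{0,4}$ double-covers $\Sigma_{1,1}$ via the hyperelliptic involution, and this cover is compatible with the elementary moves and extends to a degree-two cover $\mathcal B^S\to\mathcal B^T$ of the blocks themselves; hence the $S$-block has twice the volume of the $T$-block. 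Consequently your octahedron counts for the doubles are also swapped: $D\mathcal B^T$ decomposes into two regular ideal octahedra and $D\mathcal B^S$ into four.

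A smaller point: invoking Roussarie--Thurston normal form to check acylindricity of a single block is heavier machinery than needed and somewhat circular in spirit. For objects this concrete one either verifies the pared axioms by hand, or---more in keeping with Agol's approach---simply exhibits the ideal-octahedral structure directly, from which the hyperbolic metric with totally geodesic boundary (and hence atoroidality and acylindricity) follows a posteriori.
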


\begin{figure}
    \centering
    \includegraphics[width = .75\textwidth]{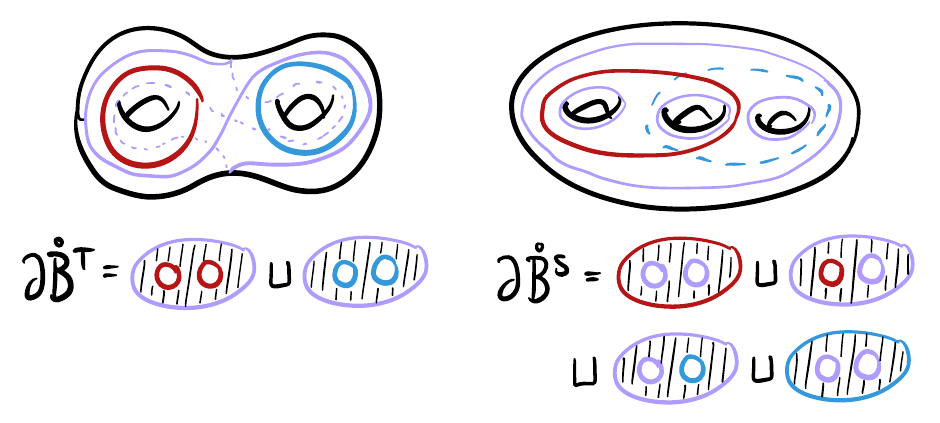}
    \caption{The boundaries of $\mathring {\mathfrak B^S}$ and $\mathring {\mathfrak B^T}$ both consist of a disjoint union of three-holed spheres (i.e. pairs of pants) as shown. }
    \label{fig:Stratified}
\end{figure}

\subsection{Block decompositions}

Suppose $N$ is a compact $3$-manifold, possibly with boundary, and $L \subset N$ is a link. A \emph{block decomposition} of $N$ relative to $L$ is a finite collection of pants blocks $\{ \mathcal B_i\}_{i = 1}^n$ so that: 

\begin{enumerate}
    \item[1)] the interiors of the $\mathcal B_i$ are pairwise disjoint;
    \item[2)] the union of the blocks is all of $N$; and
    \item[3)] whenever two blocks intersect, they do so in a union of components of the strata.
\end{enumerate}

In other words, $N$ is built from finitely many pants blocks glued together in pairs along closures of three-holed spheres in their boundaries and $L$ is the union of the $1$-boundaries $\cup \partial_1 \mathcal B_i$. Observe that $L \cap \partial N$ is a pants decomposition of $\partial N$. We also have the following corollary of \Cref{p:agol-blocks}. Note that the nonstandard path metric we placed on the pants graph at the beginning of \Cref{sec:lower-bound} is precisely to accommodate the fact that each $\mathfrak B^S$ block contributes twice as much volume as a $\mathfrak B^T$ block.

\begin{corollary}
    $N - L$ admits a convex hyperbolic metric with totally geodesic thrice-punctured sphere boundary components and volume $\Vol(N - L) = V_{oct}(n_T + 2n_S)$ where $n_T$ is the number of $\mathcal B^T$ blocks and $n_S$ is the number of $\mathcal B^S$ blocks.
\end{corollary}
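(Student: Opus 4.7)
The plan is to prove the corollary by gluing the convex hyperbolic metrics supplied by \Cref{p:agol-blocks} block-by-block along their shared thrice-punctured sphere faces.

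First, I would apply \Cref{p:agol-blocks} to each block $\mathcal{B}_i$ in the decomposition, producing on $\mathcal{B}_i - \partial_1 \mathcal{B}_i$ a convex hyperbolic metric $\sigma_i$ with totally geodesic boundary. By construction of the pants blocks and their stratifications, each component of $\partial_2 \mathcal{B}_i = \partial \mathcal{B}_i - \partial_1 \mathcal{B}_i$ is a pair of pants, and after removing the paring locus $\partial_1 \mathcal{B}_i$ it becomes a thrice-punctured sphere sitting as a totally geodesic boundary component of $(\mathcal{B}_i - \partial_1 \mathcal{B}_i, \sigma_i)$.

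Next I would carry out the gluing. Whenever two blocks $\mathcal{B}_i$ and $\mathcal{B}_j$ meet, they do so along a union of components of their $\partial_2$ strata, i.e.\ along pairs of pants that become thrice-punctured spheres once $L$ is removed. The key fact is that the thrice-punctured sphere admits a unique complete, finite-area hyperbolic metric (up to isometry), so the totally geodesic boundary components of $(\mathcal{B}_i - \partial_1 \mathcal{B}_i, \sigma_i)$ and $(\mathcal{B}_j - \partial_1 \mathcal{B}_j, \sigma_j)$ are automatically isometric along any shared face. The metrics therefore glue isometrically; smoothness of the resulting hyperbolic structure across each interior thrice-punctured sphere follows from the standard doubling/reflection argument, since each interior thrice-punctured sphere is totally geodesic on both sides. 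The remaining boundary of $N - L$ is the portion of $\partial N$ not glued to another block, namely the thrice-punctured spheres in $\partial N - (L \cap \partial N)$, and these are totally geodesic by construction. Convexity is inherited from convexity of the pieces, since the only boundary not glued away is totally geodesic.

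Finally, because volume is additive over the (finitely many) interior-disjoint pieces and the interior totally geodesic surfaces have measure zero,
\[
\Vol(N - L) \;=\; \sum_{i=1}^n \Vol(\mathcal{B}_i - \partial_1 \mathcal{B}_i, \sigma_i) \;=\; n_T \cdot V_{oct} + n_S \cdot 2 V_{oct} \;=\; V_{oct}(n_T + 2 n_S),
\]
using the volume computations from \Cref{p:agol-blocks}.

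The main obstacle I anticipate is the gluing step: one has to be slightly careful that the rigidity of the thrice-punctured sphere really does produce a \emph{hyperbolic} manifold (and not merely a metric space) along the shared faces, and that no extra completion data is needed at the cusps created by $L$. Both points are routine once one invokes the uniqueness of the complete hyperbolic structure on $\Sigma_{0,3}$ and the reflection principle across totally geodesic surfaces, but they are the only substantive content beyond bookkeeping.
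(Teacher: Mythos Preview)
Your approach is correct and is exactly the intended argument: the paper does not spell out a proof of this corollary at all, treating it as an immediate consequence of \Cref{p:agol-blocks} via the standard gluing of blocks along their totally geodesic thrice-punctured sphere faces (using rigidity of the complete hyperbolic structure on $\Sigma_{0,3}$), which is precisely what you describe. One minor bookkeeping remark: the displayed volumes in \Cref{p:agol-blocks} as written in the paper have the labels $\mathfrak B^S$ and $\mathfrak B^T$ swapped (a typo), but your final computation $n_T\cdot V_{oct}+n_S\cdot 2V_{oct}$ uses the correct values, consistent with the corollary and with the paper's own remark that each $\mathfrak B^S$ block contributes twice the volume of a $\mathfrak B^T$ block.
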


This result, which is stated as Corollary~4.2 in \cite{EndPeriodic1}, hints at where we are going with all of this. If we can build a model manifold $\widehat M_f$ (for $\overline M_f$) out of pants blocks corresponding to a path in the pants graph so that the volume of $\widehat M_f$ bounds the volume of $\overline M_f$, then we are well on our way to proving the upper bound.

\subsection{Building our model manifold}

Our main strategy for building the model manifold $\widehat M_f$ is to first attempt to decompose $\overline M_f$ into pants blocks and then collapse everything outside of these blocks. In other words, we will obtain $\widehat M_f$ (which admits a block decomposition) as a quotient of $\overline M_f$.

So lets begin with our decomposition of $\overline M_f$ into pants blocks. First choose a pants decomposition $P$ in an $f$-invariant component $\Omega \subset \mathcal P(S)$ and a path $P = P_0, P_1, \ldots, P_n = f^{-1}(P)$ in $\Omega$. We will realize this path via pants blocks in $M_f$ as shown in \Cref{fig:PantsMovesMappingTorus}. Define $L = \cup_{k} P_k \subset M_f$.

\begin{figure}
    \centering
    \includegraphics[width = \textwidth]{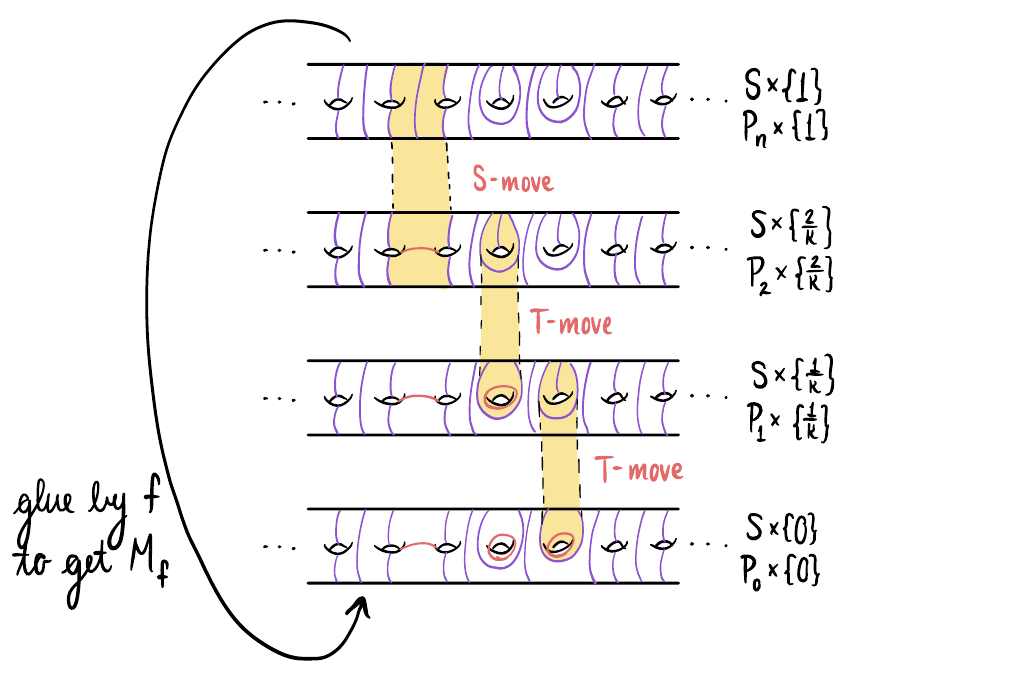}
    \caption{Realizing the path $P = P_0, P_1, \ldots, P_n = f^{-1}(P)$ in $M_F$. Recall that S-moves and T-moves are the elementary moves associated to edges in the pants graph.}
    \label{fig:PantsMovesMappingTorus}
\end{figure}

In particular, we are able to show that there exists a quotient map $h: \overline M_f \to \widehat M_f$ satisfying the following:

\begin{enumerate}
    \item[1)] $\widehat L = h(L)$ is a link.
    \item[2)] The image of the yellow pieces shown in \Cref{fig:PantsMovesMappingTorus} under $h$ form a block decomposition of $(\widehat M_f, \widehat L)$.
    \item[3)] $P_{\Omega} = h^{-1}(\widehat L \cap \partial \widehat M_f) \subset \partial \overline M_f$ is a pants decomposition depending only on $\Omega$, up to isotopy.
    \item[4)] $\overline M_f - P_{\Omega}$ is acylindrical.
\end{enumerate}

This is proved as Proposition 4.3 in \cite{EndPeriodic1}.

Since $\overline M_f - P_{\Omega}$ is acylindrical, it admits a convex hyperbolic structure with totally geodesic, thrice punctured sphere boundary components. 

\smallskip

\noindent \underline{Idea of proof:} Time to flow!

\smallskip

There is a suspension flow $(\psi_s)$ on $M_f$ induced by the upward flow $(x, t) \mapsto (x, t + s)$ on $S \times \mathbb R$. This can be reparametrized to a local flow $\Psi_s$ of $\overline M_f$ so that the flow lines that exit every compact set limit to a point on $\partial \overline M_f$ in finite time. Note that, due to irreducibility of $f$, every flow line of $\Psi_s$ hits at least one of the yellow regions shown in \Cref{fig:PantsMovesMappingTorus} (these correspond to pants blocks).

So we can decompose $\overline M_f$ into either singletons contained in a yellow region (corresponding to a pants block) or maximal components of flow lines of $\Psi_s$ contained in the complement of the yellow regions. These compact arcs get collapsed to points under $h$ as shown in \Cref{fig:BlockQuotient}. 

\begin{figure}
    \centering
    \includegraphics[width = .5\textwidth]{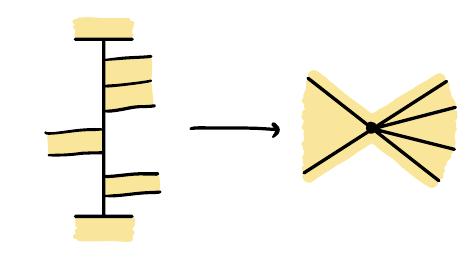}
    \caption{A cartoon illustrating the collapsing of flow lines of $\Psi_s$ under $h$.}
    \label{fig:BlockQuotient}
\end{figure}

The following result of Armentrout is crucial to our proof.

\begin{theorem}[\cite{Arm69}] \label{thm:Armentrout}
    Suppose $M$ is a $3$-manifold and $G$ is a cellular decomposition of $M$ such that $M/G$ is a $3$-manifold. Then the quotient map $M \to M/G$ is homotopic to a homeomorphism.
\end{theorem}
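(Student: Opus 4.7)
The plan is to apply the Bing Shrinking Criterion, which reduces the theorem to showing that the decomposition $G$ is \emph{shrinkable}. Recall that $G$ is shrinkable in the sense of Bing if, for every open cover $\mathcal{U}$ of $M/G$ and every $\varepsilon > 0$, there is a homeomorphism $h \colon M \to M$ such that (i) $\pi \circ h$ is $\mathcal{U}$-close to $\pi$, where $\pi \colon M \to M/G$ is the quotient map, and (ii) $\mathrm{diam}(h(g)) < \varepsilon$ for every $g \in G$. Once shrinkability is established, a standard limiting argument produces a sequence of homeomorphisms whose limit is a map that factors through $\pi$; composing with its inverse shows $\pi$ is approximable by, and in particular homotopic to, a homeomorphism.

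First I would use the hypothesis that each element $g \in G$ is cellular to fix a decreasing sequence of closed $3$-cells $B_1(g) \supset B_2(g) \supset \cdots$ with $g = \bigcap_n B_n(g)$ and $B_{n+1}(g) \subset \mathrm{int}(B_n(g))$. This nested structure is what permits shrinking via ambient homeomorphisms supported in balls, which in dimension three are well behaved: the interior of a $3$-ball can be isotoped to be arbitrarily small via an ambient isotopy supported in any slightly larger ball. In particular, shrinking a single cellular element of $G$ by any prescribed amount can be arranged by a homeomorphism of $M$ supported in an arbitrarily small neighborhood.

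Next, the hypothesis that $M/G$ is a $3$-manifold is used to control $G$ locally. Upper semicontinuity of the decomposition (automatic once the quotient is Hausdorff) together with the local manifold structure on $M/G$ produces, for each $g$, a saturated neighborhood whose image in $M/G$ lies in a coordinate ball. The candidate homeomorphism $h$ is then assembled by simultaneously shrinking the elements $g$ inside these saturated neighborhoods, using a partition-of-unity style construction to splice the local shrinkings into a single global homeomorphism of $M$ that moves each point within a $\mathcal{U}$-lift, so that $\pi \circ h$ and $\pi$ remain $\mathcal{U}$-close.

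The main obstacle is the coherence of this simultaneous shrinking: the decomposition elements are not uniformly separated and can accumulate onto one another, so a naive shrinking of one $g$ risks enlarging nearby elements or violating the $\mathcal{U}$-closeness condition. Overcoming this requires carefully organizing the supports of the local shrinking homeomorphisms using the local product structure supplied by $M/G$ being a $3$-manifold, and iterating: at each stage one shrinks only those elements whose diameter still exceeds $\varepsilon$, while keeping the supports small enough that previously shrunken elements are not enlarged past the current tolerance. Convergence of this iteration is the delicate step, but once it succeeds it yields the required $h$ and hence, by the Bing Shrinking Criterion, the conclusion.
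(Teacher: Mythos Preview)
The paper does not prove this theorem at all: it is quoted as a black box from Armentrout's 1969 paper \cite{Arm69}, and the surrounding text explicitly says ``we will not prove it here.'' The paper's only use of the result is as an input to the construction of the model manifold $\widehat M_f$; the work the paper actually does is to verify that the quotient $\widehat M_f$ is a $3$-manifold so that Armentrout's theorem applies. So there is no ``paper's own proof'' to compare against.

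As for your outline on its own terms: the Bing shrinking framework is indeed the right lens for results of this type, and your description of what needs to happen is accurate in spirit. But the proposal is not a proof---the entire content of Armentrout's theorem lies in the step you label ``the delicate step'' and then do not carry out. Shrinking a \emph{single} cellular set is trivial; the whole difficulty is in shrinking all of them simultaneously without re-enlarging previously shrunk elements, and the hypothesis that $M/G$ is a $3$-manifold is doing real work there that you only gesture at. In particular, the ``partition-of-unity style construction'' you invoke does not exist in any straightforward way for decompositions whose nondegenerate elements can accumulate wildly; Armentrout's argument (and later refinements by Siebenmann and others) is substantially more intricate than your sketch suggests. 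So if the goal were to supply a self-contained proof, this would be a plan rather than a proof; for the purposes of this expository paper, citing \cite{Arm69} is exactly what is intended.
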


Although we will not prove it here, a good deal of work is needed to show that $\widehat M_f$ is indeed a $3$-manifold so that we can apply \Cref{thm:Armentrout}. See Section~5 of \cite{EndPeriodic1}.

\subsection{Dehn filling and volume}

One of the key tools that we will need to complete our proof of the upper bound is \emph{Dehn filling}. In particular, we will use a process often referred to as \emph{drilling and filling}. Given a $3$-manifold $N$ we can remove the open tubular neighborhood of a simple closed curve $\gamma$ to obtain a torus boundary component $T$ (note that this can be thought of as drilling $\gamma$ out of $N$). We can then \emph{Dehn fill} $N - \gamma$ along $T$ by gluing in a solid torus $D \times S^1$ using a homeomorphism $f: D \times S^1 \to T$ as shown in \Cref{fig:DehnFilling}.

\begin{figure}[ht]
    \centering
    \includegraphics[width = .5\textwidth]{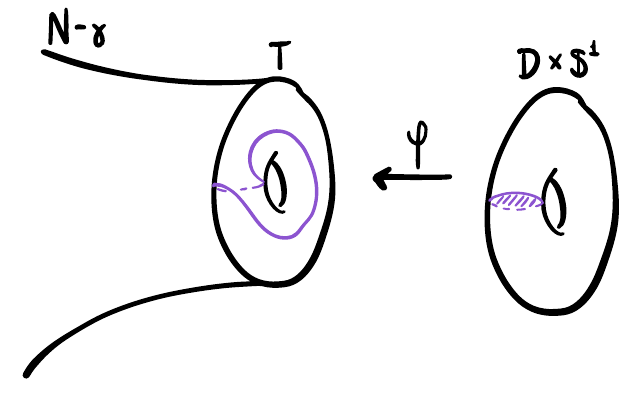}
    \caption{An illustration of gluing a solid torus $D \times S^1$ into the torus boundary component $T$ of $N - \gamma$ with a specified choice of Dehn filling coefficient.}
    \label{fig:DehnFilling}
\end{figure}

The homeomorphism type of this gluing map $f$ is determined by the isotopy class of simple closed curve $\beta \subset T$ to which $\partial D \times {x}$ is identified. We call $\beta$ the \emph{Dehn filling coefficient}. A choice of coefficient is illustrated by the purple curve on T in \Cref{fig:DehnFilling}. Since $\beta$ is a simple closed curve on $T$, then, after choosing a basis for $\pi_1(T) \cong \mathbb Z^2$, it can be described by a pair of of relatively prime integers $(p, q)$. We can the denote the filled manifold by $(N - \gamma)(\beta) = (N - \gamma)(p,q)$.

This process of drilling and filling can be extended to a link $L \subset N$, which is the image of an embedding of a disjoint union of circles. We will make use of the following result which addresses how Dehn filling changes the monodromy for a fibered $3$-manifold. 

\begin{proposition}[\cite{StallingsFiber}]
    Let $\gamma$ be a curve in $Y$ and $K$ the associated knot in the mapping torus $M_f$ for $f \in \Map(Y)$. Then $(M_f - K)(1, n) \cong M_{f \circ T_{\gamma}^n}$. More generally, if $L$ is a link consisting of curves $\gamma_1, \ldots, \gamma_k$ with $\gamma_i$ lying on $Y \times \{\frac{i}{k+1}\} \subset Y \times (0,1) \subset M_f$, for each $i$, then \[(M_f - L)((1, n_1), \ldots, (1,n_k)) \cong M_{f \circ T_{\gamma_k}^{n_k} \circ \cdot \circ T_{\gamma_1}^{n_1}}.\]
\end{proposition}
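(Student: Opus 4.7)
The plan is to establish the single-curve case first and obtain the link version by iterating it at each of the disjoint fiber levels $\tfrac{i}{k+1}$. The key ingredient is the classical cut-and-reglue principle in 3-manifold topology: if $A$ is an embedded annulus in a 3-manifold $N$ with core knot $K$, then performing $(1,n)$-Dehn surgery on $K$ with respect to the framing determined by $A$ is homeomorphic to cutting $N$ along $A$ and regluing by $n$ Dehn twists supported in a collar of $K$ inside $A$.

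To apply this, I would take an annular neighborhood $A \subset Y$ of $\gamma$ and view $A \times \{\tfrac{1}{2}\}$ as an embedded annulus in $M_f$ whose core is $K$ and whose two boundary circles on $\partial N(K)$ are parallel longitudes $\lambda$. With this framing, the meridian $\mu$ bounds a transverse disk to $K$, and the slope $\mu + n\lambda$ is precisely $(1, n)$; by the cut-and-reglue principle, $(M_f - K)(1, n)$ is therefore homeomorphic to the 3-manifold obtained from $M_f$ by cutting along $A \times \{\tfrac{1}{2}\}$ and regluing by $T_\gamma^n$.

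Next I would identify this modified manifold with $M_{f \circ T_\gamma^n}$. Cutting $M_f$ along the entire fiber $Y \times \{\tfrac{1}{2}\}$ yields $Y \times [0,1]$ with the usual identification $(x, 1) \sim (f(x), 0)$, and regluing the two copies of $Y \times \{\tfrac{1}{2}\}$ by $T_\gamma^n$ rather than the identity produces $M_{f \circ T_\gamma^n}$, since the twist can be isotoped through the $[0,1]$-direction and absorbed into the monodromy gluing. Because $T_\gamma^n$ is supported in $A$, this regluing takes place only on $A \times \{\tfrac{1}{2}\}$, exactly matching the surgery description. For the link case, the same argument applies level by level since the modifications at distinct levels $\tfrac{i}{k+1}$ are supported in pairwise disjoint neighborhoods; pushing each twist up to the monodromy level from $i=1$ through $i=k$ composes them as $T_{\gamma_k}^{n_k} \circ \cdots \circ T_{\gamma_1}^{n_1}$ before pre-composition with $f$.

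I expect the main obstacle to be bookkeeping: carefully tracking the framing and orientation conventions so that the slope $(1,n)$ yields exactly $T_\gamma^n$ (with the correct sign rather than $T_\gamma^{-n}$), and verifying that the isotopy pushing each Dehn twist into the monodromy genuinely produces the stated composition order, without introducing spurious interactions with the other $\gamma_j$. Once the framing is fixed and the single-curve case is set up carefully, the extension to links should be essentially formal.
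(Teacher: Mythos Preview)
The paper does not actually supply a proof of this proposition; it is stated as a known result with a citation to Stallings and then used as a black box in the Dehn-filling discussion. So there is no ``paper's proof'' to compare against.

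That said, your outline is the standard and correct argument. The cut-and-reglue principle for an embedded annulus with core $K$ is exactly the right tool, and your choice of the horizontal annulus $A\times\{\tfrac12\}$ (with $A$ a regular neighborhood of $\gamma$ in the fiber) gives the fiber framing needed so that $(1,n)$-filling corresponds to $n$ Dehn twists along $\gamma$ in the fiber. The identification with $M_{f\circ T_\gamma^n}$ via cutting the full fiber and absorbing the twist into the monodromy is also right, and the link case genuinely is formal once the levels are disjoint.

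One small wording issue: you write that the twists are composed ``before pre-composition with $f$,'' but the target monodromy $f\circ T_{\gamma_k}^{n_k}\circ\cdots\circ T_{\gamma_1}^{n_1}$ has $f$ applied \emph{last}, i.e.\ the twists are pre-composed with $f$ (equivalently, $f$ is post-composed). Your own caveat about sign and ordering bookkeeping is well placed; just be careful to fix the orientation of the $S^1$ factor and the convention for reading monodromy (say, as the first-return map of the upward flow) at the outset, and then the composition order and the sign of $n$ fall out unambiguously.
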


So how does this procedure of drilling and filling relate to volume? By utilizing Thurston's Hyperbolic Dehn Surgery Theorem which we state below.

\begin{theorem}
    Let $(N, \sigma)$ be a finite volume hyperbolic $3$-manifold with totally geodesic boundary and $k$ torus cusps. Suppose $\beta^n = (\beta_1^n, \ldots, \beta_k^n)$ are a sequence of Dehn filling coefficients so that for each $i$, either $\beta_i^n = \infty$ for all $n$, or $\beta_i^n \to \infty$. Then for $n$ sufficiently large, $N(\beta^n)$ admits a complete hyperbolic structure $\sigma(\beta^n)$ with totally geodesic boundary and \[\Vol(N(\beta^n), \sigma(\beta^n)) \to \Vol(N, \sigma).\] Moreover, \[\Vol(N(\beta^n), \sigma(\beta^n)) \leq(N, \sigma)\] with equality if and only if $\beta^n = (\infty, \ldots, \infty).$     
\end{theorem}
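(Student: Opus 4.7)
The plan is to reduce to the classical Thurston hyperbolic Dehn surgery theorem for cusped manifolds without boundary via a doubling trick, and then to bring in Gromov's simplicial volume for the monotonicity clause. Let $DN = N \cup_{\partial N} N$ be the double of $N$ along its totally geodesic boundary. Then $DN$ is a finite-volume hyperbolic $3$-manifold with $2k$ cusps, equipped with an orientation-reversing isometric involution $\iota$ fixing the image of $\partial N$. Each filling slope $\beta_i^n$ on $N$ determines an $\iota$-equivariant pair of slopes on $DN$, and an $\iota$-equivariant hyperbolic structure on the Dehn filled double descends to one on $N(\beta^n)$ with totally geodesic boundary, so it suffices to prove the theorem equivariantly for $DN$.

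First I would carry out the classical argument for $DN$ via Thurston's analysis of the $\PSL(2,\mathbb{C})$-character variety of $\pi_1(DN)$ near the discrete faithful representation $\rho_0$ induced by $\sigma$. Thurston shows that this variety is locally a smooth complex manifold of dimension equal to the number of cusps, with local coordinates given by the complex lengths of peripheral curves. Imposing the condition that a given $\beta_i^n$ becomes trivial in the filling cuts out a complex hypersurface, and as $|\beta_i^n|\to\infty$ these hypersurfaces converge to the cusp locus. The implicit function theorem then produces, for $n$ large, a nearby representation $\rho_n$ whose quotient realizes $N(\beta^n)$ as a hyperbolic manifold with structure $\sigma(\beta^n)$. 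The $\iota$-equivariance can be arranged either by starting with an $\iota$-invariant ideal triangulation of $DN$, or obtained a posteriori by Mostow--Prasad rigidity applied to the approximating structures.

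For the volume convergence $\Vol(N(\beta^n), \sigma(\beta^n)) \to \Vol(N, \sigma)$, I would observe that by construction $(N(\beta^n), \sigma(\beta^n))$ converges geometrically to $(N, \sigma)$ on compact subsets; the discrepancy between the manifolds lies in small Margulis tubes about the core geodesics of the filling solid tori, together with shrinking cusp neighborhoods of the unfilled cusps, whose total volume tends to zero by the thick-thin decomposition. Continuity of volume under this controlled geometric convergence then yields the limit, with an explicit formula available via the Schl\"afli variation formula if one wants quantitative control.

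The monotonicity $\Vol(N(\beta^n)) \leq \Vol(N)$ together with its equality clause is the most substantial step, and I would attack it using Gromov's simplicial volume. There is a natural degree-one pinch map $N(\beta^n) \to N$ that collapses each filling solid torus onto the core of its original drilled meridian; functoriality of the Gromov norm under proper degree-one maps gives $\|N(\beta^n)\| \leq \|N\|$. Combining with the Gromov--Thurston identity $\Vol = v_3 \cdot \|\cdot\|$ for finite-volume hyperbolic $3$-manifolds (where $v_3$ is the volume of the regular ideal tetrahedron; the identity extends to manifolds with totally geodesic boundary via doubling, since $\|DM\| = 2\|M\|$ and $\Vol(DM) = 2\Vol(M)$) converts the Gromov norm inequality into the desired volume inequality. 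The hard part will be the strict equality clause: here I would appeal to the Gromov--Thurston rigidity principle that equality forces every top-dimensional simplex in an efficient straightened fundamental cycle of $N(\beta^n)$ to be a regular ideal tetrahedron in the pullback metric, which pushes the collapsed solid tori to have geometric volume zero and therefore forces $\beta_i^n = \infty$ for every $i$. The main technical obstacle throughout is harmonizing the deformation theory with the totally geodesic boundary, which is precisely why the doubling reduction is strategically essential.
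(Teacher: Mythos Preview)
The paper does not prove this statement at all: it is Thurston's classical Hyperbolic Dehn Surgery Theorem, quoted without proof in Section~5.4 as a black-box input to the upper-bound argument. So there is no ``paper's own proof'' to compare your proposal against.

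Judged on its own merits, your outline is essentially the standard route (doubling to kill the boundary, Thurston's deformation theory on the character variety for existence, geometric convergence for the volume limit, and Gromov--Thurston simplicial volume for monotonicity and strictness), and it is broadly sound. There is, however, a genuine slip in the monotonicity step: the degree-one pinch map runs the other way. One builds a proper degree-one map $N \to N(\beta^n)$ by sending a horospherical cusp neighborhood $T^2 \times [0,\infty)$ onto the filling solid torus (the slope $\beta_i^n$ bounds the meridian disk, so the map extends). Functoriality of the Gromov norm then gives $\|N(\beta^n)\| \le \|N\|$, which combined with $\Vol = v_3\,\|\cdot\|$ yields $\Vol(N(\beta^n)) \le \Vol(N)$. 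A degree-one map $N(\beta^n) \to N$ as you wrote would give the inequality in the wrong direction; indeed no such map exists in general, since drilling \emph{raises} simplicial volume. Your stated inequality is correct, but the map you describe does not produce it, and the phrase ``collapses each filling solid torus onto the core of its original drilled meridian'' does not describe a map landing in $N$ (the core is exactly what is absent from $N$). Once you reverse the arrow and rewrite the description of the pinch, the rest of the argument goes through.
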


\subsection{Putting the pieces together}

Finally, we are ready to give a sketch of the proof of the upper bound, which we will do in two steps.

\begin{proof}[Step 1:]
    Show that $\Vol(\overline M_f - P_{\Omega}) \leq V_{oct} \tau(f, \Omega)$. 

    \medskip

    Fix $\Omega \subset \mathcal P(S)$ and let $\varepsilon > 0$. Pass to a power $N > 0$ with $P = P_0, \ldots, P_n = f^{-N}(P)$ so that $|\tau(f, \Omega) - \frac{n_T + 2n_S}{N}| < \varepsilon,$ where $n = n_T + n_S$. Note that $P = P_0, \ldots, P_n = f^{-N}(P)$ is built from $P = P_0', \ldots, P_k' = f^{-1}(P)$ and its images under $f^{-1}, \ldots, f^{-N+1}$.

    Thus, we obtain a pants decomposition $P_{\Omega}^N$ of $\partial \overline M_{f^N}$ where $P_{\Omega}^N$ is a lift of the pants decomposition $P_{\Omega}$ of $\partial \overline M_f$. So $\Vol(\overline M_{f^N} - P_{\Omega}^N) = N \cdot \Vol(\overline M_f - P_{\Omega})$ and we are only left to prove the following claim. 

    \begin{claim}
        $\Vol(\overline M_{f^N} - P_{\Omega}^N) \leq V_{oct}(n_T + 2n_S)$.
    \end{claim}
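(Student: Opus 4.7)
The plan is to use the block-decomposition of $\widehat M_{f^N}$ as a bridge: its hyperbolic volume is known exactly by Agol's computation, and $\overline M_{f^N} - P_\Omega^N$ will sit inside this picture as a Dehn filling of $\widehat M_{f^N} - \widehat L$. The desired inequality is then precisely the monotonicity of hyperbolic volume under Dehn filling.

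First I would record the topological identification. Applying the block-decomposition construction to the lifted path $P = P_0, \ldots, P_n = f^{-N}(P)$ in $\Omega \subset \mathcal P(S)$, I get a quotient $h_N \colon \overline M_{f^N} \to \widehat M_{f^N}$ with link $\widehat L^N$, satisfying the four properties analogous to those listed for $h$. Item 2 of that construction says $(\widehat M_{f^N}, \widehat L^N)$ admits a block decomposition with exactly $n_T$ blocks of type $\mathcal B^T$ and $n_S$ blocks of type $\mathcal B^S$, so by the corollary to \Cref{p:agol-blocks} we already know $\widehat M_{f^N} - \widehat L^N$ has a convex hyperbolic metric with totally geodesic thrice-punctured sphere boundary and
\[
\Vol(\widehat M_{f^N} - \widehat L^N) = V_{oct}(n_T + 2 n_S).
\]
The decomposition of $\overline M_{f^N}$ into singletons in the yellow regions and into compact flow arcs of $\Psi_s$ elsewhere is cellular and the quotient is a $3$-manifold (this is the content of Section~5 of \cite{EndPeriodic1}), so \Cref{thm:Armentrout} (Armentrout) upgrades $h_N$ to a map homotopic to a homeomorphism $\overline M_{f^N} \cong \widehat M_{f^N}$. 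Under this homeomorphism, item 3 identifies $P_\Omega^N \subset \partial \overline M_{f^N}$ with $\widehat L^N \cap \partial \widehat M_{f^N}$, and in particular identifies $\overline M_{f^N} - P_\Omega^N$ with $\widehat M_{f^N} - (\widehat L^N \cap \partial \widehat M_{f^N})$ as topological $3$-manifolds.

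Next I would set up the Dehn filling picture. Write $\widehat L^N_{\mathrm{int}} = \widehat L^N \cap \intt(\widehat M_{f^N})$ for the interior components of the link. Then
\[
\widehat M_{f^N} - \widehat L^N \;=\; \bigl(\widehat M_{f^N} - (\widehat L^N \cap \partial \widehat M_{f^N})\bigr) - \widehat L^N_{\mathrm{int}},
\]
so under the identification above, $\widehat M_{f^N} - \widehat L^N$ is exactly the result of drilling the interior link $\widehat L^N_{\mathrm{int}}$ out of $\overline M_{f^N} - P_\Omega^N$. Put otherwise, $\overline M_{f^N} - P_\Omega^N$ is a Dehn filling of $\widehat M_{f^N} - \widehat L^N$ along the torus cusps created by $\widehat L^N_{\mathrm{int}}$, with (nontrivial) filling slopes determined by the meridians of $\widehat L^N_{\mathrm{int}}$ inside $\overline M_{f^N}$. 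Both manifolds are hyperbolic: the drilled manifold by the block corollary, and the filled manifold by acylindricity of $\overline M_{f^N}$ (from strong irreducibility of $f^N$, which is inherited from $f$) combined with \Cref{t:end-periodic-mapping-tori}(ii).

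Finally I would invoke Thurston's hyperbolic Dehn surgery theorem to compare volumes. Since the filling coefficients along $\widehat L^N_{\mathrm{int}}$ are finite, the theorem gives
\[
\Vol(\overline M_{f^N} - P_\Omega^N) \;\leq\; \Vol(\widehat M_{f^N} - \widehat L^N) \;=\; V_{oct}(n_T + 2 n_S),
\]
proving the claim. The main technical obstacle is that the form of the Dehn surgery theorem quoted in the paper produces its volume inequality only along sequences of filling coefficients tending to infinity, whereas we need the bound for one fixed, not-necessarily-large choice of slopes. Accommodating this is the heart of the argument: it requires either appealing to the general volume monotonicity for hyperbolic Dehn filling (a consequence of Gromov's proportionality of volume and simplicial volume, or of the Besson--Courtois--Gallot/Agol--Storm--Thurston drilling inequality), or approximating our specific filling by a sequence of fillings whose slopes tend to infinity on a cover and then descending. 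Everything else is bookkeeping around Armentrout's theorem and the block corollary, but this last step is where one genuinely needs hyperbolic geometry.
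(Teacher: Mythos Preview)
Your proposal is correct and follows essentially the same route as the paper: compute the volume of $\widehat M_{f^N} - \widehat L$ from the block decomposition, identify $\overline M_{f^N} - P_\Omega^N$ as a Dehn filling of it, and apply volume monotonicity under Dehn filling. The paper's own proof is a three-line sketch of exactly this argument; your version supplies the bookkeeping (Armentrout's theorem, separating the interior and boundary components of $\widehat L$) that the paper leaves implicit. Your final paragraph even flags a genuine subtlety the paper glosses over---namely that the form of Thurston's Dehn surgery theorem quoted in the text literally only yields the volume inequality along sequences of slopes tending to infinity, whereas here one needs it for a single fixed filling---and you correctly point to the standard remedies (simplicial volume or Agol--Storm--Thurston).
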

 
    Assuming the claim, we have that $\Vol(\overline M_f - P_{\Omega}) = \frac{\Vol(\overline M_{f^N} - P_{\Omega}^N)}{N} \leq V_{oct} \frac{n_T + 2n_S}{N} \leq V_{oct}(\tau(f, \Omega) + \varepsilon)$. Thus, letting $\varepsilon \to 0$ we are done (modulo the proof of the claim) with Step 1!
\end{proof} 

\begin{proof}[Proof of Claim:] Recall that we have a pants decomposition of $(\widehat M_{f^N}, \widehat L)$ and thus, $\Vol(\widehat M_{f^N} - \widehat L) = V_{oct}(n_T + 2n_S)$. By performing Dehn filling on $\widehat M_{f^N} - \widehat L$, we obtain $\overline M_{f^N} - P_{\Omega}^N$. Thus, by Thurston's Dehn Surgery Theorem $\Vol(\overline M_{f^N} - P_{\Omega}^N) \leq \Vol(\widehat M_{f^N} - \widehat L) = V_{oct}(n_T + 2n_S)$, as desired.  
\end{proof}

\begin{proof}[Step 2:]
    Show that $\Vol(\overline M_f - P_{\Omega}) \leq V_{oct} \cdot \tau(f, \Omega)$ implies that $\Vol(\overline M_f) \leq V_{oct} \cdot \tau(f)$.

    \medskip

    Given $\varepsilon > 0$, suppose $\Omega \subset \mathcal P(S)$ such that $|\tau(f) - \tau(f, \Omega)| < \varepsilon$ and let $N> 0$ such that $f^N$ preserves $\Omega$. Then $\frac{\tau(f^N, \Omega)}{N} = \tau(f, \Omega) \leq \tau(f) - \varepsilon$. 

    By Step 1, $\Vol(\overline M_{f^N}) \leq \Vol(\overline M_{f^N} - P_{\Omega}) \leq V_{oct} \cdot \tau(f^N, \Omega).$ Thus, $\Vol(\overline M_f) = \frac{\Vol(\overline M_{f^N})}{N} \leq \frac{V_{oct}\cdot \tau(f^N, \Omega)}{N} \leq V_{oct} \cdot \tau(f) + \varepsilon$. Letting $\varepsilon \to 0$ we are done.
\end{proof} 

\section{What's next?}

We have made some solid progress on \Cref{q:taut-geometry}, which we recall below for the reader's convenience, but there is lots more to do! Since this paper aims to be expository rather than a broad survey, we will only mention a handful of possible directions here. We encourage the interested reader to explore further. 

\TautGeometry*

For example, in recent work \cite{Whitfield}, Whitfield generalizes the following result of Minsky \cite[Theorem~B]{Minsky.2000} to the setting of end-periodic mapping tori.

\begin{theorem}[Minsky]
    Given a surface $S$, $\varepsilon > 0$, and $L > 0$, there exists $K$ so that, if $\rho: \pi_1(S) \to \textrm{PSL}_2(\mathbb C)$ is a Kleinian surface group and $Y$ is a proper essential subsurface of $S$, then \[\diam_Y(C_0(\rho, L)) > K \Rightarrow \ell_{\rho}(\partial Y) \leq \varepsilon,\] where $C_0(\rho, L)$ is the set of homotopy classes of simple closed curves $\alpha$ such that $\ell_{\rho}(\alpha) \leq L.$
\end{theorem}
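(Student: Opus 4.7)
The plan is to argue the contrapositive: assume $\ell_\rho(\partial Y) > \varepsilon$ and produce a uniform bound $K = K(S, \varepsilon, L)$ on the diameter of $\pi_Y(C_0(\rho, L))$ in the curve complex $\mathcal{C}(Y)$. The guiding intuition, paralleling the bounded-curves-control-geometry theme of \Cref{sec:lower-bound}, is that a $\rho$-long $\partial Y$ forces the part of the Kleinian $3$-manifold $N_\rho$ that sees $Y$ to have uniformly bounded intrinsic geometry, so any $\rho$-short curve crossing $Y$ must project near a canonical pants decomposition of $Y$.

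For each $\alpha \in C_0(\rho, L)$ whose $\pi_Y(\alpha)$ is nonempty, I would first build a pleated surface $g_\alpha : (S, \sigma_\alpha) \to N_\rho$ in the homotopy class of $\rho$ whose pleating locus contains $\alpha \cup \partial Y$, via the usual spinning construction indicated in \Cref{fig:Spinning}. Because pleating-locus leaves are realized as geodesics, the induced hyperbolic metric satisfies $\ell_{\sigma_\alpha}(\alpha) \leq L$ and $\ell_{\sigma_\alpha}(\partial Y) = \ell_\rho(\partial Y) > \varepsilon$. Gauss--Bonnet gives $\Area_{\sigma_\alpha}(Y) = 2\pi|\chi(Y)|$, and combined with the collar lemma the $\varepsilon$-lower bound on $\ell_{\sigma_\alpha}(\partial Y)$ prevents the boundary collars from absorbing all of this area. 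Hence the Margulis-thick part of $(Y, \sigma_\alpha)$ has diameter bounded by some $D = D(S, \varepsilon)$, and Bers' lemma produces a pants decomposition $Q_\alpha$ of $Y$ of $\sigma_\alpha$-length at most a universal constant $B = B(S)$. Since $\ell_{\sigma_\alpha}(\alpha) \leq L$, the intersection number $i(\alpha, Q_\alpha)$ is then bounded by a constant depending only on $(S, \varepsilon, L)$, and standard projection estimates of Masur--Minsky give $d_{\mathcal{C}(Y)}(\pi_Y(\alpha), Q_\alpha) \leq C(S, \varepsilon, L)$.

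To globalize this per-curve estimate, I would use that all the $g_\alpha$ map into the same $N_\rho$ and realize $\partial Y$ as the same $\rho$-geodesic. Interpolating between any two $g_\alpha, g_\beta$ through pleated surfaces with $\partial Y$ always in the pleating locus yields a continuous family of bounded-geometry metrics on $Y$ whose associated bounded-length pants decompositions $Q_t$ move only a bounded $\mathcal{C}(Y)$-distance per elementary move, with the total number of moves controlled by $\chi(S)$ and $\varepsilon$. Combined with the previous paragraph this gives $d_{\mathcal{C}(Y)}(\pi_Y(\alpha), \pi_Y(\beta)) \leq K$ uniformly.

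The main obstacle is the interpolation step: one must rule out that as $\alpha$ varies the pleated surfaces $g_\alpha$ wrap many times around a Margulis tube at $\partial Y$, which would blow up the projection $\pi_Y$ in its annular incarnation (when $Y$ is the annular neighborhood of $\partial Y$). This is precisely where $\ell_\rho(\partial Y) > \varepsilon$ is essential, since a shrinking $\partial Y$ would allow unbounded Dehn twists along it while keeping $\rho$-length of nearby curves constant. The technical heart of the argument, and where I expect the most work, is to quantify annular subsurface projections in terms of the horospherical geometry of the $\varepsilon$-Margulis tube around $\partial Y$, in the style of the Masur--Minsky projection estimates; this is the step that genuinely consumes both hypotheses $\ell_\rho(\partial Y) > \varepsilon$ and $\ell_\rho(\alpha) \leq L$ and yields the explicit dependence $K = K(S, \varepsilon, L)$.
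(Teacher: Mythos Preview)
The paper does not contain a proof of this theorem. It is stated in the final ``What's next?'' section as a result of Minsky \cite[Theorem~B]{Minsky.2000}, quoted only to set the stage for Whitfield's end-periodic analogue; no argument, sketch, or even heuristic for it appears anywhere in the text. There is therefore nothing in the paper to compare your proposal against.

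For what it is worth, your outline is broadly in the spirit of Minsky's actual argument: one does build pleated surfaces realizing $\alpha\cup\partial Y$, use the lower bound $\ell_\rho(\partial Y)>\varepsilon$ to control the geometry of the induced metric on $Y$, and then convert bounded intersection number into bounded curve-complex projection. Your sketch is, however, more elaborate than necessary in one place and underspecified in another. The ``interpolation between $g_\alpha$ and $g_\beta$'' step you describe is not how Minsky globalizes; rather, he shows that for each $\alpha\in C_0(\rho,L)$ the pleated surface image lies in a uniformly bounded region of $N_\rho$ near the geodesic representative of $\partial Y$, so all the induced metrics on $Y$ are uniformly comparable and a single Bers pants decomposition serves as a common basepoint. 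Your annular worry at the end is real but is handled directly by the efficiency-of-pleated-surfaces lemma (bounded-diameter image plus $\ell_\rho(\partial Y)>\varepsilon$ bounds the twisting), not by a separate horospherical computation. None of this, though, is content you could have extracted from the present paper.
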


In particular, they prove the following where $\lambda^-$ and $\lambda^+$ are the \emph{negative} and \emph{positive Handell--Miller laminations} of $S$ associated to the end-periodic homeomorphism $f:S \to S$. For a comprehensive treatment of the construction and structure of these laminations see \cite{CC-book}.

\begin{theorem}[Whitfield]
    For any $D, \varepsilon > 0,$ there exists $K = K(D, \varepsilon)$ such that for any atoroidal end-periodic homeomorphism $f:S \to S$ with $\chi(f) + \xi(f)^2 \leq D,$ there exists $s \in \textrm{AH}(M_f)$ satisfying the following: For any connected, compact subsurface $Y \subset S \subset M_f$, we have \[d_{Y}(\lambda^+, \lambda^-) \geq K \Rightarrow \ell_{s}(\partial Y) \leq \varepsilon.\]
\end{theorem}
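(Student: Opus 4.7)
The plan is to realize $\overline M_f$ as a hyperbolic $3$-manifold, extract from the fibration a Kleinian representation of $\pi_1(S)$, identify its end invariants with the Handel--Miller laminations $\lambda^\pm$, and then reduce to Minsky's theorem applied to a finite-type subsurface whose topological complexity is controlled by the capacity of $f$. Since $f$ is atoroidal (and we may as well assume strongly irreducible, enlarging $D$ if necessary), Theorem~3.1 endows $\overline M_f$ with a convex hyperbolic metric, giving a discrete faithful representation $s \in \textrm{AH}(M_f)$. Pulling back along the infinite cyclic cover $\widetilde M_\infty$ dual to the fibration yields a Kleinian surface-group representation $\varrho : \pi_1(S) \to \PSL_2(\mathbb{C})$ whose translation length on a simple closed curve $\gamma \subset S$ agrees with $\ell_s(\gamma)$.

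Next, I would identify the two end invariants of $\varrho$ with the Handel--Miller laminations. The two ``ends'' of $\widetilde M_\infty$ correspond to $\mathcal U_\pm \times \{\pm\infty\}$, which collapse in $\overline M_f$ onto the boundary surfaces $S_\pm$. Using the well-pleated surface interpolation developed in Section~4, push a bounded-length pants decomposition forward by powers of $f$ and observe that the geodesic realizations of the resulting curves exit every compact set while Hausdorff-converging to $\lambda^+$; this exhibits $\lambda^+$ as the ending lamination of the positive end of $\varrho$, and symmetrically for $\lambda^-$. This step is where the \emph{atoroidal} hypothesis is used: without it, $\varrho$ could have rank-two parabolic subgroups that would spoil the end-invariant dictionary.

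To deduce the short-boundary conclusion from Minsky's Theorem~B, one cannot feed the infinite-type surface $S$ into Minsky's statement directly. Instead, given a compact essential $Y \subset S$, choose a minimal core $Y_0$ of $f$ with $|\chi(Y_0)| = |\chi(f)|$ and a fundamental domain in $\mathcal U_\pm$ for a uniform power $f^K$ (as in the lower-bound argument of Section~4); their union contains $Y$ and has finite topological type bounded in terms of $D$. Restricting $\varrho$ to this finite-type subsurface gives a representation to which Minsky's theorem applies, producing the implication $d_Y(\lambda^+,\lambda^-) \ge K \Rightarrow \ell_\varrho(\partial Y) \le \varepsilon$, which transfers to $\ell_s(\partial Y) \le \varepsilon$. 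The $\chi(f)$ contribution to $D$ controls the minimal core, while the $\xi(f)^2$ term handles subsurfaces whose boundary is isotopic into $\partial \overline M_f$, where the complexity of pants decompositions on $S_\pm$ is counted quadratically in $\xi(f)$.

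The main obstacle, I expect, is the identification of $\lambda^\pm$ with the end invariants of $\varrho$ in a sufficiently uniform way. Specifically, to invoke Minsky's subsurface projection machinery one must show that the purely topological projection distance $d_Y(\lambda^+, \lambda^-)$ agrees, up to error depending only on $D$, with the projection distance of markings arising from the bounded-length curves produced by well-pleated surfaces near the ends of $\widetilde M_\infty$. Once this comparison is in place, Minsky's theorem does the rest, and the uniformity of $K = K(D,\varepsilon)$ reduces to the bounded-topological-type argument above.
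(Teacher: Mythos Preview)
The paper does not contain a proof of this statement. It appears in the concluding ``What's next?'' section as a citation of Whitfield's result from \cite{Whitfield}, stated without proof as an illustration of further progress on \Cref{q:taut-geometry}. There is therefore no in-paper argument against which to compare your proposal; for the actual proof you would need to consult \cite{Whitfield} directly.

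That said, a few remarks on your outline. The overall shape---realize $\overline M_f$ hyperbolically, extract a Kleinian representation of $\pi_1(S)$, identify the end data with $\lambda^\pm$, and reduce to a finite-type statement controlled by the capacity---is a reasonable heuristic. But several steps as written are not sound. First, you cannot pass from ``atoroidal'' to ``strongly irreducible'' by ``enlarging $D$ if necessary'': strong irreducibility is a genuinely stronger topological hypothesis on $f$, not something that can be purchased by loosening a numerical bound on $\chi(f)+\xi(f)^2$. Second, restricting $\varrho$ to a finite-type subsurface does not in general yield a Kleinian \emph{surface} group of the type to which Minsky's Theorem~B applies; the restriction will typically be geometrically infinite in a way that is not covered by the hypotheses of that theorem, so the reduction needs a different mechanism. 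Third, and as you yourself flag, the identification of $\lambda^\pm$ with the end invariants of $\varrho$---uniformly in $D$---is the substantive content of the result, not a preliminary step; your sketch via Hausdorff limits of pushed pants decompositions gestures at the right objects but does not establish the comparison of subsurface projections that you need.
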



In a similar spirit, we could hope to produce uniform biLipshitz models for end-periodic mapping tori. Such model manifolds were key in the proof of the Ending Lamination Conjecture \cite{ELC1, ELC2}. Furthermore, such models would be a significant step, together with forthcoming work of \cite{BKM}, towards building models for arbitrary closed $3$-manifolds admitting taut depth-one foliations using the combinatorics of the foliations. However, a more approachable goal would be to simply give volume bounds on arbitrary closed $3$-manifolds admitting taut depth-one foliations using the work of \cite{EndPeriodic1}, \cite{EndPeriodic2}, and \cite{BKM}.

We could also approach \Cref{q:taut-geometry} from a completely different perspective, pseudo-Anosov flows! This is the perspective taken in a series of works by Landry, Minsky, and Taylor \cite{LandryMinskyTaylor2021, LandryMinskyTaylor2023}. They show that any end-periodic map can be obtained from a fibration using a technique called ``spinning" (this is different than the spinning illustrated in \Cref{fig:Spinning} although very similar in spirit). The beauty of their work is that they are able to produce a standard representative (a so-called \emph{spun pseudo-Anosov}) of any (atoroidal) end-periodic homeomorphism that is directly related to a pseudo-Anosov representative of the return map to a compact fiber. This perspective opens up a whole new possibility of questions and approaches which in itself could constitute an entire paper.

\bibliographystyle{alpha}
  \bibliography{main}
  
\end{document}